\newtheorem{lemma}{Lemma}[section]
\newtheorem{proposition}[lemma]{Proposition}
\newtheorem{remark}[lemma]{Remark}
\newtheorem{theorem}[lemma]{Theorem}
\newtheorem{definition}[lemma]{Definition}
\newtheorem*{mydef1}{Definition I}
\newtheorem*{mydef2}{Definition II}
\newtheorem{corollary}[lemma]{Corollary}
\newtheorem*{remark*}{Remark}
\newcommand{\Id}{{{\mathchoice {\rm 1\mskip-4mu l} {\rm 1\mskip-4mu l}
      {\rm 1\mskip-4.5mu l} {\rm 1\mskip-5mu l}}}}
\makeatletter \@addtoreset {equation}{section}
\renewcommand\theequation
\z@ \arabic{section}.\arabic{subsection}.\arabic{equation}
  \else \arabic{section}.\arabic{equation} \fi}
\newcommand{\LL}{{\mathcal L}}
\newenvironment{packed_enum}{
\begin{enumerate}
  \setlength{\itemsep}{2pt}
  \setlength{\parskip}{5pt}
  \setlength{\parsep}{10pt}
}{\end{enumerate}}
\begin{document}


\title{On the Uniqueness of Hofer's Geometry}

\author{Lev Buhovsky, \ Yaron Ostrover}
\date{\today}
\maketitle
\begin{abstract}
We study the class of norms on the space of smooth functions on a
closed symplectic manifold, which are invariant under the action of
the group of Hamiltonian diffeomorphisms. Our main result shows that
any such norm that is continuous with respect to the
$C^{\infty}$-topology, is dominated from above by the
$L_{\infty}$-norm. As a corollary, we obtain that any bi-invariant
Finsler pseudo-metric on the group of Hamiltonian diffeomorphisms
that is generated by an invariant norm that satisfies the
aforementioned continuity assumption, is either identically zero or
equivalent to Hofer's metric.
 \end{abstract}

\section{Introduction and Main Results}

A remarkable fact, which is among the cornerstones of symplectic
rigidity theory, is that the group of Hamiltonian diffeomorphisms of
a symplectic manifold can be equipped with an intrinsic geometry
given by a bi-invariant Finsler metric known as Hofer's metric. In
contrast with finite-dimensional Lie groups, the existence of such a
metric on an infinite-dimensional group of transformations is highly
unusual due to the lack of compactness. In the past twenty years,
Hofer's metric has been intensively studied with many new
discoveries covering a wide range of aspects in Hamiltonian dynamics
and symplectic geometry.

The purpose of this note is to show that under some mild 
assumption, Hofer's metric is, in a sense, the only bi-invariant
Finsler metric on the group of Hamiltonian diffeomorphisms of closed
symplectic manifolds. In order to state this result precisely we
proceed with some standard definitions and notations, and refer the
reader to the books~\cite{HZ,MS,P1} for symplectic preliminaries,
and further discussions on the group of Hamiltonian diffeomorphisms
and Hofer's geometry.

Let $(M,\omega)$ be a closed $2n$-dimensional symplectic manifold,
and denote by $C^{\infty}_0(M)$ the space of smooth functions that
are zero-mean normalized with respect to the canonical volume form
$\omega^n$. For every smooth time-dependent Hamiltonian function $H
: M \times [0,1] \rightarrow {\mathbb
R}$, 
we associate a  vector field $X_{H_t}$ 
via the equation $i_{X_{H_t}} \omega = - dH_t$, where $ H_t(x) =
H(t,x)$. The flow
of  $X_{H_t}$ 
is denoted by $\phi_H^t$ and is defined for all $t \in [0,1]$. The
main object of this note is the group of Hamiltonian
diffeomorphisms, which consists of all the time-one maps of such
Hamiltonian flows i.e.,
$$ {\rm Ham}(M,\omega) = \{ \phi_H^1 \ | \ {\rm where} \ \phi_H^t \
{\rm is \ a \ Hamiltonian \ flow  } \}$$ When equipped with the
standard $C^{\infty}$-topology, the group ${\rm Ham}(M,\omega)$ 
is an infinite-dimensional Fr\'echet Lie group, whose Lie algebra
${\cal A}$ can be identified with the space $C^{\infty}_0(M)$.
Moreover, the adjoint action of Ham$(M,\omega)$ on ${\cal A}$ is the
standard action of diffeomorphisms on functions i.e., $Ad_\phi f = f
\circ \phi^{-1}$, for every $f \in {\cal A}$ and $\phi \in$
Ham$(M,\omega)$. Next, we define a Finsler (pseudo) distance on
Ham$(M,\omega)$. Given any norm $\| \cdot \|$ on the Lie algebra
${\cal A}$, we define the length of a path $\alpha : [0,1]
\rightarrow {\rm Ham}(M,\omega)$ as
$$ {\rm length}\{ {\alpha}\} = \int_0^1 \| \dot \alpha \| dt =
\int_0^1 \| H_t \| dt ,$$ where $H_t(x)=H(t,x)$ is the unique
normalized Hamiltonian function generating the path $\alpha$. Here
$H$ is said to be normalized if $\int_M H_t \omega^n=0$ for every
$t\in [0,1]$. The distance between two Hamiltonian diffeomorphisms
is given by $$ d(\psi,\varphi) := \inf {\rm length} { \{ \alpha \}
},$$ where the infimum is taken over all Hamiltonian paths $\alpha$
connecting $\psi$ and $\varphi$. It is not hard to check that $d$ is
non-negative, symmetric and satisfies the triangle inequality.
Moreover, a norm on ${\cal A}$ which is invariant under the adjoint
action yields a bi-invariant pseudo-distance function, i.e.
 $d(\psi,\phi) = d(\theta \, \psi,\theta \, \phi) = d(\psi \, \theta ,\phi \, \theta)$
 for every  $\psi,  \phi,  \theta \in {\rm Ham} (M,\omega)$.
From now on we will deal solely with such norms\footnote{We remark that a fruitful study of right-invariant Finsler
metrics on Ham$(M,\omega)$, motivated in part by applications to hydrodynamics, was initiated in a well known paper
by Arnold~\cite{Ar} (see also~\cite{AK},~\cite{KW} and the references within). Moreover, non-Finslerian bi-invariant
metrics on Ham$(M,\omega)$ have been intensively studied in the realm of symplectic geometry, starting with the works of
Viterbo~\cite{Vit}, Schwarz~\cite{Sch}, and Oh~\cite{Oh}, and followed by many others.} and we will refer to $d$ as
the pseudo-distance generated by the norm $\| \cdot \|$.

\begin{remark} \label{Rmk-about-continuity} {\rm
When one studies the geometric properties of the group of Hamiltonian diffeomorphisms, it is convenient to consider smooth paths $ [0,1] \rightarrow {\rm Ham}(M,\omega) $, among which, those that start at the identity correspond to smooth Hamiltonian flows. Moreover, for a given Finsler metric on $ {\rm Ham}(M,\omega) $, a natural assumption from a geometric point of view is that every smooth path $ [0,1] \rightarrow {\rm Ham}(M,\omega) $ is of a finite length. As it turns out, the latter assumption is equivalent to the continuity of the norm on ${\cal A}$ corresponding to the Finsler metric in the $ C^{\infty} $-topology\footnote{We thank A. Katok for his illuminating remark regarding
the naturalness of the assumption that the norm is continuous in the $ C^\infty $-topology.}.
We prove this fact in the Appendix to the paper. Throughout the text we shall consider only such norms.

}
\end{remark}

It is highly non-trivial to check whether a distance function generated by such a norm,
is non-degenerate, that is $d({\mathbb \Id},\phi)
> 0$ for $\phi \neq {\mathbb \Id}$. In fact, for closed
symplectic manifolds, a bi-invariant pseudo-metric $d$ on
Ham$(M,\omega)$ is either a genuine metric or identically zero. This
is an immediate corollary of a well known theorem by
Banyaga~\cite{B}, which states that Ham$(M,\omega)$ is a simple
group, combined with the fact that the null-set $${\rm null}(d) = \{
\phi \in {\rm Ham}(M,\omega) \ | \ d(\Id,\phi) = 0 \}$$ is a normal
subgroup of Ham$(M,\omega)$. A distinguished result by
Hofer~\cite{H} states that the $L_{\infty}$-norm on ${\cal A}$ gives
rise to a genuine distance function on Ham$(M,\omega)$ known as
Hofer's metric. This was discovered and proved by Hofer for the case
of ${\mathbb R}^{2n}$, then generalized by Polterovich~\cite{P}, and
finally proven in full generality by Lalonde and McDuff~\cite{LM}.
In a sharp contrast to the above, Eliashberg and
Polterovich~\cite{EP} showed that for $1 \leq p < \infty$,  the
pseudo-distances on ${\rm Ham}(M,\omega)$ corresponding to the
$L_p$-norms on ${\cal A}$ vanishes identically. A considerable
generalization of the latter result was given by
Ostrover-Wagner~\cite{OW} who proved that for a closed symplectic
manifold:
\begin{theorem}[Ostrover-Wagner~\cite{OW}] \label{OW-theorem}
Let $ \| \cdot \|$ be a Ham$(M,\omega)$-invariant norm on ${\cal A}$
such that $\| \cdot \| \leq C\| \cdot \|_{\infty}$ for some constant
$C$, but the two norms are not equivalent. Then the associated
pseudo-distance $d$ on Ham$(M,\omega)$ vanishes identically.
\end{theorem}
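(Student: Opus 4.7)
My plan is to proceed in two stages: a reduction via Banyaga's simplicity theorem, and then a direct construction exploiting the non-equivalence of the two norms.

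For the reduction, since $\|\cdot\|$ is adjoint-invariant, the induced pseudo-distance $d$ is bi-invariant, so the null-set ${\rm null}(d) = \{\phi \in {\rm Ham}(M,\omega) : d(\Id,\phi)=0\}$ is a normal subgroup of ${\rm Ham}(M,\omega)$. By Banyaga's simplicity theorem, ${\rm null}(d)$ is either trivial or all of ${\rm Ham}(M,\omega)$, so the task reduces to exhibiting a single non-identity element reachable from $\Id$ by Hamiltonian paths of arbitrarily small $\|\cdot\|$-length.

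The main construction uses the non-equivalence hypothesis, which supplies a sequence $\{f_k\} \subset \mathcal{A}$ with $\|f_k\|_\infty \leq 1$ and $\|f_k\|\to 0$. A naive first observation is that $d(\Id,\phi_{f_k}^1) \leq \|f_k\| \to 0$, but this alone is insufficient because the endpoints $\phi_{f_k}^1$ might converge to the identity. Combined with the Ham-invariance of the norm, however, the sequence yields an abundant family: for every $k$ and every $\psi \in {\rm Ham}(M,\omega)$, the Hamiltonian $f_k \circ \psi^{-1}$ has the same small norm $\|f_k\|$. Fragmentation then decomposes any target $\phi = \phi_H^1$ into finitely many pieces $\phi_1, \ldots, \phi_N$ supported in displaceable balls $U_1, \ldots, U_N$. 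For each piece, a commutator/displacement construction in the spirit of Eliashberg and Polterovich should replace the subpath generating $\phi_i$ by a many-step concatenation alternating flows of conjugates of a suitable $f_{k(i)}$ with short flows of a fixed displacer $G_i$ of $U_i$. The invariance makes each conjugated $f_{k(i)}$ step contribute the vanishing quantity $\|f_{k(i)}\|$, while the hypothesis $\|\cdot\| \leq C\|\cdot\|_\infty$ controls the length contributed by the auxiliary $G_i$-steps independently of $k$.

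The main obstacle is designing the concatenation so that its endpoint is genuinely $\phi$ and its total length can be driven to zero. One must arrange for the fixed $L_\infty$-dominated contributions of the displacers to appear only a bounded number of times, while the conjugated-$f_k$ steps appear abundantly, so that taking $k(i)$ large enough forces the total length below any prescribed $\varepsilon$. Verifying this structural decomposition, and checking that no bookkeeping error accumulates in the endpoint, is the technical heart of the argument; the assumption $\|\cdot\| \leq C\|\cdot\|_\infty$ for the auxiliary Hamiltonians plays a crucial role in absorbing the fixed, non-$f_k$-related costs.
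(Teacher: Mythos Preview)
The paper does not contain a proof of this statement: Theorem~\ref{OW-theorem} is quoted as a known result from~\cite{OW} and is used only as input to the corollary following Theorem~\ref{Main-thm}. There is therefore no ``paper's own proof'' to compare against.

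As for your proposal on its own merits: the reduction via Banyaga's simplicity theorem and the extraction of a sequence $\{f_k\}$ with $\|f_k\|_\infty \leq 1$ and $\|f_k\|\to 0$ are correct and are indeed the starting point of the Ostrover--Wagner argument. However, your write-up is a strategy rather than a proof. The decisive step --- producing, for a \emph{fixed} nontrivial $\phi$, Hamiltonian paths from $\Id$ to $\phi$ whose $\|\cdot\|$-length tends to zero --- is only gestured at (``a commutator/displacement construction \ldots\ should replace the subpath''), and you explicitly flag it as the unresolved technical heart. In particular, it is not clear from your outline how conjugates of the flow of $f_k$, which are conjugates of a single diffeomorphism depending on $k$, are assembled to hit a prescribed target $\phi_i$ while keeping the number of $L_\infty$-controlled auxiliary steps bounded independently of $k$. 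That bookkeeping is exactly where the argument lives, and until it is written out your proposal remains a plan rather than a proof.
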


In~\cite{EP}, the authors started a discussion regarding the uniqueness of Hofer's metric (cf.~\cite{Eli},~\cite{P1}). For the case of closed symplectic manifolds, one question they arose is:

\noindent {\bf Question:} Does there exist a Finsler bi-invariant metric on
Ham$(M,\omega)$ which is not equivalent to Hofer's metric.

In this paper we provide an answer to the above question 
under the natural continuity assumption mentioned in Remark~\ref{Rmk-about-continuity}.
More precisely, our main result is:

\begin{theorem}  \label{Main-thm} Let $(M,\omega)$ be a closed symplectic manifold.
Any Ham$(M,\omega)$-invariant pseudo norm $\| \cdot \|$ on
${\mathcal A}$ that is continuous in the $C^{\infty}$-topology, is
dominated from above by the $L_{\infty}$-norm i.e., $\| \cdot \|
\leq C \| \cdot \|_{\infty}$ for some constant $C$.
\end{theorem}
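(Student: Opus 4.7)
The plan is to use the $C^\infty$-continuity to obtain an initial bound in terms of a $C^k$-seminorm, and then to cash in Hamiltonian invariance to promote that bound to an $L_\infty$-bound. Since $\|\cdot\|$ is a continuous seminorm on the Fr\'echet space $\mathcal A$, whose topology is generated by the seminorms $\|\cdot\|_{C^k}$, $k=0,1,2,\dots$, there exist an integer $k\ge 0$ and a constant $C_0>0$ with
\[
\|f\|\le C_0\,\|f\|_{C^k}\qquad\text{for every }f\in\mathcal A.
\]
Combined with Hamiltonian invariance this immediately upgrades to $\|f\|\le C_0\,\|f\circ\phi\|_{C^k}$ for every $\phi\in\mathrm{Ham}(M,\omega)$; more generally, whenever $f=\sum_{i=1}^{N}g_i\circ\phi_i$ with $g_i\in\mathcal A$ and $\phi_i\in\mathrm{Ham}(M,\omega)$, one has $\|f\|\le C_0\sum_i\|g_i\|_{C^k}$.

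The theorem thus reduces to the following geometric \emph{decomposition lemma}: there exist constants $N$ and $C_1$ depending only on $(M,\omega)$ and $k$ such that every $f\in\mathcal A$ with $\|f\|_\infty\le 1$ admits a decomposition
\[
f=\sum_{i=1}^{N}g_i\circ\phi_i,\qquad \phi_i\in\mathrm{Ham}(M,\omega),\ g_i\in\mathcal A,\ \|g_i\|_{C^k}\le C_1.
\]
For then $\|f\|\le N\,C_0\,C_1$, which is the required bound.

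To prove the decomposition lemma I would first fragment $f$. Cover $M$ by finitely many Darboux balls with a subordinate partition of unity $\{\rho_j\}$, and let $\rho_0$ be a fixed bump of unit mean. The summands $\rho_j f-c_j\rho_0$, with $c_j$ chosen as the mean of $\rho_j f$, all lie in $\mathcal A$ and have uniformly bounded $L_\infty$-norm, reducing the problem to functions $h\in\mathcal A$ with $\|h\|_\infty\le 1$ supported in a fixed Darboux ball $B_0$. One then tries to \emph{symplectically flatten} such $h$ using Hamiltonian diffeomorphisms of $M$ supported in a slightly larger ball: away from $\mathrm{Crit}(h)$ one applies a flow-box normal form adapted to the Hamiltonian flow generated by $h$, realizing $h$ as a coordinate function whose $C^k$-norm is controlled by $\|h\|_\infty$; near the critical set one uses a symplectic Morse-type model. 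Extending these local symplectomorphisms to genuine elements of $\mathrm{Ham}(M,\omega)$ produces the decomposition.

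The main obstacle is to make this normalization \emph{uniform} in $h$, since the bound $\|h\|_\infty\le 1$ offers no control on derivatives, critical points, or the topology of level sets of $h$. A naive patching of flow-box and Morse models yields a number of pieces that grows with the oscillation of $h$, so a genuinely global symplectic construction is needed---for instance a dyadic decomposition of $h$ according to its oscillation scale together with area-preserving rescalings that trade high-frequency oscillation in one direction for small amplitude over a larger region. The core point is that the relevant quantity is the $C^k$-norm \emph{up to Hamiltonian conjugation}, which can be dramatically smaller than $\|h\|_{C^k}$ itself. Once the decomposition lemma is established with uniform constants $N$ and $C_1$, the theorem follows at once from the chain of inequalities outlined above.
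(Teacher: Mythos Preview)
Your reduction is correct and matches the paper's overall architecture: the paper, too, begins by observing that any $C^\infty$-continuous Ham-invariant seminorm is dominated by a Ham-invariant quantity built from pullbacks of a few fixed functions (their $\|\cdot\|_{\mathcal F,\max}$; see Lemma~\ref{lemma-about-max-norm}), and then reduces everything to a statement of the flavour of your decomposition lemma. So the skeleton of your argument is sound.

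The genuine gap is that you have not proved the decomposition lemma, and the methods you sketch do not obviously lead there. You correctly identify the obstacle---uniformity in $h$ with only an $L_\infty$ bound---but ``flow-box away from the critical set plus Morse model near it'' is exactly the kind of patching whose number of pieces and $C^k$-size depend on the critical-point structure of $h$, which you have no control over. The dyadic/rescaling idea you allude to at the end is vague; without a concrete mechanism it is a restatement of the problem rather than a solution.

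The paper's route through this difficulty is rather different from what you outline. In dimension two it first shows (Proposition~\ref{prop-about-decompsition2}) that any $f$ with $\|f\|_\infty\le 1$ can be written \emph{exactly} as a sum of $N_0=528$ Hamiltonian pullbacks of \emph{radial} functions of bounded $L_\infty$-norm (not bounded $C^k$-norm). This step does use a Morse-type argument, but the point is not to control derivatives---it is to reduce to a one-parameter (radial) problem. The radial pieces are then handled by an explicit averaging construction (Proposition~\ref{Proposition-about-averaging}): one builds a single fixed function $F_{a,A}$ and, for each radial $g$, a compactly supported area-preserving map $\Phi$ with $\int_{D_r}\Phi^*F_{a,A}=\int_{D_r}g$ for all $r$; rotational averaging of $\Phi^*F_{a,A}$ then $C^\infty$-approximates $g$ with total coefficient bounded by $\|g\|_\infty$. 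In higher dimensions the paper bootstraps from the $2$-dimensional case via products, a Fourier-series argument to get a $C^{2n+1}$-bound, and then a Taylor-expansion/partition trick to descend from $C^{2n+1}$ to $L_\infty$. None of this resembles a flow-box normalization; the key device is the radial-averaging identity, which is what makes the estimates uniform.

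In short: your framework is right, but the heart of the theorem is precisely the lemma you leave open, and the paper's proof of it relies on constructions (radial reduction, the $F_{a,A}$ averaging trick, the Fourier/Taylor bootstrap) that are not suggested by your sketch.
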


Combining together Theorem~\ref{Main-thm} and
Theorem~\ref{OW-theorem}, we conclude that:

\begin{corollary} For a closed symplectic manifold $(M,\omega)$, any bi-invariant Finsler pseudo metric on Ham$(M,\omega)$,
obtained by a pseudo norm $\| \cdot \|$ on ${\mathcal A}$ that is
continuous in the $C^{\infty}$-topology, is either identically zero
or equivalent\footnote{Here two metrics $d_1,d_2$ are said to be
equivalent if  ${\frac 1 C} \, d_1 \leqslant d_2 \leqslant C d_1$
for some constant $C>0$.} to Hofer's metric. In particular, any non-degenerate bi-invariant Finsler metric on Ham$(M,\omega)$, which is generated by a norm that is continuous in the $C^{\infty}$-topology, gives rise to the same topology on Ham$(M,\omega)$ as the one induced by Hofer's metric.
\end{corollary}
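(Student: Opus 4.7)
The plan is to derive the Corollary directly from Theorem~\ref{Main-thm} together with Theorem~\ref{OW-theorem}, the latter being quoted from \cite{OW}, so essentially no new work is needed beyond combining the two and checking that equivalence at the level of norms transfers to equivalence at the level of induced Finsler pseudo-metrics.

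First I would start with a Ham$(M,\omega)$-invariant pseudo norm $\|\cdot\|$ on $\mathcal{A}$ that is continuous in the $C^\infty$-topology, and invoke Theorem~\ref{Main-thm} to produce a constant $C>0$ such that $\|f\| \leq C\,\|f\|_\infty$ for all $f \in \mathcal{A}$. At this point Theorem~\ref{OW-theorem} gives a strict dichotomy: either $\|\cdot\|$ and $\|\cdot\|_\infty$ are equivalent as norms on $\mathcal{A}$, or else the associated pseudo-distance $d$ on Ham$(M,\omega)$ vanishes identically. In the second case there is nothing more to prove.

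In the first case, I would use the elementary fact that two equivalent invariant norms on $\mathcal{A}$ generate equivalent bi-invariant Finsler pseudo-metrics. Indeed, if $\tfrac{1}{C'}\|\cdot\|_\infty \leq \|\cdot\| \leq C'\|\cdot\|_\infty$, then for any Hamiltonian path $\alpha$ generated by a normalized Hamiltonian $H_t$ one has
\[
\tfrac{1}{C'}\int_0^1 \|H_t\|_\infty\, dt \;\leq\; \int_0^1 \|H_t\|\, dt \;\leq\; C'\int_0^1 \|H_t\|_\infty\, dt,
\]
and passing to the infimum over all such paths connecting two given Hamiltonian diffeomorphisms yields $\tfrac{1}{C'}\, d_{\mathrm{Hofer}} \leq d \leq C'\, d_{\mathrm{Hofer}}$. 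This establishes the equivalence assertion of the Corollary.

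For the last sentence, if in addition the Finsler metric is non-degenerate then the vanishing alternative is excluded, so we are in the equivalence case; since equivalent metrics induce identical topologies, the topology determined by $d$ coincides with the Hofer topology on Ham$(M,\omega)$. There is no real obstacle here --- all the substance lies in Theorem~\ref{Main-thm} itself, whose proof will occupy the rest of the paper --- so the Corollary is essentially a formal consequence once that domination result is in hand.
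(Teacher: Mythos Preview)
Your argument is correct and matches the paper's own treatment: the Corollary is stated immediately after Theorem~\ref{Main-thm} and Theorem~\ref{OW-theorem} with the sole comment that it follows by combining the two, and you have simply spelled out that combination (including the straightforward passage from equivalence of norms to equivalence of the induced Finsler pseudo-metrics). There is nothing to add.
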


\begin{remark}  {\rm Let us emphasize that any norm $\| \cdot \|$ on ${\mathcal A}$
can be turned into a Ham$(M,\omega)$-invariant pseudo-norm via the invariantization procedure $ \| f \| \mapsto \| f
\|_{inv}$, where:
$$ \| f \|_{inv} =  \inf \Bigl \{  \sum \|\phi_i^*  f_i \|  \ ; \ f = \sum f_i, \ {\rm and \ } \phi_i \in Ham(M,\omega)  \Bigr \} $$
Note that $\| \cdot \|_{inv} \leq \| \cdot \|$.  Thus, if $\| \cdot \|$ is continuous in the $C^{\infty}$-topology,
then so is $\| \cdot \|_{inv}$. Moreover if $\| \cdot \|'$ is a Ham$(M,\omega)$-invariant norm, then:
$$ \| \cdot \|' \leq \| \cdot \| \Longrightarrow \| \cdot \|' \leq \| \cdot \|_{inv}$$
In particular, the above invariantization procedure provides a plethora of Ham$(M,\omega)$-invariant genuine norms
on ${\mathcal A}$, e.g., by taking the homogenization of the $\| \cdot \|_{C^k}$-norms. }
\end{remark}

\noindent{\bf Structure of the paper:} In Section~\ref{section-outline} we sketch an outline of the proof of
Theorem~\ref{Main-thm}. In Section~\ref{section-local-version} we prove a local version of this theorem, which would
serve as the main ingredient in the proof of the general case given in Section~\ref{section-proof-of-thm}.

\noindent {\bf Notations:} Let $x_1,\ldots,x_n$ be the Cartesian
coordinates in ${\mathbb R}^n$. For any multi-index $\alpha =
(\alpha_1,\ldots,\alpha_n)$, set $\partial^{\alpha} =
\partial_1^{\alpha_1} \partial_2^{\alpha_2} \ldots
\partial_n^{\alpha_n},$  where $\partial_i = \partial/\partial x_i$.
For an open set $\Omega \subset {\mathbb R}^n$ 
we denote $C_c(\Omega)$ the space of compactly supported continuous
functions on $\Omega$, and let $\| \cdot \|_{\infty}$ stands for the
$L_{\infty}$-norm. For an integer $k$, define $C^k_c(\Omega)$ the
class of functions $f$ from $C_c(\Omega)$ such that
$\partial^{\alpha}f \in C_c(\Omega)$ for all $|\alpha|\leq k$. The
$C^k$-norm of $u \in C^k_c(\Omega)$ is given by  $$ \| u \|_{C^k} =
\max_{|\alpha|\leq k} \, \sup_{\Omega} |
\partial^{\alpha} u| $$ As usual, $C^{\infty}_c(\Omega)$ is the
intersection of all the $C^k_c(\Omega)$ and is endowed with the $C^{\infty}$-topology. We denote by $supp(f)$ the
support of the function $f$ i.e., the closure of the set $\{x \, | \, f(x) \neq 0 \}$, and by $int(D)$ the interior
of a domain $D \subset {\mathbb R}^n$. For an open domain $ U \subset {\mathbb R}^{2n}$, we denote by
Ham$_c(D,\omega)$ the group of Hamiltonian diffeomorphisms of ${\mathbb R}^{2n}$, which are generated by Hamiltonian
functions $H : {\mathbb R}^{2n} \times [0,1] \rightarrow {\mathbb R}$, whose support is compact and contained in $U
\times [0,1]$. Here $\omega$ is the standard symplectic form on ${\mathbb R}^{2n}$ given by $\omega=dp \wedge dq$,
where $\{q_1,p_1,\ldots,q_n,p_n\}$ are the canonical coordinates in ${\mathbb R}^{2n}$.  We say that a function $f:
{\mathbb R}^{2n} \rightarrow {\mathbb R}$ is a {\it product function}, if it is of the form $f(q,p) = \prod_{i=1}^n
f_i(q_i,p_i)$.
Finally, the letters $C,C_1,C_2,\ldots$ are used to denote positive
constants that depend solely on the dimension of the ambient space
relevant in each particular context.

\noindent {\bf Acknowledgements:} Both authors are grateful to H. Hofer 
and L. Polterovich,
for their interest in this work and helpful comments. This article was written during visits of the
first author at the Institute for Advanced Study (IAS) in Princeton, and visits of the second author at the Mathematical Sciences Research Institute (MSRI), Berkeley. We thank these institutions for their stimulating working atmospheres and for financial support. The first author was supported by the Mathematical Sciences Research Institute. The second author was  supported by NSF Grant DMS-0635607, and by the Israel Science Foundation grant No. 1057/10.
Any opinions, findings and conclusions or recommendations expressed in this material are those of the authors and do not necessarily reflect the views of the NSF or the ISF.

\section{Outline of the Proof} \label{section-outline}

Here we briefly describe the strategy of the proof of
Theorem~\ref{Main-thm}. For technical reasons, we shall prove
Theorem~\ref{Main-thm} for norms on the space $C^{\infty}(M)$,
 instead of the space ${\mathcal A}$.
The original claim would follow from this result since any
Ham$(M,\omega)$ invariant pseudo-norm $\| \cdot \|$ on ${\mathcal
A}$ can be naturally extended to an invariant pseudo-norm $\| \cdot
\|'$ on $C^{\infty}(M)$ by setting $$\| f \|' = \| f- M_f \|, \ {\rm
where \ } M_f = {\textstyle {\frac 1 {Vol(M)}} \int_M f \omega^n}$$
Note that if $\| \cdot \|$ is continuous in the
$C^{\infty}$-topology, then so is $\| \cdot \|'$. Moreover, the norm
$\| \cdot \|'$ coincides with $\| \cdot \|$ on the space ${\mathcal
A}$.
By a standard partition of unity argument, we reduce the proof of
the theorem to a ``local result", i.e., we show that it is
sufficient to prove Theorem~\ref{Main-thm} for
Ham$_c(W,\omega)$-invariant norms on $C_c^{\infty}(W)$, where
$W=(-L,L)^{2n}$ is a $2n$-dimensional cube in ${\mathbb R}^{2n}$.
As a first step toward this end, 
we introduce a special Ham$_c(W,\omega)$-invariant norm $\| \cdot
\|_{{\mathcal F}, max}$ on $C_c^{\infty}(W)$, which depends on a
given finite collection ${\mathcal F} \subset C_c^{\infty}(W)$. More
precisely:
\begin{mydef1} For a non-empty finite collection ${\mathcal F} \subset C^{\infty}_c(W)$, let
$$  \LL_{\mathcal F} := \Bigl \{ \sum_{i} c_{i} \, \Phi_{i}^* {f}_i \ | \ c_i \in {\mathbb R},
\ \Phi_i \in {\rm Ham}_c(W,\omega), \ {f}_i \in {\mathcal F},\  {\rm
and} \ \# \{i \, | \, c_i \neq 0 \} < \infty \Bigr  \}, $$ be
equipped  with the norm $$ \| f \|_{\LL_{{\mathcal F}}} = \inf \sum
|c_i|,$$ where the infimum is taken over all the representations $f
= \sum c_i \, \Phi_i^* {f}_i$ as above.
\end{mydef1}
\begin{mydef2} 
For any compactly supported function $ f \in C_c^{\infty}(W) $, let
\begin{equation*} 
 \| f \|_{{\cal F}, \, max} = \inf \big\{ \liminf_{i \rightarrow \infty} \| f_i
\|_{\LL_{{\mathcal F}}} \big\} ,\end{equation*} where the infimum is
taken over all subsequences $\{f_i\}$ in $ \LL_{\mathcal F} $ which
converge to $f$ in the $C^{\infty}$-topology. As usual, the infimum
of the empty set is set to be $+ \infty$.
\end{mydef2}
The main feature of the norm $\| \cdot \|_{{\cal F}, \, max}$ is 
that it dominates from above any other Ham$_c(W,\omega)$-invariant
norm that is continuous in the
$C^{\infty}$-topology  
(see Lemma~\ref{lemma-about-max-norm}). The next step, which is also the main part of the proof, is to show that for
a suitable collection of functions ${\mathcal F} \subset C_c^{\infty}(W)$, the norm $\| \cdot \|_{{\mathcal F}, \,
max}$ is in turn dominated from above by the $L_{\infty}$-norm. This is proved in Theorem~\ref{Main-Thm-local-case},
and in light of the above, it completes the proof of Theorem~\ref{Main-thm}. The proof of
Theorem~\ref{Main-Thm-local-case} is divided into two main steps which we now turn to describe:

\noindent  {\bf The local two-dimensional case:} Here, we shall
construct a collection ${\mathcal F}$ of smooth compactly supported
functions on a two-dimensional cube $W^2 \subset {\mathbb R}^{2n}$,
such that any  $f \in C_c^{\infty}(W^2)$ satisfies $\| f
\|_{{\mathcal F}, \, max} \leqslant C \| f \|_{{\infty}}$ for some
absolute constant $C$. There are two independent components  in the
proof of this claim. First, we show that one can
 decompose any $f \in C_c^{\infty}(W^2)$ with $\| f \|_{{\infty}} \leqslant 1$
 into a finite combination $f = \sum_{i=1}^{N_0} \epsilon_j \Psi^*_j g_j$. Here, $ \epsilon_j \in \{ -1, 1 \} $, $\Psi_j \in {\rm Ham}_c(W^2,\omega)$, and $g_j$ are smooth radial 
functions  whose $L_{\infty}$-norm is bounded by an absolute
constant, and
%
which satisfy certain other technical conditions (see
Proposition~\ref{prop-about-decompsition2} for the precise
statement). In what follows we call such functions by ``simple functions". We emphasize that $N_0$ is a constant independent of
$f$. Thus, we can restrict ourselves to the case where $f$ is a
``simple function''. In the second part of the proof, we construct
an explicit collection ${\mathcal F} = \{ {\mathfrak f_0},
{\mathfrak f_1} , {\mathfrak f_2} \} $, where ${\mathfrak f_i} \in
C_c^{\infty}(W^2),$ and $i=0,1,2$. 
Using an averaging procedure
(Proposition~\ref{Proposition-about-averaging}), we show that every
``simple function'' $f \in C_c^{\infty}(W^2)$ can be approximated
arbitrarily well in the $C^{\infty}$-topology by a sum of the form
$$ \sum_{i,k} \alpha_{i,k} \widetilde \Psi_{i,k}^* {\mathfrak f}_{k}, \ {\rm where \ }  \widetilde \Psi_{i,k} \in {\rm Ham}_c(W^2,\omega), \ k \in \{0,1,2 \}, $$ 
and such that $\sum | \alpha_{i,k} | \leq C \| f \|_{\infty}$ for
some absolute constant $C$. Combining this with the above definiton
of $\| \cdot \|_{{\mathcal F}, \, max}$, we conclude that
$ \| f \|_{{\mathcal F}, \, max} \leq C \| f \|_{\infty}$, for every
$f \in C_c^{\infty}(W^2)$. This completes the proof of
Theorem~\ref{Main-Thm-local-case} in the 2-dimensional case.

\noindent  {\bf The local higher-dimensional case:} The proof of
Theorem~\ref{Main-Thm-local-case} for arbitrary dimension strongly
relies on the 2-dimensional case. We extend (in a natural way) the
construction of the above mentioned collection ${\mathcal F} = \{
{\mathfrak f_0},{\mathfrak
f_1},{\mathfrak f_2} \}$  
to the $2n$-dimensional case. By abuse of notation, we shall denote
the new collection by ${\mathcal F}$ as well. Based on the proof of
Theorem~\ref{Main-Thm-local-case} in the $2$-dimensional case, and
on the construction of the class ${\mathcal F}$, we show that
Theorem~\ref{Main-Thm-local-case} holds for ``product functions'',
i.e., for $f \in C_c^{\infty}(W)$ of the form $f = \prod_{i=1}^n
f_i(q_i,p_i)$, where $f_i \in C_c^{\infty}(W^2)$.
From this we derive, using a Fourier series argument, that the norm
$\| \cdot \|_{{\mathcal F}, max}$ is dominated from above by the $\|
\cdot \|_{C^{2n+1}}$-norm, i.e., for any $f \in C_c^{\infty}(W)$ one
has \begin{equation} \label{eq-in-outline-about-C2n-norm}  \|f
\|_{{\mathcal F}, \, max} \leq C \|f \|_{C^{2n+1}}, \end{equation}
for some constant $C$  (see Proposition~\ref{Ck-bound-lemma} for the
proof of the above two claims).
Next, for any $\epsilon > 0$, we construct a partition of unity
function ${\mathcal R}^{\epsilon}: \mathbb{R}^{2n} \rightarrow
\mathbb{R} $,
with $ supp( {\mathcal R}^{\epsilon} ) \subset (-\epsilon,\epsilon)^{2n} $, and such that 
$$ \sum_{v \in \epsilon \mathbb{Z}^{2n}}  {\mathcal R}^{\epsilon}(x-v) = \Id(x) $$
For any $w \in { \mathfrak X}:= \{ 0,1,2,3 \}^{2n}$, we consider a finite grid  $\Gamma^{\epsilon}_{w} \subset W$ given by: 
$$ \Gamma^{\epsilon}_{w} =
\epsilon w + 4 \epsilon \mathbb{Z}^{2n} \cap (-L+
3\epsilon,L-3\epsilon)^{2n}, $$ 
and define
$$ f_{w}(x) = \sum_{v \in \Gamma^{\epsilon}_w} {\cal
R}^{\epsilon}(x-v) f(x) $$
Note, that for $ \epsilon $ sufficiently small 
such that $ supp \, (f) \subset (-L+4\epsilon,L-4\epsilon)^{2n} $,
one has $$ f(x) = \sum_{w \in {\mathfrak X}} f_{w}(x) $$ For any $ w
\in {\mathfrak X} $, the function $ f_{w} $ is a finite
sum of smooth functions 
that lie near the points of the grid $ \Gamma^{\epsilon}_{w}$.
Moreover, these functions have mutually disjoint supports, which are
spaced commodiously. 
Next, we fix $ w \in {\mathfrak X} $, and for any
$ v \in \Gamma^{\epsilon}_w $ we consider the decomposition of $f
\in C^{\infty}_c(W)$ as a Taylor polynomial of order $2n+1$ and a
remainder, around the point $ v $ (this specific choice of the order ensure,
based on~$(\ref{eq-in-outline-about-C2n-norm})$, the  estimate~$(\ref{eq-bound-for-h_w})$ below):
 $$ f(x) = P_{2n+1}^v(x-v) +
R_{2n+1}^v(x-v).$$ We decompose each $f_w$ as $ f_w(x) = g_w(x) +
h_w(x) $, where
$$ g_{w}(x) = \sum_{v \in \Gamma^{\epsilon}_w} {\cal
R}^{\epsilon}(x-v) P_{2n+1}^v(x-v) , \ {\rm and}  \ \ h_{w}(x) = \sum_{v \in \Gamma^{\epsilon}_w} {\cal
R}^{\epsilon}(x-v) R_{2n+1}^v(x-v) .$$ Based on~($\ref{eq-in-outline-about-C2n-norm}$), in
Lemma~\ref{lemma-C^k-estimate-of-the-reminder} (cf. Corrolary~\ref{cor-about-max-norm-of-the-reminder}) we show that
the $\| \cdot \|_{{\mathcal F}, max}$-norm of the reminder parts $\{ h_w \}$ can be taken to be arbitrarily small.
More precisely,
\begin{equation} \label{eq-bound-for-h_w}
 \| h_{w} \|_{{\mathcal F}, max} \leqslant C_1 \| h_{w} \|_{C^{2n+1}} \leqslant C_2 \epsilon
\|f\|_{C^{2n+2}} ,
\end{equation}
 for some constants $C_1$ and $C_2$.
On the other hand, using a combinatorial argument and the above
mentioned fact that Theorem~\ref{Main-Thm-local-case} holds for
``product functions", we prove the estimate
\begin{equation} \label{eq-bound-for-g_w}
 \| g_{w} \|_{{\mathcal F}, \, max} \leqslant C_3 \bigl ( \sum_{i=0}^{2n+1}
\|f\|_{C^i} \epsilon^i \bigr)
\end{equation}
for some  constnat $C_3$. Combining the above
estimates~$(\ref{eq-bound-for-h_w})$ and~$(\ref{eq-bound-for-g_w})$
for all $ w \in {\mathfrak X} $, and taking $ \epsilon \rightarrow 0
$, we conclude that for every $f \in C_c^{\infty}(W)$ one has $$ \|
f \|_{{\mathcal F}, \, max} \leqslant C_4 \| f \|_{\infty} ,$$ for
some absolute constant $C_4$. This completes the proof of the
theorem.

\section{A Local Version of the Main Result} \label{section-local-version}

In this section we prove a local version of our main result
(Theorem~\ref{Main-Thm-local-case} below), which would later serve
as the main component in the proof of Theorem~\ref{Main-thm}.

Consider an open cube $ W = I^{2n} \subset \mathbb{R}^{2n} $, where
$ I = (-L,L) \subset \mathbb{R} $ is an open interval. Endow $W$
with linear coordinates $(q_1,p_1,\ldots,q_n,p_n)$, and with the
standard symplectic structure $\omega = dp \wedge dq$ descending
from $
\mathbb{R}^{2n} $. For a finite non-empty collection ${\mathcal F}$ of functions 
in $C_c^{\infty}(W)$, we define the space
$$  \LL_{\mathcal F} := \Bigl \{ \sum_{i} c_{i} \, \Phi_{i}^* {f}_i \ | \ c_i \in {\mathbb R},
\ \Phi_i \in {\rm Ham}_c(W,\omega), \ {f}_i \in {\mathcal F},\  {\rm
and} \ \# \{i \, | \, c_i \neq 0 \} < \infty \Bigr  \} $$ We equip
${\mathcal L}_{\mathcal F}$ with the norm
$$ \| f \|_{\LL_{{\mathcal F}}} := \inf   \sum |c_i|, $$
where the infimum is taken over all the representations 
$f = \sum c_i \, \Phi_i^* {f}_i$ as above.
\begin{definition} \label{definition1-of-our-max-norm-local}
For any compactly supported function $ f \in C_c^{\infty}(W) $, let
\begin{equation} \label{definition-of-max-norm-local}
 \| f \|_{{\cal F}, \, max} = \inf \big\{ \liminf_{i \rightarrow \infty} \| f_i
\|_{\LL_{{\mathcal F}}} \big\} ,\end{equation} where the infimum is taken over all subsequences $\{f_i\}$ in $
\LL_{\mathcal F} $ which converge to $f$ in the $C^{\infty}$-topology. If such sequence do not exists, we set $ \| f
\|_{{\cal F}, \, max} \equiv +\infty$.
\end{definition}

\begin{remark} \label{rmk-about-max-norm} {\rm It follows from the definition above
that $\| \cdot \|_{{\cal F}, \,max}$ is homogeneous, Ham$_c(W,\omega)$-invariant, and satisfies the triangle
inequality\footnote{When $\| \cdot \|_{{\cal F}, \,max} \equiv + \infty $, these statements are trivially true. }.
Moreover, let $ \{ f_k  \}$ be a sequence of smooth functions that converge in the $C^{\infty}$-topology to $f$, and
such that for every $k \geqslant 1$ one has $ \|f_k\|_{{\cal F}, \,max} \leqslant C $ for some constant $C$. Then $
\|f\|_{{\cal F}, \,max} \leqslant C $. The fact that $\| \cdot \|_{{\cal F}, \,max}$ is non-degenerate (i.e., $\| f
\|_{{\cal F}, \,max} = 0$ if and only if $f=0$) follows from the next lemma.
}
\end{remark}
\begin{lemma} \label{lemma-about-max-norm}
Let ${\mathcal F} \subset C_c^{\infty}(W)$ be a non-empty finite collection of smooth compactly supported functions
in $W$. Then, any Ham$_c(W,\omega)$-invariant norm $ \| \cdot \| $ on $ C_c^\infty(W) $ which is continuous in the $
C^\infty $-topology, satisfies $ \| \cdot \| \leqslant C \| \cdot \|_{{\cal F}, \,max}$ for some absolute constant
$C$.
\end{lemma}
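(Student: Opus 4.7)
The plan is to first establish the inequality on the subspace $\mathcal{L}_\mathcal{F}$, where it follows directly from invariance and the triangle inequality, and then extend it to all of $C_c^\infty(W)$ using the $C^\infty$-continuity of $\|\cdot\|$ together with the definition of $\|\cdot\|_{\mathcal{F},\,max}$ via $C^\infty$-approximating sequences.

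Step 1 (the algebraic estimate on $\mathcal{L}_\mathcal{F}$). Since $\mathcal{F}$ is a finite collection of elements of $C_c^\infty(W)$, the quantity
$$ C := \max_{f_i \in \mathcal{F}} \|f_i\| $$
is a well-defined finite constant. For any $f \in \mathcal{L}_\mathcal{F}$ and any representation $f = \sum_i c_i\, \Phi_i^* f_i$ with $c_i \in \mathbb{R}$, $\Phi_i \in {\rm Ham}_c(W,\omega)$, and $f_i \in \mathcal{F}$, the triangle inequality together with the $\mathrm{Ham}_c(W,\omega)$-invariance of $\|\cdot\|$ yields
$$ \|f\| \;\leq\; \sum_i |c_i|\,\|\Phi_i^* f_i\| \;=\; \sum_i |c_i|\,\|f_i\| \;\leq\; C \sum_i |c_i|. $$
Taking the infimum over all such representations gives $\|f\| \leq C\, \|f\|_{\mathcal{L}_\mathcal{F}}$ for every $f \in \mathcal{L}_\mathcal{F}$.

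Step 2 (passage to the $C^\infty$-closure). Let $f \in C_c^\infty(W)$. If $\|f\|_{\mathcal{F},\,max} = +\infty$, the claimed inequality is trivial, so assume it is finite. Fix $\varepsilon > 0$ and choose a sequence $\{f_i\} \subset \mathcal{L}_\mathcal{F}$ with $f_i \to f$ in the $C^\infty$-topology and
$$ \liminf_{i\to\infty} \|f_i\|_{\mathcal{L}_\mathcal{F}} \;\leq\; \|f\|_{\mathcal{F},\,max} + \varepsilon. $$
Passing to a subsequence (which still converges to $f$ in $C^\infty$), I may assume $\|f_i\|_{\mathcal{L}_\mathcal{F}} \leq \|f\|_{\mathcal{F},\,max} + 2\varepsilon$ for all sufficiently large $i$. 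By Step 1, $\|f_i\| \leq C\,(\|f\|_{\mathcal{F},\,max} + 2\varepsilon)$. The hypothesis that $\|\cdot\|$ is continuous in the $C^\infty$-topology gives $\|f_i\| \to \|f\|$, so $\|f\| \leq C\,(\|f\|_{\mathcal{F},\,max} + 2\varepsilon)$. Letting $\varepsilon \to 0$ finishes the proof.

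There is no serious obstacle here: the norm $\|\cdot\|_{\mathcal{F},\,max}$ has been designed precisely so that the easy algebraic estimate on $\mathcal{L}_\mathcal{F}$ propagates to its $C^\infty$-closure. The only nontrivial ingredient is the $C^\infty$-continuity hypothesis on $\|\cdot\|$, which is used in an essential way in Step 2; without it one cannot pass from a bound on $\|f_i\|$ to a bound on $\|f\|$. The constant $C$ is ``absolute'' only in the sense that it is independent of $f$, while depending on the fixed data $\mathcal{F}$ and $\|\cdot\|$.
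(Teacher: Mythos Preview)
Your proof is correct and follows essentially the same approach as the paper's: set $C=\max_{g\in\mathcal F}\|g\|$, use invariance and the triangle inequality to get $\|f\|\le C\|f\|_{\mathcal L_{\mathcal F}}$ on $\mathcal L_{\mathcal F}$, and then pass to the limit using $C^\infty$-continuity of $\|\cdot\|$. Your write-up is in fact more careful than the paper's terse version, explicitly handling the $+\infty$ case and the $\varepsilon$/subsequence bookkeeping in Step~2.
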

\begin{proof}[\bf Proof of Lemma~\ref{lemma-about-max-norm}]
Let $C = \max \{ \| g \|  ; \, g \in {\mathcal F} \}$. For any $f = \sum c_i \, \Phi_i^*  f_i \in {\cal L}_{\mathcal
F}$, one has:
\begin{equation} \label{simpel-estimate1}  \|f \| \leq   \sum |c_i| \| \Phi_i^* f_i \|  \leq  C \sum |c_i| \leq C  \| f \|_{{\cal F}, \,max} \end{equation}
The lemma now follows from combining~$(\ref{simpel-estimate1})$, definition~$(\ref{definition-of-max-norm-local})$,
and the fact that the norm $ \| \cdot \| $ is assumed to be continuous in the $ C^\infty $-topology.
\end{proof}

The following theorem, which is a ``local version" of
Theorem~\ref{Main-thm}, shows that for a suitable choice of a
collection ${\mathcal F}$, the subspace $\LL_{{\mathcal F}} \subset
C_c^{\infty}(W)$ is dense in the $C^{\infty}$-topology,
 and moreover, that the norm $\| \cdot \|_{{\cal F}, \, max}$ on $C^{\infty}_c(W)$ is dominated from above by the $\| \cdot \|_{\infty}$-norm.

\begin{theorem} \label{Main-Thm-local-case}
There is a finite collection ${\mathcal F} \subset 
C_c^{\infty}(W)$, such that $ \| \cdot \|_{{\cal F}, \, max} $ is a
genuine norm on
$C_c^{\infty}(W)$, and 
$\| \cdot \|_{{\mathcal F}, \, max} \leq C \, \|\cdot \|_{\infty}$
for some absolute constant $C$.
\end{theorem}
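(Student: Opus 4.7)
My plan is to attack Theorem~\ref{Main-Thm-local-case} in three coordinated stages: establish a $2$-dimensional local result, promote it to product functions in arbitrary dimension via a Fourier argument, and then descend from a $C^{2n+1}$-norm estimate to an $L_\infty$-estimate by a partition of unity combined with Taylor expansion.

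\textbf{Stage 1 (two-dimensional base case).} I would fix a specific small collection $\mathcal{F} = \{\mathfrak{f}_0, \mathfrak{f}_1, \mathfrak{f}_2\}$ of carefully chosen radial functions compactly supported in a $2$-dimensional cube $W^2 \subset \mathbb{R}^2$, and prove two independent reductions. The first is a \emph{decomposition into simple pieces}: any $f \in C_c^\infty(W^2)$ with $\|f\|_\infty \leq 1$ can be written as a sum of bounded length $f = \sum_{j=1}^{N_0} \epsilon_j \Psi_j^* g_j$, where $\epsilon_j \in \{\pm 1\}$, $\Psi_j \in \mathrm{Ham}_c(W^2,\omega)$, and each $g_j$ is a smooth radial profile with $\|g_j\|_\infty$ bounded by an absolute constant, with $N_0$ independent of $f$. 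The second is an \emph{averaging procedure}: every such simple radial function can be approximated arbitrarily well in the $C^\infty$-topology by finite combinations $\sum \alpha_{i,k} \widetilde{\Psi}_{i,k}^* \mathfrak{f}_k$ with $\sum |\alpha_{i,k}| \leq C$. Combined with the definition of $\|\cdot\|_{\mathcal{F},max}$, these yield the base-case estimate $\|f\|_{\mathcal{F}, max} \leq C \|f\|_\infty$ on $W^2$.

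\textbf{Stages 2 and 3 (higher dimensions).} I would extend $\mathcal{F}$ to $W = I^{2n}$ in a natural tensor-like fashion. For a product function $f(q,p) = \prod_i f_i(q_i, p_i)$ the two-dimensional estimate tensors up directly, since Hamiltonian diffeomorphisms act diagonally on each symplectic plane. A Fourier series expansion on a large torus, combined with the product estimate, then writes any $f \in C_c^\infty(W)$ as an absolutely convergent sum of product functions with rapidly decaying coefficients, yielding the intermediate bound $\|f\|_{\mathcal{F}, max} \leq C \|f\|_{C^{2n+1}}$. To upgrade this $C^{2n+1}$-bound to the desired $L_\infty$-bound, I introduce, for small $\epsilon > 0$, a partition of unity $\mathcal{R}^\epsilon$ at scale $\epsilon$ and split the index set into the colors $w \in \{0,1,2,3\}^{2n}$, so that within each color the supports are mutually disjoint and well-spaced. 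For each color I decompose $f_w = g_w + h_w$, where $g_w$ replaces $f$ near each grid point by its order-$(2n+1)$ Taylor polynomial. The remainder obeys $\|h_w\|_{C^{2n+1}} \leq C \epsilon \|f\|_{C^{2n+2}}$ and therefore has small $\mathcal{F},max$-norm by the intermediate step, while $g_w$ is a disjoint sum of polynomial-times-bump pieces that, after local symplectic identifications, fall under the product-function case; this gives $\|g_w\|_{\mathcal{F}, max} \leq C \sum_{i=0}^{2n+1} \|f\|_{C^i} \epsilon^i$. Summing over the $4^{2n}$ colors and letting $\epsilon \to 0$ produces $\|f\|_{\mathcal{F}, max} \leq C \|f\|_\infty$. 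Non-degeneracy of $\|\cdot\|_{\mathcal{F}, max}$ is then automatic: by Lemma~\ref{lemma-about-max-norm}, the $L_\infty$-norm, being itself invariant, non-degenerate, and $C^\infty$-continuous, is dominated by $C\|\cdot\|_{\mathcal{F}, max}$.

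\textbf{Main obstacle.} The delicate point is Stage 1. Choosing $\mathcal{F}$ and constructing the averaging scheme so that the $L_\infty$-norm of $f$ controls the \emph{unsigned} coefficient total $\sum |\alpha_{i,k}|$, rather than only a signed sum, is subtle: it requires exploiting the rich supply of area-preserving rearrangements available in dimension two in order to realize any radial profile of sup-norm at most one as a bounded-coefficient average of Hamiltonian pushforwards of the fixed $\mathfrak{f}_k$'s, and separately producing the fixed-length simple-function decomposition. Once these two $2$-dimensional inputs are in hand, everything downstream is a careful but essentially formal combination of Fourier and Taylor expansions.
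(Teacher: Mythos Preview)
Your proposal is correct and follows essentially the same architecture as the paper's proof: the two-dimensional base case via decomposition into simple radial pieces plus an averaging argument, the tensor extension to product functions and the Fourier-based $C^{2n+1}$-bound, and the $\epsilon$-scale partition-of-unity with Taylor expansion to descend to an $L_\infty$-bound. One small caveat: not all of $\mathfrak{f}_0,\mathfrak{f}_1,\mathfrak{f}_2$ can be radial (the averaging over rotations would then be vacuous), and the estimate for $g_w$ requires more than ``local symplectic identifications''---since a disjoint sum of translated product functions with varying coefficients is not itself a product function, the paper inserts a combinatorial permutation-and-telescoping argument (Lemmas~\ref{ineq-for-sum-of-translations-lemma} and~\ref{permutation-of-cubes-lemma}) to reduce to genuine product functions without losing a factor of $|\Gamma_w^\epsilon|\sim\epsilon^{-2n}$.
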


The remainder of this section is devoted to the proof
of Theorem~\ref{Main-Thm-local-case}, which we split 
into two separate cases: 

\subsection{Theorem~\ref{Main-Thm-local-case} - the two-dimensional case} \label{subsection-thm3.4-the-2-dim-case}

We assume that $n=1$, and hence $W = (-L,L) \times (-L,L)$. We set
$z = x + iy$, where $\{x,y\}$ are local coordinates on $W$, and
denote by $D_a = \{ |z| \leq a \}$ the disc with radius $a$ centered
at the origin, and by $D_{a,A} = \{ a \leq |z| \leq A\} $ the
annulus with radii $a,A$. The proof of
Theorem~\ref{Main-Thm-local-case} in the two-dimensional case
follows from the next two propositions, the proof of which we
postpone to Subsections~\ref{subsection-prop-about-decompsition2}
and~\ref{subsection-Proposition-about-averaging}.

\begin{proposition} \label{prop-about-decompsition2}
There are positive constants $a,A,C$ such that $a < A < L$; a smooth
radial function ${\mathfrak f_1}$ with $supp({\mathfrak f_1}) = D_A$; 
and an integer number $N_0 \in {\mathbb N}$, such that every $f \in
C_c^{\infty}(W)$ with $\|f \|_{\infty} \leqslant 1$ can be
decomposed as
$$f =   \sum_{j=1}^{N_0}  \epsilon_j \, \Phi_j^* g_j,$$ 
where $\Phi_{j} \in Ham_c(W,\omega)$, $ \epsilon_j \in \{ -1,1 \} $, and $g_j$ are smooth radial
functions that satisfy: 
\begin{equation} \label{eq-properties-of-g_j} \ supp(g_j) = D_A, \  
g_j \equiv {\mathfrak f_1} \ {\rm  on} \ D_a, \ {\rm and} \ \|g_j\|_{\infty} \leqslant C \end{equation} 
\end{proposition}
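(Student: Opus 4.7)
I would fix once and for all constants $0 < a < A < L$ and a smooth radial template $\mathfrak{f}_1$ with $\mathrm{supp}(\mathfrak{f}_1) = D_A$, and then reduce the problem to producing a bounded signed decomposition $f = \sum_i \eta_i\, \Psi_i^* r_i$ in which each $r_i$ is a radial function supported in the annulus $D_{a,A}$. The requirement that each summand $g_j$ coincide with $\mathfrak{f}_1$ on $D_a$ is accommodated by a pairing trick: given such a decomposition, set $g_i^+ := \mathfrak{f}_1 + r_i$ and $g_i^- := \mathfrak{f}_1$; both are radial, both supported in $D_A$, and both coincide with $\mathfrak{f}_1$ on $D_a$ (since $r_i$ vanishes there), while $\Psi_i^* g_i^+ - \Psi_i^* g_i^- = \Psi_i^* r_i$. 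Doubling the number of terms and assigning signs $(+\eta_i,-\eta_i)$ then reproduces $f$ in the form demanded by the Proposition, with $\|g_i^\pm\|_\infty \leq \|\mathfrak{f}_1\|_\infty + \max_i \|r_i\|_\infty$.

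\textbf{Fragmentation.} Independently of $f$, I would fix a finite open cover $\{U_1,\ldots,U_N\}$ of $W$ by small topological balls, each of symplectic area strictly less than the area of $D_{a,A}$, together with a subordinate partition of unity $\{\psi_i\}_{i=1}^N$. Writing $f = \sum_i(\psi_i f)$ expresses $f$ as a sum of $N$ pieces, each supported in one $U_i$ with sup-norm at most $\|f\|_\infty \leq 1$, where $N$ depends only on $W$ and our cover. Using a standard symplectic ball-embedding together with Hamiltonian isotopy extension, one finds $\Phi_i \in \mathrm{Ham}_c(W,\omega)$ carrying the support of $\psi_i f$ into a small topological disc $V_i \subset D_{a,A}$. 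It therefore suffices to decompose a single function $h$, supported in a small disc $V \subset D_{a,A}$ with $\|h\|_\infty \leq 1$, as a universally bounded signed sum of Hamiltonian pullbacks of radial functions supported in $D_{a,A}$.

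\textbf{The main obstacle: decomposing a small-support function.} The heart of the proof is to express such an $h$ as a bounded signed sum of pullbacks of radial functions on $D_{a,A}$, with uniformly controlled sup-norms. The difficulty is that a radial pullback $\Phi^* r$ is constant along the $\Phi$-image of concentric circles, whereas $h$ is a general bump; any Morse-type decomposition into unimodal pieces would fail to produce a fixed number of terms, since the number of critical points of a smooth $h$ is a priori unbounded. My strategy is first to Hamiltonianly reshape the support of $h$ into a thin annular strip $D_{s,s+\delta} \subset D_{a,A}$ of universal width $\delta$ (possible since the symplectic area of a small ball fits inside such a strip), and then to apply a fixed angular partition of unity on this strip, cutting $h$ into a bounded number of pieces each supported in a small arc-box. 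Every arc-box piece is then realized as the difference of two carefully chosen radial pullbacks: one constructed so that a Hamiltonian diffeomorphism transports the level curves of a smoothly shaped radial profile onto contours filling the arc-box, and the other cancelling the contribution of the same profile along the complementary portion of each circle. The number of arc-box pieces and the sup-norm constant for the radial profiles are governed purely by the geometry of the thin strip and our partition of unity, hence are independent of $h$.

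Combining the pairing, fragmentation, and small-bump decomposition, one obtains a representation of $f$ with $N_0$ bounded by twice the product of the number of balls $N$ with the number of arc-box pieces per ball, and with $\|g_j\|_\infty$ bounded by $\|\mathfrak{f}_1\|_\infty$ plus the universal constant coming from the arc-box construction. Neither bound depends on $f$. I expect the third step --- producing a uniformly bounded signed radial-pullback decomposition of a localized bump --- to be by far the most delicate, since that is where the independence of $N_0$ from the fine structure of $f$ must ultimately be secured, and it is the only step where the specific symplectic geometry of the annulus $D_{a,A}$ plays an essential role.
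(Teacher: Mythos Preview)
Your fragmentation step and your pairing trick are both sound and correspond closely to what the paper does. The gap is in your third step, and it is a real one.

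You claim that an arc-box piece $h$, supported in a small sector of a thin annulus with $\|h\|_\infty \leq 1$, can be written as a difference of two Hamiltonian pullbacks of radial profiles of uniformly bounded sup-norm. But your description---``a Hamiltonian diffeomorphism transports the level curves of a smoothly shaped radial profile onto contours filling the arc-box''---presupposes exactly what needs to be proved. For $\Phi^*\rho$ to equal $h$ plus something radial, the level sets of $h + (\text{radial})$ must be diffeomorphic images of concentric circles, i.e.\ the sum must have no critical points in the annulus. A bound on $\|h\|_\infty$ alone gives no control over the radial derivative or the critical set of $h$; neither the reshaping into a thin strip nor the angular partition changes this, since within each arc-box the function is still arbitrary. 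So as stated, there is no mechanism that forces the number of radial pullbacks, or their sup-norms, to be independent of $f$.

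The paper supplies precisely this missing mechanism. After the partition-of-unity fragmentation (your Step~2), each piece is pulled back by a compactly supported diffeomorphism---not yet area-preserving---so that the result has a \emph{uniform bound on its $x$-derivative} (Lemma~\ref{lemma-about-decomposition}; this is the technical heart, proved via a carefully constructed log-concave cutoff in Lemmas~\ref{L:special-function}--\ref{L:Localization2}). One then maps the rectangle to an annular sector, converting the $x$-derivative bound into a radial-derivative bound. Now one fixes $\mathfrak{f}_1$ to be a radial function whose radial derivative is steeper than this universal bound; adding $\mathfrak{f}_1$ yields a function $H$ with $\partial_r H < 0$ on $D_A\setminus\{0\}$ and a single nondegenerate maximum at the origin. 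Morse theory then gives a diffeomorphism $\Upsilon$, supported in $D_{a,A}$, with $\Upsilon^* H$ radial. Finally, a Moser-type lemma (Lemma~\ref{lemma-reparametrization}) upgrades the non-symplectic diffeomorphisms used along the way to Hamiltonian ones, at the cost of composing with radial reparametrizations that preserve radiality. The output is exactly a two-term decomposition $f = (\Phi')^* K - (\Phi'')^* \mathfrak{f}_1$ per fragment, which is your pairing. The key idea you are missing is that one must control a \emph{derivative}, not just the sup-norm, before the ``add a steep radial profile and invoke Morse theory'' trick can kill all extraneous critical points.
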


\begin{proposition} \label{Proposition-about-averaging}
Let $ 0 < a < A $ be positive numbers. Then there exists a smooth
function $F_{a,A} : \mathbb{R}^2 \rightarrow \mathbb{R} $ with
 $ supp(F_{a,A}) \subset D_A$,  such that the following holds:
for every smooth radial function $ f : \mathbb{R}^2 \rightarrow
\mathbb{R} $, that satisfies
\begin{equation} \label{eq-with-cond-on-the-func-in-the-ave-argument}  \|f \|_{\infty} \leqslant 1,  \ supp(f) \subset D_{a,A}, \ {\rm and}  \ \int_{\mathbb{R}^2} f \omega = 0, \end{equation}
%
%
there exists an area-preserving diffeomorphism $ \Phi : \mathbb{R}^2 \rightarrow \mathbb{R}^2 $, with $ supp(\Phi)
\subset D_{a,A}$, 
and such that: 
$$ \int_{D_r} \Phi^* F_{a,A} \,
\omega = \int_{D_r} f  \omega ,  \ \ {\rm for \ any \ } r>0$$
\end{proposition}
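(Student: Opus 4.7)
The plan is to construct $F_{a,A}$ as an explicit angular-oscillatory function, and for any admissible radial $f$ to build $\Phi$ in two stages: (i) choose a smooth nested family of sub-regions of $D_{a,A}$ whose integrals against $F_{a,A}$ realize the prescribed radial profile of $f$; (ii) promote this family to an area-preserving diffeomorphism via a Moser-type argument. I would work in symplectic polar coordinates $(\rho,\theta)=(r^2/2,\theta)$, so that $\omega=d\rho\wedge d\theta$ and $D_{a,A}$ corresponds to the cylinder $C=[\rho_a,\rho_A]\times S^1$ with $\rho_a=a^2/2$, $\rho_A=A^2/2$. Writing $\bar f(\rho)=f(\sqrt{2\rho})$, the desired identity on $\Phi$ becomes $\int_{\Phi(D_{\sqrt{2\rho}})}F_{a,A}\,\omega=I(\rho):=2\pi\int_{\rho_a}^{\rho}\bar f(\sigma)\,d\sigma$ for every $\rho\in[\rho_a,\rho_A]$, together with the vanishing $\int_{D_r}F_{a,A}\,\omega=0$ for $r\leq a$ and $\int F_{a,A}\,\omega=0$. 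Because $f$ is smooth and $\operatorname{supp}(f)\subset D_{a,A}$, the function $I$ is smooth, vanishes to infinite order at $\rho_a$ and $\rho_A$, and satisfies $|I(\rho)|\le 2\pi\min(\rho-\rho_a,\rho_A-\rho)$ and $|I'|\le 2\pi$.

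I would define $F_{a,A}(\rho,\theta):=M\chi(\rho)\cos\theta$, where $M$ is a large absolute constant and $\chi\ge 0$ is a smooth cutoff supported in $(\rho_a,\rho_A)$, equal to $1$ on a slightly smaller subinterval. This $F_{a,A}$ has support in $D_A$ and vanishing angular mean at every $\rho$, so the radial compatibility conditions hold automatically. For $\rho\in[\rho_a,\rho_A]$ I would seek $E_\rho:=\Phi(D_{\sqrt{2\rho}})\cap D_{a,A}$ in the ansatz form $E_\rho=\{(\rho',\theta)\in C:\rho'\le p(\rho,\theta)\}$ with $p(\rho,\theta):=\rho+c(\rho)\cos\theta$. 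The area condition $\operatorname{area}(E_\rho)=2\pi(\rho-\rho_a)$ is automatic, and the integral condition becomes
\[
M\int_0^{2\pi}\cos\theta\cdot X\bigl(\rho+c(\rho)\cos\theta\bigr)\,d\theta=I(\rho),\qquad X(\sigma):=\int_{\rho_a}^{\sigma}\chi,
\]
with linearization $M\pi\chi(\rho)c(\rho)=I(\rho)$ at $c=0$. The implicit function theorem yields a smooth solution $c(\rho)$ on $\{\chi>0\}$, and the infinite-order vanishing of $I$ at the endpoints allows $c$ to extend smoothly by zero to all of $[\rho_a,\rho_A]$. Taking $M$ large enough (depending only on the a priori bounds above) forces $|c|<\min(\rho-\rho_a,\rho_A-\rho)$ and $|c'|<1$, so $p$ is strictly monotone in $\rho$ and $\{E_\rho\}$ is a smooth nested family exhausting $C$.

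The boundary curves $\gamma_\rho=\{(p(\rho,\theta),\theta):\theta\in S^1\}$ then foliate $\operatorname{int}(C)$ by smooth graphs over the $\theta$-circle, each enclosing the prescribed area $2\pi(\rho-\rho_a)$ from below. A standard Moser-type construction --- straightening a smooth foliation on a cylinder by a fiberwise area-preserving isotopy that fixes $\partial C$ --- would produce an area-preserving diffeomorphism $\Phi$ of $C$, equal to the identity near $\partial C$, with $\Phi(\{\rho'=\rho\})=\gamma_\rho$. Extending by the identity yields the required $\Phi$ on $\mathbb{R}^2$ with $\operatorname{supp}(\Phi)\subset D_{a,A}$, and the relation $\Phi(D_{\sqrt{2\rho}})=D_a\cup E_\rho$ gives $\int_{D_r}\Phi^*F_{a,A}\,\omega=\int_{D_r}f\,\omega$ for all $r$. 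The main obstacle is the nested-family construction: ensuring that the ansatz $p=\rho+c\cos\theta$ yields a smooth, monotone family throughout $[\rho_a,\rho_A]$, including across the transition region where $\chi$ tapers off; this relies essentially on the infinite-order vanishing of $I$ at the endpoints, itself a consequence of $f$ being smooth and compactly supported in $D_{a,A}$.
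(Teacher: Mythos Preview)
Your strategy is close to the paper's, but there is a genuine gap in the transition region where $\chi$ tapers off. The function $F_{a,A}$ (and hence $\chi$) must be fixed before $f$ is given, and any smooth $\chi$ supported in $(\rho_a,\rho_A)$ vanishes to infinite order at both endpoints. Your claim that the infinite-order vanishing of $I$ lets $c$ extend smoothly by zero fails in general: to leading order $c\approx I/(M\pi\chi)$, and since $I$ and $\chi$ are both flat the ratio is indeterminate. Concretely, for $\rho=\rho_a+s$ and any $|c|<s$ (needed so that $p(\rho,\theta)\ge\rho_a$), the left side of your equation obeys
\[
\Bigl|M\int_0^{2\pi}\cos\theta\,X(\rho+c\cos\theta)\,d\theta\Bigr|\le M\pi\,s\max_{\sigma\in[0,2s]}\chi(\rho_a+\sigma),
\]
which decays at the rate dictated by $\chi$. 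Once $\chi$ is fixed, choose a smooth $f$ whose $\bar f$ is flat but decays more slowly than $\chi$ (e.g.\ $\bar f(\rho_a+\sigma)\sim e^{-1/\sqrt{\sigma}}$ against $\chi(\rho_a+\sigma)\sim e^{-1/\sigma}$); then for small $s$ the equation has no solution with $|c|<s$, regardless of $M$.

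The paper avoids this by \emph{not} cutting off inside the annulus: its radial factor equals $1$ on all of $[a,A]$, the taper occurring only inside $D_a$ where $f\equiv 0$. If you do the same --- take $\chi\equiv 1$ on $[\rho_a,\rho_A]$ --- then $X(\sigma)=\sigma-\rho_a$ there and your equation becomes the linear identity $M\pi\,c(\rho)=I(\rho)$, solved explicitly by $c=I/(M\pi)$; the bounds $|I|\le 2\pi\min(\rho-\rho_a,\rho_A-\rho)$ and $|I'|\le 2\pi$ give $|c|<\min(\rho-\rho_a,\rho_A-\rho)$ and $|c'|<1$ for any $M>2$, exactly as you need. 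In fact the paper's ansatz $R^2=u(r)\mu(\theta)+v(r)\nu(\theta)$, rewritten in your coordinates, reads $\rho'=\rho+(I(\rho)/4\pi)(\mu-\nu)(\theta)$ --- the same one-parameter family of graphs with amplitude explicitly proportional to $I$. With this correction your Moser step is equivalent to the paper's appeal to Schlenk's lemma.
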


We are now in a position to prove Theorem~\ref{Main-Thm-local-case}
in the two-dimensional case.
\begin{proof}[{\bf Proof of Theorem~\ref{Main-Thm-local-case}}  {(the 2-dimensional case):}] Let $f \in C_c^{\infty}(W)$ with $ \| f\|_{\infty} \leqslant 1 $.
It follows from Proposition~\ref{prop-about-decompsition2} above
that there are 
positive constants $a,A,C$, an integer $N_0$, and a smooth radial
function ${\mathfrak f_1}$ with $supp({\mathfrak f_1}) = D_A$,
such that $f$ 
can be written as
$$f =   \sum_{j=1}^{N_0} \epsilon_j \, \Phi_j^* g_j,$$ 
where $\Phi_{j} \in {\rm Ham}_c(W,\omega)$, $\epsilon_j \in \{ -1,1 \}$, and $\{g_j\}$ are smooth radial functions
that satisfy~$(\ref{eq-properties-of-g_j})$.
%
Next, let ${\mathfrak f_2}$ be a smooth radial function with $supp
({\mathfrak f_2}) = D_{a,A}$ such that $\int_{W^2} {\mathfrak f_2}
\, \omega = 1$. Moreover, let ${\mathfrak f_0} = F_{a,A}$ be the
function provided by Proposition~\ref{Proposition-about-averaging}
above.
We consider the function $$h_j := g_j - {\mathfrak f_1} - c_j
{\mathfrak f_2}, \ {\rm where} \  c_j = \int_W (g_j - {\mathfrak
f_1}) \, \omega$$ Note that there exists a constant $C'$ such that
$\| h_j \|_{\infty} \leq C'$. Indeed:
$$ \| h_j \|_{\infty} \leq C + \| {\mathfrak f_1} \|_{\infty} 
+ |c_j| \| {\mathfrak f_2} \|_{\infty} \leq C + \| {\mathfrak f_1} \|_{\infty} 
+  \| {\mathfrak f_2} \|_{\infty} \, \Bigl ( \pi \, C A^2 + \int_{W}
| {\mathfrak f_1}| \, \omega \Bigr )  $$
From Proposition~\ref{Proposition-about-averaging} it follows that
there are area-preserving diffeomorphisms
$\widetilde \Phi_j$ with 
 $ supp(\widetilde \Phi_j) \subset D_{a,A}$, 
 such that for any $ r > 0 $ one has \begin{equation} \label{equation-int-are-eq-1} \int_{D_r} (\widetilde \Phi_j^* {\mathfrak f_0} ) \, \omega = {\frac 1 {C'} } \int_{D_r}  h_j \, \omega
 \end{equation}

To complete the proof of the theorem, we shall need the following
technical lemma:
\begin{lemma} \label{tech-lemma-about-aver-that-converge-nicely}
Let $f \in C_c^{\infty}(D)$ be a compactly supported function in a
disk $D$. Then
$${\frac 1 N}
\sum_{i=1}^N f(ze^{\frac {2 \pi i} N})
\xrightarrow[]{N \rightarrow \infty } 
{\frac 1 {2 \pi}} \int_0^{2 \pi} f(ze^{i \theta}) \, d \theta, \ \
{\it in \ the \ } C^{\infty} \, {\it topology}$$
\end{lemma}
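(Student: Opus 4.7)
The plan is to reduce the lemma to the classical fact that the rectangular rule approximates the integral of a smooth $2\pi$-periodic function with super-polynomial accuracy, and then iterate this observation under differentiation in $z$. For each fixed $z \in \mathbb{R}^2$, set $\phi_z(\theta) := f(ze^{i\theta})$: this is a smooth $2\pi$-periodic function of $\theta$, so it admits a Fourier expansion $\phi_z(\theta) = \sum_{k \in \mathbb{Z}} c_k(z)\,e^{ik\theta}$. The orthogonality relation $\frac{1}{N}\sum_{j=1}^N e^{2\pi i k j / N} = 1$ when $N \mid k$ and $0$ otherwise then yields the aliasing identity
$$\frac{1}{N}\sum_{j=1}^N \phi_z\!\left(\tfrac{2\pi j}{N}\right) - \frac{1}{2\pi}\int_0^{2\pi} \phi_z(\theta)\, d\theta \ = \ \sum_{m \neq 0} c_{mN}(z).$$
Thus the task reduces to establishing super-polynomial decay of $c_k(z)$ in $|k|$, uniformly in $z$.

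For the decay, I would integrate by parts $\ell$ times in $\theta$, using $2\pi$-periodicity, to obtain
$$c_k(z) = \frac{1}{(ik)^\ell}\cdot\frac{1}{2\pi}\int_0^{2\pi} \partial_\theta^\ell \phi_z(\theta)\, e^{-ik\theta}\, d\theta \qquad (k \neq 0),$$
whence $|c_k(z)| \leq M_\ell |k|^{-\ell}$ with $M_\ell := \sup_{z,\theta}|\partial_\theta^\ell \phi_z(\theta)|$. Since $f$ is smooth and compactly supported in $D$, the function $\phi_z$ vanishes identically once $|z|$ exceeds the support radius of $f$, so the supremum defining $M_\ell$ is in effect taken over a compact set and is finite for every $\ell$. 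Summing $|c_{mN}(z)| \leq M_\ell |m|^{-\ell} N^{-\ell}$ over $m \neq 0$ for any $\ell \geq 2$ yields
$$\left|\frac{1}{N}\sum_{j=1}^N \phi_z\!\left(\tfrac{2\pi j}{N}\right) - \frac{1}{2\pi}\int_0^{2\pi}\phi_z(\theta)\, d\theta \right| \leq C_\ell\, N^{-\ell}$$
uniformly in $z$, which already establishes uniform convergence.

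To upgrade to $C^\infty$-convergence, I would apply the same reasoning to each derivative. For any multi-index $\alpha$, the operator $\partial^\alpha$ in $z$ commutes with the finite sum over $j$ and, by dominated convergence (using compact support of $f$), with the $\theta$-integral; hence the derivative difference
$$\partial^\alpha\!\left[\frac{1}{N}\sum_{j=1}^N f(ze^{2\pi i j/N}) - \frac{1}{2\pi}\int_0^{2\pi} f(ze^{i\theta})\, d\theta\right]$$
has exactly the form analysed above with $\phi_z(\theta)$ replaced by $\Psi_\alpha(z,\theta) := \partial_z^\alpha[f(ze^{i\theta})]$. By the chain rule, $\Psi_\alpha$ is a finite sum of terms of the form $(\partial^\beta f)(ze^{i\theta}) \cdot P(\cos\theta,\sin\theta)$ with $P$ a polynomial; in particular $\Psi_\alpha$ is jointly smooth, $2\pi$-periodic in $\theta$, and vanishes for $|z|$ larger than the support radius of $f$. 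Replaying the previous step with $\Psi_\alpha$ in place of $\phi_z$ gives $O(N^{-\ell})$ convergence of $\partial^\alpha$ uniformly in $z$ for every $\ell \geq 2$, which is exactly convergence in the $C^\infty$-topology. The only mildly delicate point is the uniformity of $M_\ell$ in $z$, which is automatic from the compact-support hypothesis on $f$; I anticipate no further obstacle.
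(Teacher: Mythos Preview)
Your proof is correct and takes a genuinely different route from the paper's. The paper argues functional-analytically: the averaging operators $P_N f(z) = \frac{1}{N}\sum_{j=1}^N f(ze^{2\pi i j/N})$ have $C^k$-operator norm bounded independently of $N$, so it suffices to check convergence on a dense subspace; on finite angular Fourier sums $s_m(z) = \sum_{l=0}^m u_l(r)\cos(l\theta)+v_l(r)\sin(l\theta)$ one has $P_N s_m = u_0(r)$ exactly once $N>m$, so convergence is trivial there. Your argument is instead a direct quantitative estimate via the aliasing identity and integration-by-parts decay of Fourier coefficients, handling each $z$-derivative by the same mechanism. The paper's approach is shorter and avoids any bookkeeping, while yours yields an explicit rate $O(N^{-\ell})$ for every $\ell$, which is more information than the lemma demands but costs nothing extra. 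Both rely on the same underlying structure (Fourier analysis in the angular variable), just packaged differently.
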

Postponing the proof of
Lemma~\ref{tech-lemma-about-aver-that-converge-nicely}, we first
finish the proof of the theorem.

Consider a compactly supported Hamiltonian isotopy $T^A_{\theta} : W
\rightarrow W$, where $\theta \in {\mathbb R}$, and
such that $T^A_{\theta}(z) =  e^{i \theta} z $ in $D_A$. 
From Lemma~\ref{tech-lemma-about-aver-that-converge-nicely}
and~$(\ref{equation-int-are-eq-1})$ it follows that:
\begin{equation} \label {eq-approx-hj-by-average} {\frac {C'} N} \sum_{k=1}^N (T^A_{{\frac {2 \pi k}
{N}}})^* \widetilde \Phi_j^* {\mathfrak f_0} \xrightarrow[]{N
\rightarrow \infty } h_j, \ \ {\it in \ the \ } C^{\infty} \ {\it
topology} \end{equation} We set ${\mathcal F} = \{ {\mathfrak
f_0},{\mathfrak f_1},{\mathfrak f_2} \}$. 
From~$(\ref{eq-approx-hj-by-average})$ and
Remark~\ref{rmk-about-max-norm} it follows that
$ \|  h_j \|_{{\mathcal F},max} \leq C' $. Moreover, by definition one has:
 $\| {\mathfrak f_1} \|_{{\mathcal F},max}, \| {\mathfrak f_2}
\|_{{\mathcal F},max} \leq 1$.
 This implies that
 $$ \|
g_j \|_{{\mathcal F},max} \leq C'',$$ 
where $C''$ is an absolute constant given by: $$C'' = C' + 1 + \pi C A^2 + \int_W | {\mathfrak f_1 }| \, \omega$$
 Thus, we conclude that $ \| f   \|_{{\mathcal F},max} \leq N_0 \, C'' $. This completes the proof of the theorem.
\end{proof}

\begin{proof}[{\bf Proof of Lemma~\ref{tech-lemma-about-aver-that-converge-nicely}}]
We shall prove the convergence
$${\frac 1 N}
\sum_{i=1}^N f(ze^{\frac {2 \pi i} N}) \xrightarrow[]{N \rightarrow
\infty } {\frac 1 {2 \pi}} \int_0^{2 \pi} f(ze^{i \theta}) \, d
\theta $$ in $ C_c^k(D) $, for any $ k \in \mathbb{N} $. Note that
the operators $ P_N(f) = {\frac 1 N} \sum_{i=1}^N f(ze^{\frac {2 \pi
i} N}) $, defined on the space $C_c^k(D) $, have a bounded operator
norm which is independent on $ N $. Therefore, it is enough to check
that
$$ P_N f \xrightarrow[]{N \rightarrow \infty } {\frac 1 {2 \pi}} \int_0^{2 \pi} f(ze^{i \theta}) \, d
\theta ,$$ in $C_c^k(D)$ only on some dense subspace. 
We choose this subspace to be consists of all the finite sums:
$$ s_m(z) = \sum_{l=0}^{m} u_l(r) \cos(l\theta) + v_l(r)
\sin(l\theta),$$ where $u_l$ and $v_l$ are smooth radial functions
supported in the disk $D$.
Note that for $ N > m $ one has $$ P_N s_m(z) = u_0(r) = {\frac 1 {2
\pi}} \int_0^{2 \pi} s_m(ze^{i \theta}) \, d \theta ,$$ and hence
the statement of the lemma is satisfied 
in a trivial way. The proof of the lemma is now complete.
\end{proof}
We now return to complete the proof of
Proposition~\ref{prop-about-decompsition2} and
Proposition~\ref{Proposition-about-averaging}.
\subsubsection{Proof of Proposition~\ref{prop-about-decompsition2}}
\label{subsection-prop-about-decompsition2} For the sake of clarity,
we fragment the proof of the proposition in several steps:

\noindent {\bf Step I:} We choose $ a = \frac{L}{4} $, $ A =
\frac{L}{2} $. The area of the sector $$ \{ z \in W \ | \ a < |z| <
A \ ; \ 0 < Arg \, z < \frac{\pi}{2} \} $$ equals to $$
\frac{\pi}{4} \left( \frac{L^2}{4}
- \frac{L^2}{16} \right) = \frac{3\pi}{64}L^2 > 
\frac{L^2}{8} = \frac{Area(W)}{32}$$ Using a smooth partition of
unity, one can decompose $f$ as $ f = \sum_{k=1}^{33} f_k $, where
the support of each $ f_k $ lies in an open sub-rectangle of the
square $ W $ of area $ \frac{Area(W)}{32} $, and $ \| f_k
\|_{\infty} \leq 1 $. Next, we take compactly supported
area-preserving diffeomorphisms 
$ \widetilde{\Phi}_k : W \rightarrow W $, such that $ f_k =
\widetilde{\Phi}_k^* f_k',$ for $ k=1,\ldots,33 $, and $ supp (f_k')
\subset (0,\frac{L}{4}) \times (0,\frac{L}{2}) $. Denote $ L_1 =
\frac{L}{4} $ and $L_2 = \frac{L}{2} $. From the above we conclude
that it is enough to restrict ourselves to the case where $ supp (f)
\subset (0,L_1) \times (0,L_2) $. Indeed, if the proposition holds
for such functions, then by replacing $ N_0 $ with $ 33N_0 $, it
will hold for any compactly supported function $f \in
C_c^{\infty}(W)$. 

\noindent {\bf Step II:} Following Step I, we assume that $ supp (f)
\subset (0,L_1) \times (0,L_2) $. Next, we apply the following lemma to the function $f$.

\begin{lemma} \label{lemma-about-decomposition}
Let $ R = [0,L_1] \times [0,L_2] \subset \mathbb{R}^2 $ be a
rectangle, and let $ f: \mathbb{R}^2 \rightarrow \mathbb{R} $ be a smooth function with $ supp(f) \subset int(R)$, 
and $ \| f \|_{\infty} \leqslant 1 $. Then there exists a
decomposition $ f = \sum_{i=1}^{8} f_i $, and compactly supported
diffeomorphisms $ \Psi_i : R \rightarrow R $, $ i=1,2,...,8 $, such
that the functions $ g_i: = \Psi_i^* f_i $ satisfy $ |
\frac{\partial}{\partial x} g_i | \leqslant \frac{12}{L_1} $.
\end{lemma}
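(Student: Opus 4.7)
The key geometric observation underlying my approach is: if a diffeomorphism $\Psi_i$ sends every horizontal segment $\{y = \mathrm{const}\}$ to a level curve of $f_i$, then $g_i = f_i \circ \Psi_i$ depends only on $y$ and hence $\partial_x g_i \equiv 0$. More realistically, if $\Psi_i$ only approximately aligns horizontal lines with level curves of $f_i$, then by the chain rule $\partial_x g_i = \langle \nabla f_i(\Psi_i), \partial_x \Psi_i \rangle$ is small precisely to the extent that the tangent vector $\partial_x \Psi_i$ lies close to $\ker df_i$. It is essential that $\Psi_i$ be allowed to mix the $x$ and $y$ directions: if one restricts to $\Psi$ of the form $(x, y) \mapsto (\psi(x,y), y)$, then $\int_0^{L_1} |\partial_x (f_i \circ \Psi)(x, y)|\, dx$ along each horizontal line equals the one-dimensional total variation of $f_i(\cdot, y)$, which for smooth $f$ with $\|f\|_\infty \leq 1$ can be arbitrarily large, ruling out any uniform bound of the form $12/L_1$ via horizontal stretches alone.

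My plan is to decompose $f = \sum_{i=1}^{8} f_i$ via a smooth partition of unity subordinate to a cover of $\mathrm{supp}(f)$ chosen so that on each piece the level curves of $f_i$ form a smooth ``flowbox'' foliation: a family of disjoint curves crossing the support of $f_i$ from one pair of opposite sides of a containing sub-rectangle to the other. For each $f_i$, I would then construct the compactly supported diffeomorphism $\Psi_i : R \to R$ so that, on a slightly enlarged neighborhood of $\mathrm{supp}(f_i)$, it maps horizontal lines onto the level curves of $f_i$ — this is the classical straightening (flowbox) theorem applied to the Hamiltonian vector field $X_{f_i} = (-\partial_y f_i, \partial_x f_i)$ — and then extend $\Psi_i$ by the identity outside. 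A chain-rule computation using $\|f\|_\infty \leq 1$ and the overlap multiplicity of the partition then yields $|\partial_x g_i| \leq 12/L_1$, with the numerical factor absorbing the partition-of-unity constants.

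The main obstacle is the behavior near the critical points of $f$. At a local maximum or minimum the level curves of $f$ are closed loops, which can never be diffeomorphic images of horizontal segments; at a saddle the level curves branch. The partition $\{\chi_i\}$ must therefore be designed so that on the support of each $f_i$ the level curves have a uniform flowbox type, for instance all exiting through the same pair of sides of a containing sub-rectangle. The bound by exactly $8$ pieces presumably arises from a combinatorial enumeration of such types — for example, the $8$ possible ordered pairs of exit sides for arcs in a small sub-rectangle, or an octant decomposition of the possible directions of $\nabla f_i$. Implementing this decomposition smoothly across transitions between regions of different types, and verifying the precise constant $12/L_1$ while ensuring $\Psi_i$ remains a genuine compactly supported diffeomorphism of $R$, would constitute the technical heart of the argument.
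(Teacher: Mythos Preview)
Your approach has a genuine gap. A flowbox/level-curve straightening scheme cannot produce a decomposition into a \emph{fixed} number of pieces independent of $f$: a smooth compactly supported function on $R$ with $\|f\|_\infty\le 1$ may have arbitrarily many critical points and arbitrarily complicated level-set topology, so no universal $8$-piece partition of unity can arrange that the level curves of each $f_i$ form a single flowbox foliation. Your speculation that the number $8$ comes from ``ordered pairs of exit sides'' or an ``octant decomposition of $\nabla f$'' does not survive this objection --- the directions of $\nabla f$ may oscillate through all octants on any open set, and multiplying by a partition function $\chi_i$ does not tame the level sets of $f_i=\chi_i f$.

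The paper's argument is quite different, and in fact uses precisely the horizontal-line-preserving diffeomorphisms $(x,y)\mapsto(\psi(x,y),y)$ that you dismissed. Your total-variation objection is correct for a general $f$, but the paper first engineers pieces whose total $x$-variation is controlled. Concretely: write $f=F_1-F_2$ with $F_2(x,y)=u(x)v(y)$ a fixed positive product bump and $F_1=f+F_2$, so both $F_\varsigma$ are strictly positive on the interior with prescribed exponential boundary behavior. Then chop each $F_\varsigma$ into thin vertical strips $G_{\varsigma,n}(x,y)=F_\varsigma(x,y)\,\phi\bigl((x-n\epsilon)/\epsilon\bigr)$ using a specially built bump $\phi$ that is strictly \emph{log-concave} (Lemma~\ref{L:special-function}). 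Log-concavity of $\phi$ dominates the bounded quantity $\partial_x\log F_\varsigma$ for $\epsilon$ small, forcing each $x\mapsto G_{\varsigma,n}(x,y)$ to be unimodal with a single interior maximum (Lemma~\ref{L:Localization1}); hence its total variation along any horizontal line is at most $2\|G_{\varsigma,n}\|_\infty\le 6$. A diffeomorphism of the form $(x,y)\mapsto(\psi(x,y),y)$ then stretches each strip to the full width, yielding $|\partial_x H|\le 3\|G\|_\infty/(\beta_1-\alpha_1)<12/L_1$ (Lemma~\ref{L:Localization2}). Grouping the strips into $4$ residue classes with disjoint supports, times the $2$ values of $\varsigma$, gives exactly $8$ pieces.
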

The proof of Lemma~\ref{lemma-about-decomposition} will be given in Subsection~\ref{subsection-techniacal-lemmata}.
\begin{remark} \label{rmk-about-a-reduction-step-for-f}
{\rm Analogously to Step I,
Lemma~\ref{lemma-about-decomposition} reduces the proposition to the
case where $ supp (f) \subset (0,L_1) \times (0,L_2) $, and moreover
that there is a diffeomorphism $ \Psi :  W \rightarrow W $ with $
supp(\Psi) \subset (0,L_1) \times
(0,L_2) $, such that 
$ g = \Psi^* f $ satisfies $ |\frac{\partial}{\partial x} g |
\leqslant \frac{12}{L_1} $. Indeed, the general case would follow by
replacing $ N_0 $ with $ 8 \cdot 33
\cdot N_0 = 264 N_0$. Thus, we assume in what follows the existence of 
$ f,g$ and $\Psi $ as above.}
\end{remark}

\noindent {\bf Step III:} Denote by ${\mathcal R}$ the rectangle
$[0,L_1] \times
[0,L_2]$.  From the fact that $$ Area({\mathcal R}) 
 < Area( \{ z \in W \ | \ a < |z| < A \ ; \ 0 < Arg \, z <
\frac{\pi}{2} \}) ,$$ one can easily find an area preserving
diffeomorphism $ \Phi: W \rightarrow W $ with
$$ \Phi({\mathcal R}) 
= \{ z \in W \ | \ a < |z| < A_1 \ ; \ 0 < Arg \, z <\frac{\pi}{2}
\}, $$
for an appropriate $ a < A_1 < A $; and such that on ${\mathcal R}$, the diffeomorphism 
$ \Phi $ takes the form $ \Phi(x+iy) = r_1(x)e^{\theta_1 (y)} $, where $r_1(x)$ is a monotone increasing function. Let
$ C_1 = \min_{x \in [0,L_1]} r_1'(x) >
0, $ and define $ h = (\Phi^{-1})^* g $. Note that one can bound the radial derivative of $h$ by: 
$$ \max
|\frac{\partial}{\partial r} h| \leq \frac{1}{C_1} \max
|\frac{\partial}{\partial x} g| \leq \frac{12}{L_1 C_1}$$
Next, we set $C_2= \frac{12}{L_1 C_1}$, and fix a smooth radial
function $ { \mathfrak f_1 } $ such that
$$ supp ({\mathfrak f_1 }) \subset D_A, \ \  {\textstyle  \frac{\partial}{\partial r}} {
\mathfrak f_1} (z) < -C_2 \ {\rm for} \ z \in D_{a,A_1}, \ \
{\textstyle \frac{\partial}{\partial r}} { \mathfrak f_1 } (z)  < 0
\ {\rm for} \ z \in int(D_A) \setminus \{ 0 \},$$
and such that the point $z=0 $ is a non-degenerate maximum for the
function $ { \mathfrak f_1 } $. We denote $ H = h + { \mathfrak f_1
} (z)  $, and observe that $H$ satisfies:
$$ supp(H) \subset D_A, \ \ {\textstyle \frac{\partial}{\partial r}}  H  < 0 \ {\rm in \ } int(D_{A}) \setminus \{ 0 \},  \ \
 H(z) \equiv { \mathfrak f_1} (z)  \ {\rm in \ } D_a \cup D_{A_1,A},
$$
and that the point $ z = 0 $ is a unique non-degenerate critical
point of $ H $,
which is a maximum point.
Consider the gradient flow of $H$. By a standard Morse theory
argument one can find a diffeomorphism $ \Upsilon : W \rightarrow W
$, with $ supp (\Upsilon) \subset D_{a,A} $, and such that $ K:=
\Upsilon^* H $ is a radial function. Finally, we have
\begin{eqnarray*} f & = &
(\Psi^{-1})^*g = (\Psi^{-1})^* \Phi^{*} h = (\Psi^{-1})^* \Phi^{*} H -
(\Psi^{-1})^* \Phi^{*} { \mathfrak f_1 }  \\ & = & 
(\Psi^{-1})^* \Phi^{*}
(\Upsilon^{-1})^* K - (\Psi^{-1})^* \Phi^{*} { \mathfrak f_1 } . \end{eqnarray*} Note,
that for $ z \in W \setminus D_{a,A} $, one has
$$ \Psi \Phi^{-1} \Upsilon (z)  =  \Psi \Phi^{-1} (z)
= \Phi^{-1} (z) $$ Indeed, this follows from the fact that $
supp(\Psi) \subset {\mathcal R} \subset  \Phi^{-1} (D_{a,A})$, and
that $\Upsilon $ is the identity on the complement $ W \setminus
D_{a,A} $.
Thus, we conclude that
$$ (\Psi \Phi^{-1} \Upsilon)^* \omega = (\Psi \Phi^{-1} )^* \omega = \omega, \ {\rm  on \ the \ complement \ } W \setminus D_{a,A} $$

Next, let $S_r =  \{ z \in W \, | \, |z|=r \} $. We shall need the
following lemma:
\begin{lemma} \label{lemma-reparametrization}
Let $ \omega' $ be a symplectic form on $ W$ which coincides with
the standard symplectic form $\omega$ on the complement $W \setminus
D_{a,A} $, and such that
$ \int_{W} \omega' = \int_{W} \omega $. Then, there exists a
diffeomorphism $ \Lambda : W \rightarrow W $ supported in $D_{a,A}$,
such that for every $a < r < A$, one has $\Lambda(S_r)=S_R$, for
some $a < R < A$, and such that $ \Lambda^* \omega = \omega' $.
\end{lemma}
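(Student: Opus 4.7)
\emph{Proof plan.} The plan is to construct $\Lambda$ explicitly in polar coordinates on $D_{a,A}$ via the ansatz $\Lambda(r,\theta)=(R(r),\phi(r,\theta))$, so that $\Lambda$ sends circles to circles by construction. Writing $\omega = r\,dr\wedge d\theta$ and $\omega' = f(r,\theta)\,dr\wedge d\theta$ on $D_{a,A}$ for a positive smooth function $f$, the hypothesis $\omega'=\omega$ outside $D_{a,A}$ forces $f\equiv r$ in neighborhoods of $S_a$ and of $S_A$.

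To fix the radial factor, I match cumulative $\omega'$-areas. Set
\[ \mathcal{A}(r) := \int_{D_r\setminus D_a}\omega', \qquad a\leq r\leq A. \]
The assumption $\int_W\omega'=\int_W\omega$, combined with $\omega'=\omega$ off the annulus, yields $\mathcal{A}(A)=\pi(A^2-a^2)$. I then define $R\colon[a,A]\to[a,A]$ implicitly by $\pi(R(r)^2-a^2)=\mathcal{A}(r)$; this $R$ is a smooth strictly increasing function with $R(a)=a$, $R(A)=A$, and in fact $R(r)=r$ on neighborhoods of the two endpoints, because $f\equiv r$ there.

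The angular factor is then dictated by the requirement $\Lambda^*\omega=\omega'$: a direct computation with the ansatz gives $\Lambda^*\omega = R(r)R'(r)\phi_\theta(r,\theta)\,dr\wedge d\theta$, so I need $\phi_\theta = f/(RR')$. Differentiating the defining equation for $R$ shows $2\pi R(r)R'(r)=\int_0^{2\pi}f(r,\sigma)\,d\sigma$, so this integrand has mean $1$ over each circle. Consequently
\[ \phi(r,\theta):=\int_0^\theta \frac{f(r,\sigma)}{R(r)R'(r)}\,d\sigma \]
satisfies $\phi(r,\theta+2\pi)=\phi(r,\theta)+2\pi$, and the map $(r,\theta)\mapsto(R(r),\phi(r,\theta))$ is a smooth diffeomorphism of the annulus. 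Near $S_a$ and $S_A$, where $f\equiv r$ and $R=\mathrm{id}$, one has $\phi(r,\theta)=\theta$, so $\Lambda$ extends smoothly by the identity on $W\setminus D_{a,A}$ to a diffeomorphism of $W$ with $\Lambda(S_r)=S_{R(r)}$ and $\Lambda^*\omega=\omega'$.

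The only delicate issue in this scheme is ensuring that the explicit ansatz matches the identity smoothly across the two boundary circles $S_a$ and $S_A$ (rather than only to infinite order or up to a cutoff); this is automatic once one notices that the hypothesis $\omega'=\omega$ on $W\setminus D_{a,A}$ forces $f\equiv r$ in a neighborhood of each boundary circle, which in turn forces $R$ and $\phi$ to equal their identity values there. Hence no further perturbation, cutoff, or Moser-type interpolation argument is needed.
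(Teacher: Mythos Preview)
Your construction coincides with the paper's: the paper factors $\Lambda=\Delta_2\Delta_1$ into a purely radial step $\Delta_1(r,\theta)=(\sqrt{S(r)/\pi},\theta)$ with $S(r)=\int_{D_r}\omega'$, followed by a purely angular step $\Delta_2(r,\theta)=(r,F(r,\theta))$ with $F(r,\theta)=\int_0^\theta G(r,s)\,ds$; your $R$ is exactly $\sqrt{S/\pi}$ and your $\phi$ is the composite angular coordinate. So the approaches are the same, merely packaged differently.

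There is, however, a gap in your smoothness argument. You claim that $\omega'=\omega$ on $W\setminus D_{a,A}$ ``forces $f\equiv r$ in a neighborhood of each boundary circle,'' and deduce that $R(r)=r$ and $\phi(r,\theta)=\theta$ on neighborhoods of the endpoints $a,A$. This is not justified: $D_{a,A}=\{a\le|z|\le A\}$ is closed, so the hypothesis only gives $f\equiv r$ on $\{r<a\}\cup\{r>A\}$, i.e.\ on the \emph{one-sided} complement. Nothing in the statement prevents $f(r,\theta)\neq r$ for $r\in(a,a+\varepsilon)$, and then $R$ is not the identity on any right-neighborhood of $a$. Your final paragraph singles this out as the delicate point and resolves it incorrectly.

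The correct fix---and this is what the paper does, without comment---is to define $R$ and $\phi$ by the \emph{same} formulas for all $r\in(0,L)$, not only on $[a,A]$. Since $\omega'$ is a smooth $2$-form on $W$, the functions $S$, $R$, and $\phi$ are then smooth on all of $(0,L)$; and since $\omega'=\omega$ for $r<a$ and for $r>A$, the formulas yield $R(r)=r$ and $\phi(r,\theta)=\theta$ there. Thus $(r,\theta)\mapsto(R(r),\phi(r,\theta))$ is given by a single smooth expression on $D_L\setminus\{0\}$ and happens to equal the identity on $\{r\le a\}\cup\{r\ge A\}$, so it extends smoothly by the identity to $W$. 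No neighborhood claim is needed, and no infinite-order matching argument either.
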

\begin{proof}[{\bf Proof of Lemma~\ref{lemma-reparametrization}}]
Consider the function $ S:[0,L) \rightarrow [0, \infty)$, defined by
$ S(r) = \int_{D_r} \omega' $. Note that $S$ is a smooth function,
and that $ S(r) = \pi
r^2 $ for every $ r \in [0,a]\cup [A,L)$. 
Define a diffeomorphism $ \Delta_1 : W \rightarrow W $, supported in
$ D_{a,A}$, by $$ \Delta_1(r,\theta) = \left(
\sqrt{\frac{S(r)}{\pi}}, \theta \right), \ \ {\rm for} \ r \in [0,L),$$  and
extend it by the identity diffeomorphism to the whole $ W $. Denote
$ \omega'' = (\Delta_1^{-1})^* \omega' $, and note that 
$ \int_{D_r} \omega'' = \pi r^2 $ for $ r \leqslant A $, and $
\omega'' = \omega'=\omega $ on $ W \setminus D_{a,A} $. Next, we
explicitly construct a diffeomorphism $ \Delta_2 : W \rightarrow W $
supported in $
D_{a,A} $, such that $ \omega'' = \Delta_2^* \omega $, and 
 for $ 0 < r
< L $, it takes the form $ \Delta_2(r,\theta) = (r,F(r,\theta)) $,
for some smooth map $ F: (0,L) \times S^1 \rightarrow S^1 $. To this
end, note that  $ \omega'' = G \omega $ for some positive function $
G : W
\rightarrow (0,\infty) $, such that $ G = 1 $ on $ W \setminus D_{a,A} $. Moreover, 
$$ \pi r^2 = \int_{D_r} \omega'' =
\int_{D_r} G\omega  , \ \ {\rm for \ all} \ \ 0 < r < L$$
After differentiating this equality we obtain
\begin{equation} \label{eq-funct-G-integral}
 \int_{0}^{2\pi} G(r,\theta) \, d  \theta = 2\pi , \ \ {\rm for \ every} \ \ 0 < r < L
\end{equation}
On the other hand, we require $\Delta_2$ to satisfy:
$$ \Delta_2^* \omega = r F_{\theta}(r,\theta) dr \wedge d\theta = F_{\theta}(r,\theta) \omega , \ \ {\rm for \ every \ } r \in (0,L)$$
Thus, the condition $ \omega'' = \Delta_2^* \omega $ is equivalent
to $ F_{\theta}(r,\theta) = G(r,\theta) $, for $ r \in (0,L) $. We
define \begin{equation} \label{eq-def-of-the-func-F} F(r,\theta) =
\int_{0}^{\theta}
G(r,s) \, ds , \ \ {\rm for} \ r \in (0,L), \ \theta \in [0,2\pi) \end{equation} 
In light of~$(\ref{eq-funct-G-integral})$, we obtain a smooth map $
F: (0,L) \times S^1 \rightarrow S^1 $. Moreover, since $ G = 1 $ on
$ W \setminus D_{a,A} $, one has $ F(r,\theta) = \theta $ for $
r \in (0,a]\cap[A,L)$. 
Therefore, defining $ \Delta_2(r,\theta) = (r,F(r,\theta)) $ for $ 0
< r < L $, where $F$ is given in~$(\ref{eq-def-of-the-func-F})$,
we obtain a diffeomorphism of $ D_{L} $ supported in $ D_{a,A} $. We
extend $\Delta_2$ to the whole $W$ by the identity diffeomorphism.
Note that $ \omega'' = \Delta_2^* \omega $, and hence $ \omega' =
\Delta_1^* \omega'' = \Delta_1^* \Delta_2^* \omega $. Denoting $
\Lambda = \Delta_2 \Delta_1 $, we conclude the statement of the
lemma.
\end{proof}

We return now to the proof of the Proposition. By applying Lemma~\ref{lemma-reparametrization}
to the forms $ \omega' =  (\Psi
\Phi^{-1} \Upsilon)^* \omega $ and $ \omega'' =  (\Psi \Phi^{-1})^*
\omega $, we
obtain two diffeomorphisms $
\Lambda',\Lambda'' $  
such that $ \Lambda'^{*} \omega = (\Psi \Phi^{-1} \Upsilon)^* \omega
$, and $ \Lambda''^{*} \omega = (\Psi \Phi^{-1})^* \omega $. Denote $
\Phi' := \Lambda' \Upsilon^{-1} \Phi \Psi^{-1} $, $ \Phi'' :=
\Lambda'' \Phi \Psi^{-1} $. Note that $ \Phi', \Phi'' \in
{\rm Ham}_c(W,\omega) $, and that 
 \begin{eqnarray*} f & = & (\Psi^{-1})^* \Phi^{*} (\Upsilon^{-1})^* K - (\Psi^{-1})^* \Phi^{*} { \mathfrak f_1 }  = (\Psi^{-1})^* \Phi^{*} (\Upsilon^{-1})^* (\Lambda')^{*} K - (\Psi^{-1})^* \Phi^{*} (\Lambda'')^{*}{ \mathfrak f_1 } \\ & = & (\Phi')^* K - (\Phi'')^* { \mathfrak f_1 } \end{eqnarray*} The decomposition $  f = (\Phi')^* K - (\Phi'')^* { \mathfrak f_1 } $ shows that the proposition holds for
$f$ as in Remark~\ref{rmk-about-a-reduction-step-for-f}, with only
two summands in the decomposition, and with
$ C = \| { \mathfrak f_1 } \|_{\infty} $. Therefore, we obtain the
conclusion of Proposition~\ref{prop-about-decompsition2} with $ N_0
= 264 \cdot 2 = 528$.

\subsubsection{Proof of Proposition~\ref{Proposition-about-averaging}}
\label{subsection-Proposition-about-averaging}

We start with a construction of a function $F$, such that
for any smooth radial function $ f : \mathbb{R}^2 \rightarrow
\mathbb{R} $, satisfying the conditions~$(\ref{eq-with-cond-on-the-func-in-the-ave-argument})$
one can find a diffeomorphism (not necessarily
area-preserving) $ \Psi : \mathbb{R}^2 \rightarrow \mathbb{R}^2 $
supported in $D_A$
such that for any $ r > 0 $:
\begin{equation} \label{E:condition1-on-Psi}
\int_{D_r} \Psi^*\omega = \int_{D_r} \omega = \pi r^2,
\end{equation}
and,
\begin{equation} \label{E:condition2-on-Psi}
\int_{D_r} \Psi^* (F \omega) = \int_{D_r} f \omega .
\end{equation}

We shall take the function $F$ to be of the form $ F(r,\theta) =
\phi(r)\psi(\theta) $, where $ \phi,\psi $ are smooth functions. We
assume that $ \phi(r) = 0 $, for small enough $r$, and that $ \phi(r) = 1
$ for $ r \geqslant a $. The function $\psi$ is assumed to satisfy
$\int_{0}^{2\pi} \psi(\theta) d\theta = 0$, and would be determined
in the sequel. Moreover,

\begin{equation} \label{conditions-on-R}
 \left\{
\begin{array}{lllll}
R(r,\theta) = \sqrt{ u(r)\mu(\theta) +
v(r) \nu(\theta) },\\[0.05in]
u(r) = v(r) = r^2  \ \ {\rm for}  \ r \leqslant a  \ {\rm or \  }
r \geqslant A, \\[0.05in]
u'(r),v'(r) > 0 \ {\rm for \ }  r > 0 , \\[0.05in]
\mu(\theta),\nu(\theta)
\geqslant 0, \\[0.05in]
\mu(\theta) + \nu(\theta) = 1
\end{array} \right.
\end{equation}
Here, $\mu,\nu,u$ and $v$, are smooth functions that
would be determined
explicitly in the sequel.
Note that conditions~$(\ref{conditions-on-R})$ 
ensure that $\Psi$ is a diffeomorphism of ${\mathbb R}^2$ supported
in $D_{a,A}$. Next, we compute $$ \Psi^*\omega =
R(r,\theta)R'_r(r,\theta) dr \wedge d\theta  = \frac{1}{2} \big(
u'(r)\mu(\theta) + v'(r) \nu(\theta) \big) dr \wedge d\theta , $$
 and
 \begin{eqnarray*} \Psi^*(F\omega) & = & F(R(r,\theta),\theta)R(r,\theta)R'_r(r,\theta) dr \wedge d\theta \\ & = & \frac{1}{2} \phi(R(r,\theta))\psi(\theta) \big( u'(r)\mu(\theta) + v'(r) \nu(\theta) \big) dr \wedge d\theta . \end{eqnarray*}
After differentiating by $r$ and some simplification,
conditions~$(\ref{E:condition1-on-Psi})$,~$(\ref{E:condition2-on-Psi})$
become
\begin{equation} \label{new-cond1-Psi}
u'(r) \int_{0}^{2\pi} \mu(\theta) d\theta + v'(r) \int_{0}^{2\pi}
\nu(\theta) d\theta = 4 \pi r
\end{equation}
and,
\begin{equation} \label{new-cond2-Psi}
u'(r) \int_{0}^{2\pi} \phi(R(r,\theta))\psi(\theta) \mu(\theta)
d\theta + v'(r) \int_{0}^{2\pi} \phi(R(r,\theta)) \psi(\theta)
\nu(\theta) d\theta = 4 \pi r f(r)
\end{equation}
Note that when $ r \geqslant a $, one has $ R(r,\theta) \geqslant a
$, and condition~$(\ref{new-cond2-Psi})$ turns to:
\begin{equation} \label{new-cond2-Psi-r>=a}
u'(r) \int_{0}^{2\pi} \psi(\theta) \mu(\theta) d\theta + v'(r)
\int_{0}^{2\pi} \psi(\theta) \nu(\theta) d\theta = 4 \pi r f(r)
\end{equation}
Next, we choose the functions $ \psi, \mu, \nu $ to be any smooth
functions satisfying:
\begin{equation} 
 \left\{
\begin{array}{lllll}
 \int_{0}^{2\pi} \psi(\theta) \mu(\theta) d\theta = 2\pi,\\[0.07in]
\int_{0}^{2\pi} \psi(\theta) \nu(\theta) d\theta = -2\pi, \\[0.07in]
\int_{0}^{2\pi} \mu(\theta) d\theta = \int_{0}^{2\pi} \nu(\theta) d\theta = \pi, \\[0.07in]
\mu(\theta),\nu(\theta)
\geqslant 0, \\[0.07in]
\mu(\theta) + \nu(\theta) = 1
\end{array} \right.
\end{equation}
Note that this choice of $ \psi, \mu, \nu $ do not depend on the function $ f $.
Moreover, with the above choice, for $ r \geqslant a $,
equations~$(\ref{new-cond1-Psi})$ and~$(\ref{new-cond2-Psi-r>=a})$
become
\begin{equation} \label{equation-about-u-and-v}
 \left\{
\begin{array}{ll}
  u'(r) + v'(r) = 4r,\\[0.07in]
u'(r) - v'(r) = 2 r f(r)
\end{array} \right.
\end{equation}
Next, we consider equations~$(\ref{equation-about-u-and-v}$) for
every $r \geqslant 0 $, with initial conditions $ u(0) = v(0) = 0 $.
There is no difficulty in checking that the solutions of this system
are
\begin{equation} 
 \left\{
\begin{array}{ll}
 u(r) = \int_{0}^{r} s(2+f(s)) ds ,\\[0.07in]
v(r) = \int_{0}^{r} s(2-f(s)) ds
\end{array} \right.
\end{equation}
One can easily check, that as required, the function $u$ and $v$ satisfy
\begin{equation}
 \left\{
\begin{array}{ll}
u'(r),v'(r) > 0, \ \ {\rm for} \ r > 0, \\[0.07in]
u(r) = v(r) = r^2, \ \ {\rm for} \ r
\leqslant a \ \ {\rm and} \ r \geqslant A
\end{array}  \right.
\end{equation}
Moreover, by
definition, they satisfy equations~$(\ref{new-cond1-Psi})$
and~$(\ref{new-cond2-Psi})$ when $ r \geqslant a $. Let us now show
that these equations hold
for $ r < a $ as well. First, note that equation~$(\ref{new-cond1-Psi})$
clearly holds when $ r < a $. Second, by defintion, for $ r < a $ one has $ u(r) =
v(r) = r^2 $, and $ R(r,\theta) = r $.  Hence, we compute
$$ u'(r)
\int_{0}^{2\pi} \phi(R(r,\theta))\psi(\theta) \mu(\theta) d\theta +
v'(r) \int_{0}^{2\pi} \phi(R(r,\theta)) \psi(\theta) \nu(\theta)
d\theta
$$ $$ = u'(r)\phi(r) \int_{0}^{2\pi} \psi(\theta) \mu(\theta)
d\theta + v'(r) \phi(r) \int_{0}^{2\pi} \psi(\theta) \nu(\theta)
d\theta $$
$$ = 2r\phi(r) \Bigl( \int_{0}^{2\pi} \psi(\theta) \mu(\theta) d\theta + \int_{0}^{2\pi} \psi(\theta) \nu(\theta) d\theta \Bigr) $$ $$ = 2r\phi(r)(2\pi-2\pi) = 0$$
Combining this with the fact that $supp(f) \subset D_{a,A}$, we obtain that
%
the functions $u$ and $v$, 
satisfy~$(\ref{new-cond1-Psi})$ and~$(\ref{new-cond2-Psi})$ for all
$ r \geqslant 0 $. We conclude that the resulting diffeomorphism $
\Psi $ satisfies conditions~$(\ref{E:condition1-on-Psi})$
and~$(\ref{E:condition2-on-Psi})$.
Furthermore, since the diffeomorphism $ \Psi $ satisfies~$(\ref{E:condition1-on-Psi})$, and $supp(\Psi) \subset
D_{a,A}$, by using a similar arguments as in the proof of Lemma (3.1.5) from~\cite{Schl}, we conclude that there
exists an area-preserving diffeomorphism $ \Phi : \mathbb{R}^2 \rightarrow \mathbb{R}^2 $,
with $supp(\Phi) \in D_{a,A}$, such that  $ \Phi(D(r)) = \Psi(D(r)) $ for any $ r > 0 $. Thus, we
obtain
$$ \int_{D_r} (\Phi^* F) \omega = \int_{D_r} \Phi^* (F \omega) = \int_{\Phi(D_r)} F \omega = \int_{\Psi(D_r)} F \omega  = \int_{D_r} \Psi^* (F \omega) = \int_{D_r} f \omega,$$
and the proof of the Proposition in now complete.

\subsubsection{Technical Lemmata} \label{subsection-techniacal-lemmata}

In this subsection we prove Lemma~\ref{lemma-about-decomposition}
which was used in the proof of
Proposition~\ref{prop-about-decompsition2}. We start with the
following preparation:

\begin{lemma} \label{L:special-function}
There is a smooth function $ \phi : \mathbb{R} \rightarrow
\mathbb{R} $ with the following properties:
\begin{enumerate}\addtolength{\itemsep}{-1.5\baselineskip}
 \item  $supp (\phi) = [0,3],$ \\[0.1in]
\item  $\phi(t) > 0, \ {\rm for } \ t \in (0,3)$, \\[0.1in]
\item $\phi'(t) > 0, \ {\rm for } \ t \in (0,3/2), \ {\rm and} \ \phi'(t) < 0 \ {\rm for } \ t \in (3/2,3),$ \\[0.1in]
\item $\sup\limits_{t \in (0,3)}  \Bigl ( \frac{\phi'(t)}{\phi(t)} \Bigr )' = \sup\limits_{t \in (0,3)} \frac{\phi''(t)\phi(t) - \phi'(t)^2}{\phi(t)^2}  < 0,$ \\[0.14in]
\item $\sum_{n \in \mathbb{Z}} \phi(t+n) \equiv 1$
\end{enumerate}
\end{lemma}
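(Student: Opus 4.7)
My plan is to construct $\phi$ as the normalization of an explicit log-concave bump. For a parameter $\lambda > 0$ to be taken sufficiently large, let
$$g(t) = \frac{1}{t} + \frac{1}{3-t}\quad(t \in (0,3)), \qquad \tilde\phi(t) = e^{-\lambda g(t)} \text{ on } (0,3), \qquad \tilde\phi \equiv 0 \text{ elsewhere}.$$
The function $g$ is smooth, strictly convex, symmetric about $3/2$, singular at the endpoints, and satisfies $g''(t) = 2/t^3 + 2/(3-t)^3 \geq 32/27$. Consequently $\tilde\phi \in C_c^\infty(\mathbb{R})$ with all derivatives vanishing at $0$ and $3$, and $(\log\tilde\phi)'' = -\lambda g''\leq -32\lambda/27<0$; thus $\tilde\phi$ satisfies properties (1)--(4) but not (5).

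To enforce (5), I set $F(t) = \sum_{n \in \mathbb{Z}}\tilde\phi(t+n)$, a smooth positive $1$-periodic function satisfying $F(3-t)=F(t)$, and define $\phi(t) := \tilde\phi(t)/F(t)$. Then property (5) is automatic, properties (1), (2) and the symmetry $\phi(3-t)=\phi(t)$ persist, and only (3), (4) need checking. For (4) I compute
$$(\log\phi)''(t) = -\lambda g''(t) - (\log F)''(t),$$
so the task is to show the periodic correction does not overwhelm $-\lambda g''$. On each open subinterval $(0,1)$, $(1,2)$, $(2,3)$, $F$ is a sum of only three translates of $\tilde\phi$. On $(0,1)$ and $(2,3)$ one translate strongly dominates for large $\lambda$, and a Laplace-type analysis gives $\log F \approx -\lambda g(\cdot + 1)$ (respectively $-\lambda g(\cdot-1)$); since $g''(t) > g''(t\pm 1)$ pointwise on these intervals (by the symmetry and monotonicity of $g''$), this forces $(\log\phi)''<0$ there. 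On $(1,2)$ one uses the factorization $\phi = 1/G$ with $G = 1 + e^{\lambda h_{-1}} + e^{\lambda h_1}$, where $h_{\pm 1}(t) = g(t) - g(t\pm 1) < 0$, so that $(\log\phi)'' = -(\log G)''$, and one verifies that $G$ is strictly log-convex for $\lambda$ large.

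Property (3) then follows for free from (4) together with the symmetry $\phi(3-t)=\phi(t)$: this symmetry gives $\phi'(3/2)=0$, and strict log-concavity makes $(\log\phi)'$ strictly decreasing through zero at $3/2$, so $\phi'>0$ on $(0,3/2)$ and $\phi'<0$ on $(3/2,3)$. The main obstacle is verifying the log-convexity of $G$ at the two delicate configurations: the axis of symmetry $t = 3/2$ and the transition points $t=1,2$. Both are settled by explicit computation. At $t=1$ one has $F(1) = 2e^{-3\lambda/2}$, $F'(1) = 0$, and $F''(1) = 2 e^{-3\lambda/2} \lambda(9\lambda/16 - 9/4)$, hence $(\log\phi)''(1) = -9\lambda^2/16 < 0$; at $t=3/2$ an expansion in the small parameter $e^{-16\lambda/15}$ yields $(\log\phi)''(3/2) = -2 \lambda e^{-16\lambda/15}(9216\lambda/625 - c) + O(\lambda^2 e^{-32\lambda/15})$ for an explicit constant $c>0$, which is strictly negative once $\lambda > 625c/9216$. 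A uniform continuity argument on the compact closure of the relevant regions then upgrades the pointwise bound to $\sup_{(0,3)}(\log\phi)''<0$, completing the verification of (4) and hence of the lemma.
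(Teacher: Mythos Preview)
Your approach is genuinely different from the paper's. The paper builds $\phi$ piecewise: on $[0,1]$ it uses a $C^2$-small modification $F$ of $e^{-2/x}$ whose second derivative is made to vanish near $x=1$; on $[1,2]$ it sets $\phi$ proportional to $2F(1)-F(x-1)-F(2-x)$; and $[2,3]$ is obtained by reflection. The partition-of-unity identity (5) is then automatic from the algebra, and strict log-concavity is checked interval by interval using only that $(\log F)''<0$ on $(0,1)$ and that $F''(x-1)+F''(2-x)>0$ on $(1,2)$. No asymptotic parameter enters. Your idea---start with the explicitly log-concave bump $\tilde\phi=e^{-\lambda g}$ and divide by its periodization $F$---is cleaner in spirit and your spot-checks at $t=1$ and $t=3/2$ are correct.

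The gap is in the global step. Your Laplace heuristic on $(0,1)$ gives the leading term $-\lambda\bigl(g''(t)-g''(t+1)\bigr)$, and you correctly observe this is strictly negative on $(0,1)$; but $g''(t)-g''(t+1)\to 0$ as $t\to 1^-$, so the leading term vanishes there while the error in the approximation $\log F\approx -\lambda g(\cdot+1)$ simultaneously blows up (since $g(t)-g(t+1)\to 0$ forces the neglected translate to be of the same order as the dominant one). The exact value $(\log\phi)''(1)=-9\lambda^2/16$ comes precisely from these ``error'' terms, so one cannot simply invoke continuity at $t=1$ to glue the asymptotic region to the pointwise computation: you need a uniform estimate on a $\lambda$-dependent neighborhood of $t=1$, and the same issue recurs on $(1,2)$ near both endpoints (where $h_{-1}\to-\infty$ but $h_1\to 0$, so one exponential in $G$ is order $1$). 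The phrase ``a uniform continuity argument on the compact closure'' does not supply this; what is actually needed is an explicit bound of the form $(\log G)''\geq c\lambda^2\,e^{\lambda h_1}/(1+e^{\lambda h_1})^2$ near $t=1$ (and its analogues), which does follow from your computations but has to be written down. The paper's piecewise construction avoids this entirely because no competing translates ever have to be balanced against one another.
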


\begin{proof}[\bf Proof of Lemma~\ref{L:special-function}]
Consider first the smooth function $ f: \mathbb{R} \rightarrow
\mathbb{R} $, defined by
\begin{equation*}
f(x)=  \left\{
\begin{array}{ll}
 e^{-\frac{2}{x}}, \ \ {\rm  for} \  x > 0 , \\[0.07in]
 0, \ \ \ \ \ \, {\rm for} \ x \leqslant 0
\end{array}  \right.
\end{equation*}
Note that for $x > 0$, one has $$ f''(x) = \frac{4}{x^4}e^{-\frac{2}{x}}(1-x),$$ 
and hence $ f''(x) > 0 $ for $ x \in (0,1) $, and  $
f''(0) = f''(1) = 0 $. Note moreover that $$ f''(x)f(x) - f'(x)^2 =
\Big( \frac{f'(x)}{f(x)} \Big)^{'}f(x)^2 = -\frac{4}{x^3}
e^{-\frac{4}{x}} < 0, \ \ {\rm for} \ x \in (0, +\infty) $$
We approximate, in the $ C^0$-norm, the function $ f''|_{[0,1]} $ arbitrarily close
by a smooth positive function $ h:[0,1] \rightarrow [0,\infty)$, such that
$ h(x) = f''(x)$  for $ x \in [0,\frac{1}{2}] $, and such that $
h(x)=0$
near $ x=1 $. 
Next, consider the smooth function $ F : [0,1] \rightarrow \mathbb{R} $,
that is uniquely determined by the requirements $ F''(x) = h(x) $,
and $ F(0) = F'(0) = 0 $. Note that the function $F$ is arbitrary
close, in the $C^2$-topology, to $ f|_{[0,1]} $, and $ F(x) = f(x) $
for $ x \in [0, \frac{1}{2}] $. Moreover, the requirement that $h$
is $ C^0 $-sufficiently close to $ f''|_{[0,1]} $ ensures that $
F''(x)F(x) - F'(x)^2 < 0 $, for every $ x \in (0,1) $. We further
observe that by definition, $ F''(x) + F''(1-x) > 0 $ for all $ x
\in (0,1) $, and that $ F(x) $ is a linear function near $x=1$.
Finally, we define $ \phi:\mathbb{R} \rightarrow
\mathbb{R} $ as follows:
\begin{equation*}
\phi(x) = \left\{
\begin{array}{llll}
{ \frac{F(x)}{2F(1)} }& \text{for } x \in [0,1],\\[0.1in]
\frac{2F(1) - F(x-1) - F(2-x)}{2F(1)} & \text{for } x \in  (1,2], \\[0.1in]
\frac{F(3-x)}{2F(1)} & \text{for } x \in  (2,3], \\[0.1in]
0 & \text{for } x \notin [0,3]\\[0.05in]
\end{array} \right.
\end{equation*}
It follows immediately from the definition that $\phi$ is a
non-negative smooth function, with  $ supp (\phi) = [0,3] $. Note
moreover that $ \phi(x) = \phi(3-x) $, and that for $x \in (1,2)$:
\begin{equation} \label{eq-about-derivative-of-phi}
(\phi_{\arrowvert_{(1,2)}})''(x) = { {\frac {-F''(x-1)-F''(2-x)}
{2F(1)}}} < 0 \end{equation}
Combining this with the fact that $\phi'(3/2)=0$, we obtain that $
\phi'(x)
> 0 $ for $ x \in (1,3/2) $, and $ \phi'(x) < 0 $ for $ x \in
(3/2,3) $. Furthermore, from the definition of the function
$F$, it follows that
$ \phi'(x) > 0 $ for $ x \in (0,1] $ and $ \phi'(x) < 0 $ for $ x
\in [2,3) $. Thus, we conclude that $\phi$ satisfies the first three
requirements of the lemma.
We next turn to show that $\phi$ satisfies the forth one.
Note that $\phi''(x)\phi(x) - \phi'(x)^2 < 0 $ for $x
\in (0,3)$. This follows from the analogous property of $ F $ for $
x \in (0,1) \cup (2,3) $; from~$(\ref{eq-about-derivative-of-phi})$
for $x \in (1,2)$; and from the fact that $ \phi''(x_0)\phi(x_0) -
\phi'(x_0)^2 = -\phi'(x_0)^2 < 0 $ for $ x_0 = 1,2 $.
Moreover, from the definition of the function $\phi $ it follows
that  $ \phi(x) \simeq e^{-\frac{2}{x}} $ for $ x $ close to $ 0 $,
and $ \phi(x) \simeq e^{-\frac{2}{3-x}} $ for $ x $ close to $ 3 $,
where $\simeq$ means arbitrary close in the $C^2$-topology.
Therefore, we obtain:
$$ \lim_{x \rightarrow 0^+} \frac{\phi''(x)\phi(x) -
\phi'(x)^2}{\phi(x)^2} = \lim_{x \rightarrow 3^-}
\frac{\phi''(x)\phi(x) - \phi'(x)^2}{\phi(x)^2} = - \infty .$$ From
the above we conclude that: $$ \sup_{x \in (0,3)}
\frac{\phi''(x)\phi(x) - \phi'(x)^2}{\phi(x)^2} < 0,$$ as required.
Finally, there is no difficulty in checking that $ \sum_{n \in
\mathbb{Z }} \phi(x+n) = 1 $. The details of this last step are left
to the reader.
\end{proof}

\begin{lemma} \label{L:Localization1}
 Let $ R= [\alpha_1,\beta_1] \times [\alpha_2,\beta_2] \subset \mathbb{R}^2 $ be a rectangle, and  consider two smooth non-negative functions
$u : [\alpha_1,\beta_1] \rightarrow {\mathbb R}$, and $v :
[\alpha_2,\beta_2] \rightarrow {\mathbb R}$, positive on $ (\alpha_1,\beta_1) $ and $ (\alpha_2,\beta_2) $ respectively, such that $u(x) = e^{{\frac {-1} {x-\alpha_1}}}$ near $\alpha_1$; $u(x) = e^{{\frac
{-1} {\beta_1-x}}}$ near $\beta_1$, $v(y) =e^{{\frac {-1}
{y-\alpha_2}}}$ near $\alpha_2$; and $v(y) = e^{{\frac {-1}
{\beta_2-y}}}$ near $\beta_2$. Moreover, let $ \phi(x) $ be the
function described in Lemma~\ref{L:special-function} above, and let
 $ F: \mathbb{R}^2 \rightarrow
\mathbb{R} $ be any smooth function that satisfies:
\begin{packed_enum}
\item $ supp(F) = R $ 
\item $ F(x,y) > 0 $ for $ (x,y) \in int(R) $
\item $F(x,y) = u(x)v(y)$ near the boundary of $R$ 
\end{packed_enum}
Then there exists an $ \epsilon_0 > 0 $, such that for any $ 0 <
\epsilon < \epsilon_0 $, and $ a \in \mathbb{R} $, the following holds:
denote by $ G(x,y) = F(x,y) \phi(\frac{x-a}{\epsilon}) $, and assume that $G
\neq 0$ (this holds when $ (\alpha_1,\beta_1) \cap (a,a+3\epsilon) \neq \emptyset $). Moreover,
set $ U = supp(G) = [a_1,a_2] \times [\alpha_2,\beta_2] $. Then, there
exists a smooth function $ c : [\alpha_2,\beta_2] \rightarrow
(a_1,a_2) $, which is constant near $ \alpha_2,\beta_2 $, such that
for any $ y \in (\alpha_2,\beta_2) $ one has:
\begin{equation*}
 \left\{
\begin{array}{rl}
 \frac{\partial}{\partial x} G(x,y) > 0,  & \text{for } a_1 < x <
c(y),\\[0.1in]
\frac{\partial}{\partial x} G(x,y) < 0, & \text{for }  c(y) < x <
a_2
\end{array} \right.
\end{equation*}

\end{lemma}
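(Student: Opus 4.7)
The plan is, for each $y\in(\alpha_2,\beta_2)$, to show that $x\mapsto G(x,y)$ has a unique, strictly non-degenerate critical point in $(a_1,a_2)$ and to define $c(y)$ as that point; smoothness of $c$ will then follow from the implicit function theorem, and constancy of $c$ near the $y$-endpoints will come from the product structure of $F$ near $\partial R$. Both uniqueness and non-degeneracy reduce to a single estimate: strict log-concavity of $G$ in $x$ on the open set $\{G>0\}$, once $\epsilon$ is sufficiently small. Concretely, with $\tau=(x-a)/\epsilon$, a direct computation gives
\begin{equation*}
(\log G)_{xx}(x,y) \;=\; \frac{FF_{xx}-F_x^2}{F^2}(x,y) \;+\; \frac{1}{\epsilon^2}\cdot\frac{\phi''\phi-(\phi')^2}{\phi^2}(\tau).
\end{equation*}
By property (4) of Lemma~\ref{L:special-function} the second term is uniformly at most $-\delta/\epsilon^2$ for some $\delta>0$. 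For the first term, wherever the factorization $F=uv$ holds the expression reduces to $(u'/u)'(x)$: near $x=\alpha_1$ this equals $-2/(x-\alpha_1)^3$ and so tends to $-\infty$, and symmetrically near $x=\beta_1$; on any compact subinterval of $(\alpha_1,\beta_1)$ it is bounded, and on the complementary interior of $R$ the expression is bounded since $F$ is smooth and positive there. Hence the first term has a uniform upper bound $M$, and choosing $\epsilon_0$ with $M<\delta/\epsilon_0^2$ forces $(\log G)_{xx}<0$ throughout $\{G>0\}$ for every $0<\epsilon<\epsilon_0$. At any critical point of $G(\cdot,y)$ we then have $G_{xx}=G\cdot(\log G)_{xx}<0$, so critical points are isolated.

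To locate the critical point on a slice $y\in(\alpha_2,\beta_2)$, I examine the boundary behavior of $G_x/G = F_x/F+\phi'(\tau)/(\epsilon\phi(\tau))$ on $[a_1,a_2]$. As $x\to a_1^+$, either $a_1=a$ (so $\tau\to 0^+$ and $\phi'/\phi\sim 2/\tau^2\to+\infty$) or $a_1=\alpha_1$ (so, from the explicit form of $u$, $F_x/F\sim 1/(x-\alpha_1)^2\to+\infty$); either way $G_x>0$ just to the right of $a_1$, and by the symmetric argument $G_x<0$ just to the left of $a_2$. The intermediate value theorem produces a zero of $G_x(\cdot,y)$ in $(a_1,a_2)$, unique by strict log-concavity; denote it $c(y)$. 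The implicit function theorem, applied using $G_{xx}(c(y),y)\neq 0$, makes $c$ smooth on $(\alpha_2,\beta_2)$.

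For $y$ in a one-sided neighborhood of $\alpha_2$ (and similarly of $\beta_2$), the factorization $F(x,y)=u(x)v(y)$ holds for every $x\in[\alpha_1,\beta_1]$, so
\begin{equation*}
G_x(x,y) \;=\; v(y)\bigl[\,u'(x)\phi(\tau)+\epsilon^{-1}u(x)\phi'(\tau)\,\bigr];
\end{equation*}
since $v(y)\neq 0$ when $y\neq\alpha_2$, the equation $G_x=0$ depends only on $x$, so $c(y)$ is constant on this neighborhood. Extending $c$ by this constant value at $\alpha_2$ (and symmetrically at $\beta_2$) gives the required smooth function, constant near the endpoints; the asserted sign pattern for $G_x$ follows from the fact that on each slice $G_x$ is positive near $a_1$, negative near $a_2$, and vanishes only at $c(y)$. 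The main obstacle in carrying this out is the first step: controlling $(FF_{xx}-F_x^2)/F^2$ uniformly across the transition between the region where $F$ is factored and the interior where it is not, which is exactly why the specific exponential form of $u,v$ near $\partial R$ was built into the hypotheses.
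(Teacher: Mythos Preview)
Your proof is correct and follows essentially the same strategy as the paper's: establish strict log-concavity of $G(\cdot,y)$ via the identity $(\log G)_{xx}=(F_x/F)_x+\epsilon^{-2}(\phi'/\phi)'(\tau)$, use property~(4) of $\phi$ to make the second term dominate, and invoke the implicit function theorem together with the product structure $F=uv$ near $\partial R$ for smoothness and constancy of $c$ near the endpoints. The only cosmetic difference is that the paper splits into two cases according to whether the window $[a,a+3\epsilon]$ lies in the ``interior'' zone of $[\alpha_1,\beta_1]$ or touches the boundary zone where $F=uv$, whereas you bound $(F_x/F)_x$ from above globally on $\mathrm{int}(R)$ in one stroke; your route is slightly more economical but not substantively different.
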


\begin{proof}[{\bf Proof of Lemma~\ref{L:Localization1}}]
From the above assumptions it follows that
there exists $ \alpha_1 < \gamma_1 < \delta_1 < \beta_1 $, such that
$ u(x) = e^{{\frac {-1} {x-\alpha_1}}}$ for $ \alpha_1 < x <
\gamma_1 $, $u(x) = e^{{\frac {-1} {\beta_1-x}}}$ for $\delta_1 < x
< \beta_1$, and $ F(x,y) = u(x)v(y) $ when $ x \in
(\alpha_1,\gamma_1] \cup [\delta_1,\beta_1) $. Pick some $
\gamma_1',\delta_1' $, such that $ \alpha_1 < \gamma_1' < \gamma_1 <
\delta_1 < \delta_1' < \beta_1$, and denote $ \epsilon_1 = \min \{
\frac{\gamma_1 - \gamma_1'}{3},\frac{\delta_1' - \delta_1}{3} \} $.
 Next, take any $ 0 < \epsilon < \epsilon_1 $, and any $ a \in \mathbb{R} $, and
consider the function $ G(x,y) = F(x,y) \phi(\frac{x-a}{\epsilon})$.

\noindent {\bf Case I:} Assume $ a \in [\gamma_1',\delta_1] $. Then,
one has $ \gamma_1' \leqslant a < a+3\epsilon \leqslant \delta_1' $,
and therefore $ supp(G) = [a,a+3\epsilon] \times [\alpha_2,\beta_2]
$. Fix some $ y_0 \in (\alpha_2,\beta_2) $. Our goal is to show that
for sufficiently small $ \epsilon $ (which is independent of $ y_0
$), there exists a value $ c(y_0) \in (a,a+3\epsilon) $, such that $
\frac{\partial}{\partial x} G(x,y_0) > 0 $, for $ a < x < c(y_0) $,
and $ \frac{\partial}{\partial x} G(x,y_0) < 0 $, for $ c(y_0) < x <
a + 3\epsilon $. For this end, we compute:  $$ \frac{ \frac{\partial}{\partial x}
G(x,y_0) }{G(x,y_0)} = \frac{ \frac{\partial }{\partial x} F(x,y_0)
}{F(x,y_0)} + \frac{1}{\epsilon} \frac{ \phi'( \frac{x-a}{\epsilon}
) }{ \phi( \frac{x-a}{\epsilon} ) } .$$ Note that, the function $ x
\mapsto G(x,y_0) $ is a 
positive function, supported in
$ [a,a + 3\epsilon] $. Thus, $
\frac{ \frac{\partial}{\partial x} G(x,y_0) }{G(x,y_0)} = 0 $ at
least at one point $ x \in (a, a+3\epsilon) $ (e.g., at the
maximum point of $ x \mapsto G(x,y_0) $). Let us show next that: 
\begin{equation} \frac{\partial}{\partial x} \frac{ \frac{\partial}{\partial x} G(x,y_0) }{G(x,y_0)} < 0, \ \ {\rm for \ all \ } x \in (a,a+3\epsilon) \end{equation} 
We start by claiming that $ \frac{\partial}{\partial x} \frac{\frac{\partial }{\partial x} F(x,y) }{F(x,y)} $ is bounded on $ [\gamma_1',\delta_1'] \times (\alpha_2,\beta_2) $. Indeed, from the assumptions of the lemma it follows that  $ F(x,y) = u(x)v(y) $ near the boundary of $ R $, and therefore there exist $ \alpha_2 < \gamma_2 < \delta_2 < \beta_2 $, such that $ F(x,y) = u(x)v(y) $ for $ y \in [\alpha_2,\gamma_2] \cup  [\delta_2,\beta_2]$. 
Thus, for a point $(x,y)$ near the boundary of $R$, one has
\begin{equation} \frac{\partial}{\partial x} \frac{\frac{\partial }{\partial x} F(x,y) }{F(x,y)} =
\frac{\partial}{\partial x} \frac{u'(x)}{u(x)} = \frac{u''(x)u(x) -
u'(x)^2}{u(x)^2} \end{equation} Restricting ourselves to the case where $ x \in
[\gamma_1',\delta_1'] $ and $ y \in [\alpha_2,\gamma_2] \cup  [\delta_2,\beta_2]$, and by noticing that $
u|_{(\alpha_1,\beta_1)} $ is strictly positive smooth function, we
obtain that the function $ \frac{\partial}{\partial x}
\frac{\frac{\partial }{\partial x} F(x,y) }{F(x,y)} $ is bounded on
$ [\gamma_1',\delta_1'] \times ((\alpha_2,\gamma_2] \cup
[\delta_2,\beta_2))$. On the other hand, because of compactness,
the function $ \frac{\partial}{\partial x} \frac{\frac{\partial
}{\partial x} F(x,y) }{F(x,y)} $ is bounded on $
[\gamma_1',\delta_1'] \times [\gamma_2,\delta_2] $. 
Hence, we
conclude that $ \frac{\partial}{\partial x} \frac{\frac{\partial
}{\partial x} F(x,y) }{F(x,y)} $ is bounded on 
$ [\gamma_1',\delta_1'] \times
(\alpha_2,\beta_2) $. Next, note that \begin{equation} \label{eq-about-deriv-of-phiprime-over-phi} \frac{\partial}{\partial
x} \frac{1}{\epsilon} \frac{ \phi'( \frac{x-a}{\epsilon} ) }{ \phi(
\frac{x-a}{\epsilon} ) } = \frac{1}{\epsilon^2} \frac{ \phi''(
\frac{x-a}{\epsilon}) \phi( \frac{x-a}{\epsilon}) - \phi'(
\frac{x-a}{\epsilon})^2 }{ \phi( \frac{x-a}{\epsilon} )^2 } \end{equation} From
Lemma~\ref{L:special-function} it follows that
\begin{equation} \label{eq-prop-number-4-from-the-tech-lemma}  \sup_{t\in(0,3)} \frac{ \phi''(t) \phi(t) - \phi'(t)^2 }{ \phi(t)^2 } < 0, \end{equation}
and hence~$(\ref{eq-about-deriv-of-phiprime-over-phi})$ can be chosen to be arbitrarily negative.
As a conclusion, we obtain that for sufficiently small $ \epsilon $,
say $ 0 < \epsilon < \epsilon_2 $, one has
\begin{equation} \label{eq-about-the-der-of-g-over-gprime}
\frac{\partial}{\partial x} \frac{ \frac{\partial}{\partial x}
G(x,y) }{G(x,y)} < 0, \  {\rm for \ every \ } (x,y) \in supp(G) = [a,a+3\epsilon]
\times (\alpha_2,\beta_2) \end{equation} Moreover, for any $ y \in
(\alpha_2,\beta_2) $, there exists therefore a unique $ x:= c(y)
\in (a,a+3\epsilon) $, such that $ \frac{ \frac{\partial}{\partial
x} G(x,y) }{G(x,y)} = 0 $. It follows from~$(\ref{eq-about-the-der-of-g-over-gprime}$)
and the implicit function theorem, that
the function $ y \mapsto c(y) $ is smooth for $ y \in
(\alpha_2,\beta_2) $. Moreover, since  $ \frac{
\frac{\partial}{\partial x} G(x,y) }{G(x,y)} $ is independent of $ y
$, when $ y $ is close to $ \alpha_2 $ or to $ \beta_2 $, it follows
that $ y \mapsto c(y) $ is constant near the endpoints
$ \alpha_2,\beta_2 $.
This completes the proof of the Lemma in Case I. \\

\noindent {\bf Case II}: Assume that $ a < \gamma_1' $ or $ a > \delta_1 $.
Here we have $ [a,a + 3 \epsilon] \subset (-\infty,\gamma_1)
\cup (\delta_1,+\infty) $. Therefore, the function
$$ \frac{\frac{\partial }{\partial x} F(x,y) }{F(x,y)} =  \frac{u'(x)}{u(x)} $$ is independent of $ y $, as well as
$$ \frac{\frac{\partial}{\partial x} G(x,y) }{G(x,y)} = \frac{ u'(x) }{ u(x) } +
\frac{1}{\epsilon} \frac{ \phi'( \frac{x-a}{\epsilon} ) }{ \phi(
\frac{x-a}{\epsilon} ) } ,$$ for $ (x,y) \in supp(G) $. Also, for $
(x,y) \in supp(G) $ one has $$ \frac{\partial}{\partial x}
\frac{\frac{\partial }{\partial x} F(x,y) }{F(x,y)} =
\frac{\partial}{\partial x} \frac{u'(x)}{u(x)} = \frac{u''(x)u(x) -
u'(x)^2}{u(x)^2}$$ Thus, since $ u(x) = e^{{\frac {-1}
{x-\alpha_1}}}$ for $x \in (\alpha_1,\gamma_1) $, and $u(x) = e^{{\frac
{-1} {\beta_1-x}}}$ for $x \in (\delta_1,\beta_1) $, we obtain
$$ \frac{\partial}{\partial x} \frac{\frac{\partial
}{\partial x} F(x,y) }{F(x,y)} < 0,  \ \ {\rm for } \ (x,y) \in supp(G) $$ 
As in Case I, by combining~$(\ref{eq-about-deriv-of-phiprime-over-phi})$ and~$(\ref{eq-prop-number-4-from-the-tech-lemma})$, one has $$ \frac{\partial}{\partial x} \frac{1}{\epsilon}
\frac{ \phi'( \frac{x-a}{\epsilon} ) }{ \phi( \frac{x-a}{\epsilon} )
} < 0,  \ \ {\rm for } \ (x,y) \in supp(G)  $$ 
Therefore, we conclude that $$
\frac{\partial}{\partial x} \frac{ \frac{\partial}{\partial x}
G(x,y) }{G(x,y)} < 0,  \ \ {\rm for } \ (x,y) \in supp(G)  $$ 
As in the previous case,
since $ x \rightarrow G(x,y_0) $ is positive in the interior of its  support
$ supp(G) = [a_1,a_2] \times [\alpha_2,\beta_2] $, for each fixed $
y_0 \in (\alpha_2,\beta_2) $ there exists $ x \in (a_1,a_2) $ such
that $ \frac{\partial}{\partial x} G(x,y_0) = 0 $. Therefore for
each fixed $ y_0 \in (\alpha_2,\beta_2)$, there is a unique $ x =c(y_0) \in
(a_1,a_2)$, such that $ \frac{ \frac{\partial}{\partial x} G(x,y)
}{G(x,y)} = 0 $. Moreover, since the function $
\frac{\frac{\partial}{\partial x} G(x,y) }{G(x,y)} $ is independent
of $ y $ for $ (x,y) \in supp(G) $, we conclude that the function $ y
\mapsto c(y) $ is constant on $ (\alpha_2,\beta_2) $. This completes
the proof of 
lemma~\ref{L:Localization1}.
\end{proof}

\begin{lemma} \label{L:Localization2}
In the same setting as in Lemma~\ref{L:Localization1}, for any
open neighborhood $ V$ of $ U = supp(G) = [a_1,a_2] \times
[\alpha_2,\beta_2] $, there exists a compactly supported
diffeomorphism $ \Phi : V \rightarrow V $, such that $ H = \Phi^*G $ satisfies
$ | \frac{\partial}{\partial x} H | \leqslant
\frac{3 \|G\|_{\infty}}{a_2 - a_1} $, and $supp(H) = supp(G)$.
\end{lemma}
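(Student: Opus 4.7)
\noindent\textbf{Proof plan for Lemma~\ref{L:Localization2}.}

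I construct $\Phi$ in the product form $\Phi(x,y)=(\psi_y(x),y)$, so that $H(x,y)=G(\psi_y(x),y)$ and only the $x$-derivative needs controlling; the problem reduces to finding a smooth $y$-parametrized family of diffeomorphisms $\psi_y$ of $[a_1,a_2]$ fixing the endpoints, then extending smoothly by identity in both variables. The essential input is Lemma~\ref{L:Localization1}: for each $y\in(\alpha_2,\beta_2)$ the function $G(\cdot,y)$ is a smooth single-humped bump on $[a_1,a_2]$ whose unique critical point $c(y)$ is a non-degenerate maximum (the non-degeneracy being forced by property~(4) of Lemma~\ref{L:special-function}), with $c$ smooth in $y$ and constant near $y=\alpha_2,\beta_2$.

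Fix once and for all a smooth reference profile $\zeta:[0,2]\to[0,1]$ with $\zeta(1)=1$, symmetric about $t=1$, strictly positive on $(0,2)$, flat to infinite order at $t=0,2$, and $|\zeta'|_\infty\leqslant 3/2$. Such $\zeta$ is easy to produce by taking $\zeta'(t)$ to be a smooth cutoff, concentrated on $[\delta,1-\delta]$, whose mean on $[0,1]$ equals $1$, and extending by symmetry. Set $M(y):=G(c(y),y)$, smooth by Lemma~\ref{L:Localization1}, and take as target
$$ H(x,y):=M(y)\,\zeta\!\Bigl(\tfrac{2(x-a_1)}{a_2-a_1}\Bigr),\qquad (x,y)\in U,$$
extended by zero elsewhere. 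Direct computation gives
$$ |\partial_x H|\leqslant \tfrac{2}{a_2-a_1}\,|\zeta'|_\infty\,M(y)\leqslant \tfrac{3\|G\|_\infty}{a_2-a_1},$$
so if we realize $H=\Phi^*G$ via some $\Phi$ with $\Phi(U)=U$, both the derivative bound and the support condition $\mathrm{supp}(H)=\mathrm{supp}(G)$ follow automatically.

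To build $\psi_y$ with $G(\psi_y(\cdot),y)=H(\cdot,y)$, use that $G(\cdot,y)$ restricts to smooth diffeomorphisms $[a_1,c(y)]\to[0,M(y)]$ and $[c(y),a_2]\to[0,M(y)]$ with inverses $\alpha_y,\beta_y$, and set
$$ \psi_y(x)=\begin{cases}\alpha_y(H(x,y)),& x\in[a_1,(a_1+a_2)/2],\\ \beta_y(H(x,y)),& x\in[(a_1+a_2)/2,a_2].\end{cases}$$
The two pieces agree at the midpoint with value $c(y)$ and fix the endpoints. Joint smoothness in $(x,y)$ away from $x=(a_1+a_2)/2$ is the implicit function theorem. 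Across the midpoint, the non-degenerate maximum of $G$ and the symmetry of $\zeta$ allow one to write $M(y)-G(x,y)=(x-c(y))^2 A(x,y)$ and $M(y)-H(x,y)=\bigl(x-(a_1+a_2)/2\bigr)^2 B(x,y)$ with $A,B$ smooth and positive; after extracting square roots with matching signs the defining equation becomes $(\psi_y(x)-c(y))\sqrt{A(\psi_y(x),y)}=\bigl(x-(a_1+a_2)/2\bigr)\sqrt{B(x,y)}$, which is solved smoothly by the implicit function theorem because $\sqrt{A(c(y),y)}>0$.

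The main obstacle is extending $\psi_y$ to a compactly supported diffeomorphism of $V$, and two boundary adjustments are required. First, in $x$: the cutoff $\phi((x-a)/\epsilon)$ appearing in $G$ forces $G$ and all its derivatives to vanish to infinite order near $x=a_1,a_2$, so $|\partial_x G|$ is arbitrarily small in a thin shell; I therefore modify $H$ on $[a_1,a_1+\delta]\cup[a_2-\delta,a_2]$ by setting $H:=G$ there and smoothly interpolating (maintaining strict monotonicity of $H(\cdot,y)$ on each half) to $M(y)\zeta(\cdots)$ across $[a_1+\delta,a_1+2\delta]\cup[a_2-2\delta,a_2-\delta]$; the derivative bound is preserved since the additional contributions are negligibly small, and for this modified $H$ the corresponding $\psi_y$ is the identity in $x$ near $\partial[a_1,a_2]$, hence extends smoothly in $x$ by identity. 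Second, in $y$: near $y=\alpha_2,\beta_2$ the assumed product form $F(x,y)=u(x)v(y)$ yields $G(x,y)=v(y)u(x)\phi((x-a)/\epsilon)$, and one checks that the formula for $\psi_y$ then becomes $y$-independent in a one-sided neighborhood of each endpoint, taking constant values $\psi_{\alpha_2},\psi_{\beta_2}$; on a thin outer shell $[\alpha_2-\eta,\alpha_2]$ I interpolate by $\psi_y(x)=x+\sigma(y)(\psi_{\alpha_2}(x)-x)$ with $\sigma$ a smooth function equal to $1$ for $y\geqslant\alpha_2$ (flat to infinite order there, to match the constant extension) and to $0$ for $y\leqslant\alpha_2-\eta$; this is a diffeomorphism of $[a_1,a_2]$ since $\psi_y'=1+\sigma(\psi_{\alpha_2}'-1)=(1-\sigma)+\sigma\psi_{\alpha_2}'>0$, and is handled symmetrically at $\beta_2$. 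The resulting $\Phi(x,y)=(\psi_y(x),y)$ is then smooth, compactly supported in $V$, sends $U$ to $U$, and satisfies $\Phi^*G=H$ with the required bound.
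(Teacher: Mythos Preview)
Your approach is different from the paper's and the architecture is sound, but there is one genuine soft spot that needs tightening.

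\textbf{Comparison with the paper.} The paper proceeds in two steps: first a diffeomorphism $\Psi_1$ of the form $(x,y)\mapsto(f^{c(y)}(x),y)$ that moves the critical curve $c(y)$ to the constant midpoint $\tfrac{a_1+a_2}{2}$; then, on the straightened function $G_1=\Psi_1^*G$, a linear smoothing operator $L_\epsilon$ that keeps $G_1$ unchanged on thin collars around $a_1$, the midpoint, and $a_2$, and replaces the derivative by suitable constants in between. The target $H=L_\epsilon G_1$ then agrees with $G_1$ near the three special $x$-values by construction, so the conjugating diffeomorphism $\Psi_2$ is automatically the identity there and extends trivially. You instead fix a reference profile $\zeta$, define the target $H$ directly as $M(y)\,\zeta$, and handle the moving critical point via a parametric Morse lemma. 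This is more geometric and avoids the preliminary straightening; the price is that you must then splice $H$ back to $G$ near $x=a_1,a_2$ by hand.

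\textbf{The gap.} Your specification of $\zeta$ is internally inconsistent and, more importantly, the boundary splice does not go through as stated. You ask $\zeta$ to be (i) strictly positive on $(0,2)$, (ii) flat to infinite order at $0,2$, and (iii) built so that $\zeta'$ is a cutoff ``concentrated on $[\delta,1-\delta]$''. Read literally, (iii) gives $\zeta\equiv 0$ on $[0,\delta]$, contradicting (i); it also gives $\zeta'\equiv 0$ near $t=1$, so the maximum of $\zeta$ at $t=1$ is degenerate and the Morse decomposition $M(y)-H=(x-m)^2B$ with $B>0$ fails. Even granting a $\zeta$ satisfying (i)--(ii), the interpolation ``$H:=G$ on $[a_1,a_1+\delta]$, then smoothly to $M(y)\zeta$ on $[a_1+2\delta,\dots]$, maintaining strict monotonicity'' is not automatic: since both $G$ and $\zeta$ vanish to infinite order at their respective endpoints, there is no control on whether $G(a_1+\delta,y)<M(y)\,\zeta\!\bigl(\tfrac{4\delta}{a_2-a_1}\bigr)$, and if the inequality goes the wrong way no increasing interpolation exists on $[a_1+\delta,a_1+2\delta]$.

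\textbf{Fix.} Drop the infinite-order flatness of $\zeta$; it is not needed once you set $H:=G$ near $x=a_1,a_2$. Take for instance the smoothstep $\zeta(t)=3t^2-2t^3$ on $[0,1]$, extended symmetrically: then $|\zeta'|\leqslant\tfrac32$, $\zeta''(1)=-6<0$ (so your Morse step is valid), and $\zeta$ vanishes only to finite order at $0$. Since $G$ is flat to infinite order at $x=a_1$ (coming from $\phi$ or from $u$), one has $G(a_1+\delta,y)=o(\delta^N)$ for every $N$ while $M(y)\,\zeta\!\bigl(\tfrac{4\delta}{a_2-a_1}\bigr)\asymp\delta^2$, so the convex combination $H=\lambda G+(1-\lambda)M\zeta$ with $\lambda$ decreasing from $1$ to $0$ on $[a_1+\delta,a_1+2\delta]$ satisfies $\partial_x H>0$ and $|\partial_x H|=O(\delta)$, uniformly in $y$. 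With this correction your argument goes through.
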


\begin{proof}[{\bf Proof of Lemma~\ref{L:Localization2}}]

We divide the proof of the lemma into two steps: \\

\noindent {\bf Step I: }
Let $V$ 
be an open neighborhood of $ U = supp(G) = [a_1,a_2] \times
[\alpha_2,\beta_2] $. Take $ \widetilde{\alpha}_2 < \alpha_2 < \beta_2 < \widetilde{\beta_2} $, such that $ [a_1,a_2] \times [\widetilde{\alpha}_2,\widetilde{\beta}_2] \subset V $.
Moreover, take $ \widetilde{a}_1,\widetilde{a}_2 $ such that $$ a_1 < \widetilde{a}_1 < \min_{[\alpha_2,\beta_2]} c(y) \leq \max_{[\alpha_2,\beta_2]} c(y) < \widetilde{a}_2 < a_2,$$
and, $$ \widetilde{a}_1 < \frac{a_1+a_2}{2} < \widetilde{a}_2 .$$
One can easily find a smooth family of diffeomorphisms $ f^t : (a_1,a_2) \rightarrow (a_1,a_2) $, $ t \in ( \widetilde{a}_1 , \widetilde{a}_2 ) $, such that:
\begin{equation*}
 \left\{
\begin{array}{lll}
 supp(f^t) \subset [ \widetilde{a}_1 , \widetilde{a}_2 ], \\[0.1in]
f^t(\frac{a_1+a_2}{2}) = t, \\[0.1in]
f^{ \frac{a_1+a_2}{2} } = \Id_{(a_1,a_2)}
\end{array} \right.
\end{equation*}
We extend the function $ c(y) $ to a smooth function on the interval $
(\widetilde{\alpha}_2,\widetilde{\beta}_2) $, such that $ c(y) = \frac{a_1+a_2}{2},$ for $y$ close enough to the points $\widetilde{\alpha}_2,\widetilde{\beta}_2 $. Next, define a diffeomorphism $$ \Psi_1 : (a_1,a_2) \times (\widetilde{\alpha}_2,\widetilde{\beta}_2) \rightarrow (a_1,a_2) \times (\widetilde{\alpha}_2,\widetilde{\beta}_2) $$ by the requirement: $$ \Psi_1(x,y) = (f^{c(y)}(x),y) .$$ It is not hard to check that the diffeomorphism $ \Psi_1 $ is the identity near the boundary of the rectangle $ (a_1,a_2) \times (\widetilde{\alpha}_2,\widetilde{\beta}_2) $, and therefore one can extend it by the identity, allowing ourselves a slight abuse of notation, to a diffeomorphism $ \Psi_1 : V \rightarrow V $. Denote $ G_1 = \Psi_1^* G $.
It follows from the definition of $ \Psi_1 $ that for $ y \in (\alpha_2,\beta_2) $, one has:
\begin{equation} \label{estimate-the-x-derivative-of-G1}
 \left\{
\begin{array}{rl}
 \frac{\partial}{\partial x} G_1(x,y) > 0,  & \text{for } a_1 < x <
\frac{a_1+a_2}{2},\\[0.1in]
\frac{\partial}{\partial x} G_1(x,y) < 0, & \text{for }  \frac{a_1+a_2}{2} < x <
a_2,
\end{array} \right.
\end{equation}
and moreover that $ supp (G_1) = [a_1,a_2] \times [\alpha_2,\beta_2] $, and $ G_1(x,y) = u_1(x)v_1(y) $ for $ x \in [a_1,a_2] $ and $ y $ being near $ \alpha_2 $ or $ \beta_2 $, where $ u_1(x) = (f^{c(\alpha_2)})^* ( u(x) \phi(\frac{x-a}{\epsilon})) $, $ v_1(x) = v(x) $.

\noindent {\bf \\ Step II: }

Let $0< \epsilon < {
\frac {a_2-a_1} {10}}$, and consider three families of smooth positive functions $ \chi^{\epsilon}_j : [a_1,a_2] \rightarrow [0,1]$, where $j = 1,2,3$, such that the following holds:  
\begin{eqnarray*}
& \chi^{\epsilon}_1(x) &  =  \left\{
\begin{array}{rl}
 1,  & \text{for } x \in [a_1,a_1+\epsilon] \cup [\frac{a_1+a_2}{2} - \epsilon, \frac{a_1+a_2}{2} + \epsilon ] \cup [a_2 - \epsilon,a_2] ,\\[0.1in]
0, & \text{for }  x \in [a_1+2\epsilon,\frac{a_1+a_2}{2} - 2\epsilon] \cup  [ \frac{a_1+a_2}{2} + 2\epsilon , a_2 - 2\epsilon],
\end{array} \right. \\
%
& \chi^{\epsilon}_2(x) & = \left\{
\begin{array}{lll}
 0,  & \text{for }  x \in [a_1,a_1+\epsilon] \cup [\frac{a_1+a_2}{2} - \epsilon, a_2] ,\\[0.1in]
1 , & \text{for }  x \in [a_1+2\epsilon,\frac{a_1+a_2}{2} - 2\epsilon],
\end{array} \right. \\
& \chi^{\epsilon}_3(x) & = \left\{
\begin{array}{rl}
 0,  & \text{for }  x \in [a_1,\frac{a_1+a_2}{2} + \epsilon] \cup [a_2 - \epsilon, a_2]  ,\\[0.1in]
1 , & \text{for }  x \in [\frac{a_1+a_2}{2} + 2\epsilon, a_2 - 2\epsilon],
\end{array} \right.
\end{eqnarray*}
and moreover, 
\begin{equation*}
 \left\{
\begin{array}{rl}
 \chi^{\epsilon}_2(x) > 0,  & \text{for } x \in (a_1 + \epsilon, \frac{a_1+a_2}{2} - \epsilon)  ,\\[0.1in]
\chi^{\epsilon}_3(x) > 0, & \text{for }   x \in (\frac{a_1+a_2}{2} + \epsilon, a_2 - \epsilon).
\end{array} \right.
\end{equation*}

Next, denote by $ C_0^\infty ([a_1,a_2]) $ the set of smooth functions $ [a_1,a_2] \rightarrow \mathbb{R} $, such that
the derivatives of any order (including zero) vanish at the boundary points $ a_1$ and $a_2 $.
Fix $ g \in C_0^\infty ([a_1,a_2]) $, and define $ h_{\epsilon}(x)$ by:
$$ h_{\epsilon}(x) = g'(x)\chi_1^{\epsilon}(x) + A\chi_2^{\epsilon}(x) - B\chi_3^{\epsilon}(x),$$
where $A$ and $B$ are two constants given by:
$$ A = \frac{
g(\frac{a_1+a_2}{2}) - \int_{a_1}^{\frac{a_1+a_2}{2}} g'(x) \chi_1^{\epsilon}(x) \, dx  }{ \int_{a_1}^{\frac{a_1+a_2}{2}} \chi_2^{\epsilon}(x) \, dx } ,$$ and
$$ B = \frac{ g(\frac{a_1+a_2}{2}) + \int_{\frac{a_1+a_2}{2}}^{a_2} g'(x) \chi_1^{\epsilon}(x) \, dx  }{ \int_{\frac{a_1+a_2}{2}}^{a_2} \chi_3^{\epsilon}(x) \, dx } .$$
Note that one has: $$ \int_{a_1}^{\frac{a_1+a_2}{2}} h_\epsilon (x) dx = g(\frac{a_1+a_2}{2}) ,$$ and  $$ \int_{\frac{a_1+a_2}{2}}^{a_2} h_\epsilon (x) dx = - g(\frac{a_1+a_2}{2}) .$$ Let $ g_\epsilon : [a_1,a_2] \rightarrow \mathbb{R} $ be the unique function such that $ g_\epsilon '(x) = h_\epsilon(x) $, and $ g_\epsilon(a_1) = 0 $.
It follows from the definition that
$$g_\epsilon(x) = g(x), \ \ {\rm for} \ \ x \in [a_1,a_1+\epsilon] \cup [\frac{a_1+a_2}{2} - \epsilon, \frac{a_1+a_2}{2} + \epsilon ] \cup [a_2 - \epsilon,a_2], $$ and in particular, $ g_\epsilon \in C_0^\infty ([a_1,a_2]) $.
Note moreover that if $ g(x) $ satisfies $ g'(x) > 0 $ for $ x \in (a_1,\frac{a_1+a_2}{2}) $ and $ g'(x) < 0 $ for $ x \in (\frac{a_1+a_2}{2},a_2) $, then so is $ g_\epsilon(x) $ i.e., $ g_\epsilon '(x) > 0 $ for $ x \in (a_1,\frac{a_1+a_2}{2}) $ and $ g_\epsilon '(x) < 0 $ for $ x \in (\frac{a_1+a_2}{2},a_2) $.

Next, 
we define a family of operators $ L_\epsilon : C_0^\infty ([a_1,a_2]) \rightarrow C_0^\infty ([a_1,a_2]) $, by 
the requirement that $ L_\epsilon g = g_\epsilon $. It is not hard to check that $ L_\epsilon $ is linear, and continuous in the $ C^\infty $-topology. Moreover, let
$$ {\cal I}_{\epsilon} := [a_1,a_1+2\epsilon] \cup [\frac{a_1+a_2}{2} - 2\epsilon, \frac{a_1+a_2}{2} + 2\epsilon ] \cup [a_2 - 2\epsilon,a_2]$$
Then, from the definition of $g_{\epsilon}$, and the fact that $\chi_2^{\epsilon}$ and $\chi_3^{\epsilon}$ has disjoint support, one has the following estimate:
  $$ \max_{[a_1,a_2]} |g_\epsilon '(x)| \leqslant \max_{x \in {\cal I}_{\epsilon}  } |g'(x)| + \max \{|A|, |B|\} .$$
Furthermore, from the definition of $A$ and $B$ one has:
$$ |A| , |B| \leqslant \frac{ |g(\frac{a_1+a_2}{2})| + 4\epsilon \max_{ x \in {\cal I}_{\epsilon}   } |g'(x)| }{ \frac{a_2 - a_1}{2} - 4 \epsilon} .$$
Therefore, we conclude that
\begin{equation} \label{eq-estimate-on-max-of-g-eps} \max_{[a_1,a_2]} |g_\epsilon '(x)| \leqslant \frac{ |g(\frac{a_1+a_2}{2})| }{ \frac{a_2 - a_1}{2} - 4 \epsilon }
\, + \, \left( 1 + \frac{4\epsilon}{ \frac{a_2 - a_1}{2} - 4 \epsilon } \right) \max_{ x \in {\cal I}_{\epsilon}   } |g'(x)| . \end{equation}
Next, define
$ H_\epsilon \colon [a_1,a_2] \times [ \alpha_2, \beta_2 ] \rightarrow \mathbb{R} $ by $ H_\epsilon (\cdot,y) = L_\epsilon G_1(\cdot,y) $ for every $ y \in [ \alpha_2 , \beta_2 ] $. Note that
$ H_{\epsilon}|_{ [a_1,a_2] \times [ \alpha_2, \beta_2 ]} $ is a smooth function.
Moreover, if $ \epsilon > 0 $ is small enough, then from~$(\ref{eq-estimate-on-max-of-g-eps})$ we conclude that $$ | \frac{\partial}{\partial x} H_\epsilon (x,y) | \leqslant
 \frac{ {3\| G_1 \|_{\infty}} }{a_2 - a_1} =  \frac{ {3 \| G \|_{\infty} }}{a_2 - a_1}, \ \ {\rm for \ every \ } (x,y) \in [a_1,a_2] \times [ \alpha_2, \beta_2 ]$$ 
We fix such an $ \epsilon $, and set $ H := H_\epsilon $. From the definition of $H$ and~$(\ref{estimate-the-x-derivative-of-G1})$ one has:
\begin{equation} \label{eq-H-property}
 \left\{
\begin{array}{rl}
 \frac{\partial}{\partial x} H (x,y) > 0,  & \text{for } a_1 < x <
\frac{a_1+a_2}{2},\\[0.1in]
\frac{\partial}{\partial x} H (x,y) < 0, & \text{for }  \frac{a_1+a_2}{2} < x <
a_2,
\end{array} \right.
\end{equation}
for any $ y \in (\alpha_2,\beta_2) $. Furthermore,
\begin{equation} \label{eq-H-G_1}
 H(x,y) = G_1(x,y)
\end{equation}
for $ x \in [a_1,a_1+\epsilon] \cup [\frac{a_1+a_2}{2} - \epsilon, \frac{a_1+a_2}{2} + \epsilon ] \cup [a_2 - \epsilon,a_2] $, and $ y \in (\alpha_2,\beta_2) $. Note moreover that since the operator $ L_\epsilon $ is linear, one has that $ H(x,y) = (L_\epsilon u_1)(x) v_1(y) $ for any $ x \in [a_1,a_2] $ and $y$ being near the boundary points $ \alpha_2 $ or $ \beta_2 $.

 It follows from~$(\ref{eq-H-property})$ and~$(\ref{eq-H-G_1})$ above, that there is a unique diffeomorphism $ \Psi_2 \colon (a_1,a_2) \times (\alpha_2,\beta_2) \rightarrow (a_1,a_2) \times (\alpha_2,\beta_2) $, of the form $ \Psi_2(x,y) = (w(x,y),y) $, such that $$ H|_{(a_1,a_2) \times (\alpha_2,\beta_2)} = \Psi_2^* G_1|_{(a_1,a_2) \times (\alpha_2,\beta_2)} ,$$ and 
 $ supp(\Psi) \subset \left( [a_1+\epsilon,\frac{a_1+a_2}{2} - \epsilon] \cup [ \frac{a_1+a_2}{2} + \epsilon , a_2 - \epsilon ] \right) \times [\alpha_2,\beta_2] $.
Moreover, we have $ G_1(x,y) = u_1(x) v_1(y) $, $ H(x,y) = (L_\epsilon u_1)(x) v_1(y) $ for $ x \in [a_1,a_2] $ and $ y $ being near $ \alpha_2 $ or $ \beta_2 $. From this we conclude that $ w(x,y) $ is independent of $ y $, for $ y $ being close to $ \alpha_2,\beta_2 $. From Step I, we have $ \widetilde{\alpha}_2 < \alpha_2 < \beta_2 < \widetilde{\beta_2} $, such that $ [a_1,a_2] \times [\widetilde{\alpha}_2,\widetilde{\beta}_2] \subset V $. One can easily extend the diffeomorphism $ \Psi_2 $ to  $$ \Psi_2 : (a_1,a_2) \times (\widetilde{\alpha}_2,\widetilde{\beta}_2) \rightarrow (a_1,a_2) \times (\widetilde{\alpha}_2,\widetilde{\beta}_2) ,$$ such that $ \Psi_2 $ is the identity diffeomorphism near the boundary of $ (a_1,a_2) \times ( \widetilde{\alpha}_2,\widetilde{\beta}_2) $. Then we can extend $ \Psi_2 $ by the identity to be a diffeomorphism $ \Psi_2 : V \rightarrow V $. We have $ H = \Psi_2^* G_1 $.


Finally, denote $ \Phi = \Psi_1 \Psi_2 \colon V \rightarrow V$. The diffeomorphism $\Phi$ is compactly supported inside $ V $, and
$ H = \Phi^* G$ satisfies $$ | \frac{\partial}{\partial x} H | \leqslant
\frac{3 \|G\|_{\infty}}{a_2 - a_1}, \ \ {\rm and} \ \  supp(H) = supp(G)$$
This completes the proof of the lemma.
\end{proof}

We are finally in a position to prove Lemma~\ref{lemma-about-decomposition}.

\begin{proof}[{\bf Proof of Lemma~\ref{lemma-about-decomposition}}]
Let $f : {\mathbb R}^2 \rightarrow {\mathbb R}$ be a smooth function with $\|f\|_{\infty} \leqslant 1$,
and $supp(f) \subset int(R)$. 
We fix some parameters $\alpha_i,\alpha_i',\beta_i,\beta_i'$, where $i=1,2$, such that $0 < \alpha_i < \alpha'_i< \beta_i' < \beta_i < L_i$, for $i=1,2$; $ \beta_1 - \alpha_1 >
\frac{3}{4} L_1 $; and
 $$supp(f) \subset int( [\alpha'_1,\beta'_1]
\times [\alpha'_2,\beta'_2]) \subset int( [\alpha_1,\beta_1] \times
[\alpha_2,\beta_2]) \subset int(R)$$
Moreover, we choose a smooth function $u: [0,L_1] \rightarrow {\mathbb R}$,
such that $u(x) = e^{{\frac {-1} {x-\alpha_1}}}$ near
$\alpha_1$, $u(x) = e^{{\frac {-1} {\beta_1-x}}}$ near $\beta_1$, $u(x) = 1$ on $[\alpha_1',\beta_1']$, and
$ \| u \|_{\infty} = 1 $.
Similarly, we take $v: [0,L_2] \rightarrow {\mathbb R}$, with $v(y) = e^{{\frac {-1} {y-\alpha_2}}}$ near
$\alpha_2$, $v(y) = e^{{\frac {-1} {\beta_2-y}}}$ near $\beta_2$, $v(y) = 2$ on $[\alpha_2',\beta_2']$, and
$ \| v \|_{\infty} = 2 $. Next, we consider the decomposition $f = F_1-F_2$, where
$$F_1(x,y) = f(x,y)+u(x)v(y), \ {\rm  and } \ \ F_2(x,y)=u(x)v(y)$$
We have $ \| F_{\varsigma}(x,y) \|_{\infty} \leqslant 3 $ for $\varsigma \in \{1,2\}$.
From 
Lemma~\ref{L:Localization2} it follows that there is $\epsilon_0 > 0$ such that for any $0 < \epsilon < \epsilon_0$, and
 any $a \in {\mathbb R}$, the following holds: let $ G_{\varsigma}(x,y) =
F_{\varsigma}(x,y) \phi(\frac{x-a}{\epsilon}) $, where $\varsigma \in \{1,2\}$ (we may and shall assume in what follows that
$G_{\varsigma} \neq 0 $). Take $V^{\varsigma}$ to be any open neighborhood of $U^{\varsigma} := supp(G_{\varsigma}) =
[a_1^{\varsigma},a_2^{\varsigma}] \times [\alpha_2,\beta_2] $. Then, there is a compactly
supported diffeomorphism $ \Phi^{\varsigma} : V^{\varsigma} \rightarrow V^{\varsigma} $, such that $ H_{\varsigma}
= (\Phi^{\varsigma})^*G_{\varsigma} $ satisfies \begin{equation} \bigl  | {\textstyle \frac{\partial}{\partial x}} H_{\varsigma} \bigr | \leqslant {\textstyle \frac{9}{a_2^{\varsigma} - a_1^{\varsigma}}}, \ {\rm and } \ supp(H_{\varsigma}) = supp(G_{\varsigma}) \end{equation}

Fix  $0 <  \epsilon < \epsilon_0$ as above. For $ n \in \mathbb{Z} $ and $\varsigma \in \{1,2\}$ denote $
G_{\varsigma,n} = F_{\varsigma}(x,y) \phi(\frac{x-n\epsilon}{\epsilon}) $. Note that $
F_{\varsigma} = \sum_{n \in \mathbb{Z}} G_{\varsigma,n} $, and that only finitely many summands
 are not identically zero.
For $
i=1,2,3,4 $,  let $ K_{\varsigma,i} = \sum_{j \in \mathbb{Z}} G_{\varsigma,i+4j} $.
Note moreover that the supports of all the non-zero summands of $ K_{\varsigma,i}$ are pairwise disjoint, and 
$ F_{\varsigma} = \sum_{i=1}^4 K_{\varsigma,i}$, and thus $ f = \sum_{\varsigma=1}^2 \sum_{i=1}^4 K_{\varsigma,i}$.
Next, we fix  $ 1
\leqslant i_0 \leqslant 4 $. Consider 
$ K_{\varsigma,i_0}
= \sum_{j \in \mathbb{Z}} G_{\varsigma,i_0 + 4j} $, and choose pairwise
disjoint open neighborhoods $ V^{\varsigma}_{i_0,j} \supset supp( G_{\varsigma,i_0 + 4j} ) $ of those summands
which are not identically zero. Now, apply  
Lemma~\ref{L:Localization2} to each element in the decomposition $
K_{\varsigma,i_0} = \sum_{j \in \mathbb{Z}} G_{\varsigma,i_0 + 4j} $. We obtain that for
any non-zero summand $ G_{\varsigma,i_0 + 4j} $, there is a compactly
supported diffeomorphism $ \Phi^{\varsigma}_{i_0,j} : V^{\varsigma}_{i_0,j} \rightarrow V^{\varsigma}_{i_0,j} $, such that
the function $ H^{\varsigma}_{i_0,j} = (\Phi^{\varsigma}_{i_0,j})^* \,G_{\varsigma,i_0 + 4j} $ satisfies
\begin{equation} {\displaystyle \left |
\tfrac{\partial}{\partial x} H^{\varsigma}_{i_0,j} \right | \leqslant
{\displaystyle \frac{9}{{\textstyle   \mu(\pi_x(supp(G_{\varsigma,i_0 + 4j})))} }  }}, \ {\rm  and} \
supp(H^{\varsigma}_{i_0,j}) = supp(G_{\varsigma,i_0 + 4j}) \end{equation}
Here $\pi_x$ denotes the projection to the interval $[0,L_1]$, and $\mu$ is the  Lebesgue measure.
Note that the supports $
\{ supp(\Phi^{\varsigma}_{i_0,j}) \}$ are mutually disjoint. We shall denote by $ \widetilde{\Phi}^{\varsigma}_{i_0} $
the composition of all the $ \Phi^{\varsigma}_{i_0,j}$'s for which
 $G_{\varsigma,i_0 + 4j} \neq 0 $. Moreover, we denote by $ \Pi^{\varsigma}_{i_0,k} $, $ k=1,2,...,M_{i_0} $ all
the non-empty supports among $\{ supp(G_{\varsigma,i_0 + 4j}) \}$. Note that each $\Pi^{\varsigma}_{i_0,k}$ is a
rectangle contained in $ [\alpha_1,\beta_1] \times [\alpha_2,\beta_2] $. Consider a sequence of rectangles
$$ \widetilde {\Pi}^{\varsigma}_{i_0,k}  := [\alpha_1,\beta_1] \times [\alpha_2 + (2k-1) \frac{\beta_2-\alpha_2}{2M_{i_0}},\alpha_2 + 2k \frac{\beta_2-\alpha_2}{2M_{i_0}}] $$
It is not hard to check that there exists a diffeomorphism $ \Psi^{\varsigma}_{i_0}: R \rightarrow R $,
such that $\Psi^{\varsigma}_{i_0}(\widetilde {\Pi}^{\varsigma}_{i_0,k}) = \Pi^{\varsigma}_{i_0,k} $, and moreover that on each  $\widetilde {\Pi}^{\varsigma}_{i_0,k} $ it coincides with a linear contraction on the directions
of the axes, composed with a translation. As a
result, for $ k_{\varsigma,i_0} := (\Psi_{i_0}^{\varsigma})^* (\widetilde{\Phi}_{i_0}^{\varsigma})^* K_{\varsigma,i_0}
$, one has $ | \frac{\partial}{\partial x} k_{\varsigma,i_0} | \leqslant
\frac{9}{\beta_1 - \alpha_1} < \frac{12}{L_1} $. The proof of Lemma~\ref{lemma-about-decomposition} is now complete.

\end{proof}

\subsection{Theorem~\ref{Main-Thm-local-case} - the higher-dimensional case}

The proof of Theorem~\ref{Main-Thm-local-case} for arbitrary
dimension relies on the $2$-dimensional case, and on the following
proposition, the proof of which we postpone to
Subsection~\ref{proof-of-Ck-bound-lemma}.
\begin{proposition} \label{Ck-bound-lemma}
There is a finite family of functions $ {\mathcal F } \subset
C^{\infty}_c(W) $, such that:
\begin{enumerate}
 \item [(i)] Any $ f \in C^{\infty}_{c}(W) $ that can be represented as a product $ f(q,p) = \prod_{i=1}^{n} f_i(q_i,p_i) $,
 for some $ f_i \in C^{\infty}_{c}(I^2) $, satisfies that $ \| f \|_{{ \mathcal F }, \, max} \leqslant C \| f \|_{\infty} $, for some  constant $C$.
 \item [(ii)] For any $ f \in C^{\infty}_{c}(W) $, one has $ \| f \|_{{ \mathcal F }, \, max} \leqslant C \| f \|_{C^{2n+1}} $, for some  constant $C$.
\end{enumerate}
\end{proposition}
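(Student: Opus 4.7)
The plan is to take $\mathcal{F}$ to be the collection of $3^n$ product functions
\[
\mathcal{F} := \Bigl\{ \prod_{i=1}^n g_i(q_i,p_i) \;:\; g_i \in \{\mathfrak{f}_0,\mathfrak{f}_1,\mathfrak{f}_2\} \Bigr\} \subset C^\infty_c(W),
\]
where $\{\mathfrak{f}_0,\mathfrak{f}_1,\mathfrak{f}_2\}$ is the collection supplied by the 2-dimensional version of Theorem~\ref{Main-Thm-local-case} proved in Subsection~\ref{subsection-thm3.4-the-2-dim-case}.

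For part (i), given $f = \prod_i f_i$ and $\epsilon>0$, the 2-dimensional case yields, for each $i$, a sequence $f_i^{(k)} = \sum_j c_{j,k}^i\,(\Phi_{j,k}^i)^* g_{j,k}^i$ converging to $f_i$ in $C^\infty(I^2)$, with $\Phi_{j,k}^i \in \mathrm{Ham}_c(I^2,\omega)$, $g_{j,k}^i \in \{\mathfrak{f}_0,\mathfrak{f}_1,\mathfrak{f}_2\}$, and $\sum_j |c_{j,k}^i| \leq C\|f_i\|_\infty + \epsilon$ for all $k$ large enough. Since $C^\infty$-convergence is preserved under products (Leibniz), $\prod_i f_i^{(k)} \to f$ in $C^\infty(W)$, and the expansion
\[
\prod_i f_i^{(k)} = \sum_{(j_1,\dots,j_n)} \Bigl(\prod_i c_{j_i,k}^i\Bigr) \bigl( \Phi_{j_1,k}^1 \times \cdots \times \Phi_{j_n,k}^n \bigr)^{\!*} \prod_i g_{j_i,k}^i
\]
realizes each $\prod_i f_i^{(k)}$ as an element of $\mathcal{L}_{\mathcal{F}}$: the product symplectomorphism lies in $\mathrm{Ham}_c(W,\omega)$ (generated by the sum of the 2D Hamiltonians) and the product of $g$'s lies in $\mathcal{F}$. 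The absolute sum of coefficients factorizes as $\prod_i \sum_{j_i}|c_{j_i,k}^i| \leq \prod_i(C\|f_i\|_\infty + \epsilon)$. Since $\|f\|_\infty = \prod_i \|f_i\|_\infty$, letting $k \to \infty$ and then $\epsilon \to 0$ gives $\|f\|_{\mathcal{F},\max} \leq C^n \|f\|_\infty$.

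For part (ii), I would use a Fourier series argument. Fix a product cutoff $\chi(q,p) = \prod_i \eta(q_i)\eta(p_i) \in C^\infty_c(W)$ with $\eta \in C^\infty_c(I)$, $\|\eta\|_\infty \leq 1$, and $\chi \equiv 1$ on $\mathrm{supp}(f)$. Extending $f$ by zero and regarding it as a smooth periodic function on $\mathbb{R}^{2n}/(2L\mathbb{Z})^{2n}$, expand it in the real Fourier basis
\[
f(x) = \sum_{k \in \mathbb{Z}^{2n}} \bigl( a_k \cos(\pi k\cdot x/L) + b_k \sin(\pi k\cdot x/L) \bigr).
\]
Integrating by parts $2n+1$ times gives $|a_k|,|b_k| \leq C_1 \|f\|_{C^{2n+1}}/(1+|k|)^{2n+1}$. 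Using $\cos(\sum_i \alpha_i) = \mathrm{Re}\prod_i e^{i\alpha_i}$ (and the analogous formula for $\sin$), each function $\chi(x)\cos(\pi k\cdot x/L)$ or $\chi(x)\sin(\pi k\cdot x/L)$ decomposes into at most $2^n$ real product functions of the form $\pm \prod_i \eta(q_i)\eta(p_i)\,\tau_i^k(\alpha_i^k q_i + \beta_i^k p_i)$ with $\tau_i^k \in \{\cos,\sin\}$, each of $L_\infty$-norm at most $1$. Applying part (i) to each summand yields the uniform bounds $\|\chi\cos(\pi k\cdot x/L)\|_{\mathcal{F},\max},\,\|\chi\sin(\pi k\cdot x/L)\|_{\mathcal{F},\max} \leq C_2$. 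The partial sums $\chi S_N$ of the Fourier series converge to $\chi f = f$ in $C^\infty_c(W)$, and by the triangle inequality satisfy
\[
\|\chi S_N\|_{\mathcal{F},\max} \leq C_2 \sum_{k \in \mathbb{Z}^{2n}}(|a_k|+|b_k|) \leq C_3 \|f\|_{C^{2n+1}} \sum_{k \in \mathbb{Z}^{2n}} (1+|k|)^{-(2n+1)},
\]
the last sum being finite because $2n+1 > 2n$. Invoking Remark~\ref{rmk-about-max-norm} yields $\|f\|_{\mathcal{F},\max} \leq C_4 \|f\|_{C^{2n+1}}$.

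The main obstacle is to make the Fourier step in part (ii) compatible with part (i). The standard Fourier basis consists of exponentials that are not compactly supported in $W$, so one must introduce a cutoff—but a \emph{product} cutoff $\chi$, not an arbitrary one, in order to preserve the product-function structure required to invoke part (i). The Sobolev-type decay order $2n+1$ of the Fourier coefficients is then precisely what is needed to ensure absolute convergence of the series indexed by $\mathbb{Z}^{2n}$, which is why this specific order of polynomial control appears in the final estimate.
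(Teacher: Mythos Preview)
Your proof is correct and follows essentially the same route as the paper: the family $\mathcal{F}$ is the $3^n$ product functions built from $\{\mathfrak{f}_0,\mathfrak{f}_1,\mathfrak{f}_2\}$, part (i) is proved by taking products of the 2-dimensional approximants, and part (ii) is obtained by Fourier expansion on the torus with a product cutoff and the $(2n+1)$-fold integration-by-parts decay estimate on the coefficients. The only cosmetic difference is that the paper uses the tensor-product trigonometric basis directly, whereas you use $\cos(\pi k\cdot x/L)$, $\sin(\pi k\cdot x/L)$ and then expand via the angle-addition formula into at most $2^n$ product functions; your explicit insistence that the cutoff $\chi$ be a \emph{product} function is in fact a point the paper leaves implicit.
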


\begin{remark} \label{rmk-about-the-collection-of-functions}
{\rm In what follows, we fix ${\mathcal F}$ to be the collection of
functions given by Proposition~\ref{Ck-bound-lemma} above. Moreover,
in order to simplify the presentation, we shall use $ x_1 = q_1, x_2
= p_1, ...,x_{2n-1}=q_n,x_{2n} = p_n $, as another notation for the
coordinates of a point $ x = (q_1,p_1,...,q_{2n},p_{2n})$ in the
$2n$-dimensional cube $W=(-L,L)^{2n}$. }
\end{remark}
\begin{proof}[{\bf Proof of Theorem~\ref{Main-Thm-local-case} (\it the higher dimensional case)}]
For simplicity, the proof of the theorem is divided into two steps:

\noindent {\bf Step I  ({\it Decomposing the function):}} We
consider a smooth function $ r : [-1,1] \rightarrow \mathbb{R} $,
satisfying:
\begin{equation*}
r(t) = \left\{
\begin{array}{rl}
1 & \text{for } t \in [-{\frac 1 3},{\frac 1 3}],\\[0.05in]
0 & \text{for } t \in  [-1,-{\frac 2 3}] \cup [{\frac 2 3},1],
\end{array} \right.
\end{equation*}
and such that $ \sum_{i \in \mathbb{Z}} r(t+i) = 1 $, and $\|r
\|_{\infty}=1$. For any $ \epsilon > 0 $, we denote $$ {\cal
R}^{\epsilon}(x)={\cal R}^{\epsilon}(x_1,x_2,...,x_{2n}) =
\prod_{i=1}^{2n} r \Bigl (\frac{x_i}{\epsilon} \Bigr) $$ Clearly,
one has
 $ \sum_{v \in \epsilon \mathbb{Z}^{2n}} {\cal
R}^{\epsilon}(x-v) = \Id(x) $. Moreover, for a sufficiently small
$\epsilon
>0$,
and a point $w \in { \mathfrak X}:= \{ 0,1,2,3 \}^{2n}$, we consider a finite grid  $\Gamma^{\epsilon}_{w}  \subset W$ given by 
\begin{equation} \label{def-of-the-grid-Gamma} \Gamma^{\epsilon}_{w} =
\epsilon w + 4 \epsilon \mathbb{Z}^{2n} \cap (-L+
3\epsilon,L-3\epsilon)^{2n} \end{equation} 
Furthermore, we define a partition function ${\cal
R}_{w}^{\epsilon}(x)$ by: $$ {\cal R}_{w}^{\epsilon}(x) = \sum_{v
\in \Gamma^{\epsilon}_w} {\cal R}^{\epsilon}(x-v)$$ Note that $
\sum_{w \in {\mathfrak X}} {\cal R}_{w}^{\epsilon}(x) = \Id(x) $ for
any $ x \in (-L+4\epsilon,L-4\epsilon)^{2n} $. Next, consider an
arbitrary function $ f \in C^{\infty}_{c}(W)$. Take $ \epsilon_0 > 0
$ with $ supp \, (f) \subset (-L+4\epsilon_0,L-4\epsilon_0)^{2n} $,
and fix $ \epsilon < \epsilon_0 $. For any $ w \in {\mathfrak X}$,
denote $ f_w(x) =
 {\cal R}_{w}^{\epsilon}(x) f(x) $. Note that $$ f(x) =  \sum_{w \in
{\mathfrak X}} f_w(x) $$ Moreover, for a fix $ w \in {\mathfrak X} $
one has \begin{equation} \label{def-of-f_w}  f_w(x) = \sum_{v \in
\Gamma^{\epsilon}_w} {\cal R}^{\epsilon}(x-v)f(x), \end{equation}
where the support of each summand satisfies $$ supp \, \bigl ({\cal
R}^{\epsilon}(x-v)f(x) \bigr ) \subset v +   \Bigl  [- {\frac
{2\epsilon} 3}  ,{\frac {2\epsilon} 3} \Bigr]^{2n}, \ \ {\rm for} \
v \in \Gamma^{\epsilon}_w
$$

\noindent {\bf Step II } ({\it Estimating the norm $\| f
\|_{{\mathcal F}, \, \max}$}):
Fix $ v \in \Gamma^{\epsilon}_w $, and consider the decomposition of
$f \in C^{\infty}_c(W)$ to a Taylor polynomial of order $2n+1$ and a
remainder, around the point $ v $: $$ f(x) = P_{2n+1}^v(x-v) +
R_{2n+1}^v(x-v)$$ It follows  from~($\ref{def-of-f_w}$)
above that $ f_w(x) = g_w(x) + h_w(x) $, where
$$ g_{w}(x) = \sum_{v \in \Gamma^{\epsilon}_w} {\cal
R}^{\epsilon}(x-v) P_{2n+1}^v(x-v) , \ {\rm and}  \ \ h_{w}(x) =
\sum_{v \in \Gamma^{\epsilon}_w} {\cal R}^{\epsilon}(x-v)
R_{2n+1}^v(x-v) $$

\begin{lemma} \label{lemma-C^k-estimate-of-the-reminder}
With the above notations, there is a constant $C = C(n)$ such that
$$ \| h_{w} \|_{C^{2n+1}} \leqslant C \epsilon
\|f\|_{C^{2n+2}}$$
\end{lemma}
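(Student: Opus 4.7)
The plan is to reduce the estimate to a single summand via disjointness of supports, and then combine Leibniz's rule with the standard Taylor remainder bound.

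First I would observe that since $\mathrm{supp}(r)\subset[-\tfrac{2}{3},\tfrac{2}{3}]$, the function ${\cal R}^\epsilon(\cdot-v)$ is supported in $v+[-\tfrac{2\epsilon}{3},\tfrac{2\epsilon}{3}]^{2n}$, whereas consecutive points of $\Gamma^\epsilon_w$ are separated by $4\epsilon$. Consequently the summands $\{{\cal R}^\epsilon(x-v)R^v_{2n+1}(x-v)\}_{v\in\Gamma^\epsilon_w}$ have pairwise disjoint supports. Since the $C^{2n+1}$-norm is a supremum, it suffices to bound each such summand separately, uniformly in $v$.

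Fix $v\in\Gamma^\epsilon_w$. I would use two standard ingredients. First, from the chain rule and the fact that $r$ is a fixed compactly supported smooth function, for every multi-index $\beta$ one has $\|\partial^\beta {\cal R}^\epsilon(\cdot-v)\|_\infty \leqslant C_\beta\, \epsilon^{-|\beta|}$. Second, since $P^v_{2n+1}$ is the Taylor polynomial of $f$ of order $2n+1$ at $v$, for any multi-index $\alpha$ with $|\alpha|\leqslant 2n+1$ the function $\partial^\alpha R^v_{2n+1}(\cdot-v)$ equals the Taylor remainder of order $2n+1-|\alpha|$ for $\partial^\alpha f$ at $v$. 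The standard Taylor estimate then yields, on the cube $v+[-\tfrac{2\epsilon}{3},\tfrac{2\epsilon}{3}]^{2n}$ (where $|x-v|\leqslant C\epsilon$),
$$
\bigl|\partial^\alpha R^v_{2n+1}(x-v)\bigr| \;\leqslant\; C|x-v|^{2n+2-|\alpha|}\,\|f\|_{C^{2n+2}} \;\leqslant\; C\epsilon^{2n+2-|\alpha|}\,\|f\|_{C^{2n+2}}.
$$

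Applying Leibniz's rule to ${\cal R}^\epsilon(x-v)R^v_{2n+1}(x-v)$ and combining the two bounds, for every $|\alpha|\leqslant 2n+1$ I get
$$
\bigl|\partial^\alpha\bigl[{\cal R}^\epsilon(x-v)R^v_{2n+1}(x-v)\bigr]\bigr| \;\leqslant\; \sum_{\beta\leqslant\alpha}\binom{\alpha}{\beta}\, C_\beta\epsilon^{-|\beta|}\cdot C\epsilon^{2n+2-|\alpha-\beta|}\|f\|_{C^{2n+2}} \;\leqslant\; C\epsilon^{2n+2-|\alpha|}\|f\|_{C^{2n+2}},
$$
where the powers of $\epsilon$ combine cleanly because $-|\beta|+(2n+2-|\alpha-\beta|)=2n+2-|\alpha|$, independently of $\beta$. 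Using $2n+2-|\alpha|\geqslant 1$ (and $\epsilon\leqslant 1$) this is bounded by $C\epsilon\|f\|_{C^{2n+2}}$. Taking the supremum over $|\alpha|\leqslant 2n+1$ and invoking the disjointness of supports established in the first paragraph yields the claimed estimate. There is no substantive obstacle; the only step that requires care is the exponent bookkeeping in the Leibniz sum, where the loss of $\epsilon^{-|\beta|}$ from differentiating the cutoff is exactly compensated by the Taylor gain $\epsilon^{2n+2-|\alpha-\beta|}$.
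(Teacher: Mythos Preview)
Your proof is correct and follows essentially the same approach as the paper: reduce to a single summand via disjointness of supports, bound derivatives of ${\cal R}^\epsilon$ by $C\epsilon^{-|\beta|}$, bound derivatives of the remainder using that $\partial^\alpha R^v_{2n+1}$ is itself a Taylor remainder for $\partial^\alpha f$, and combine via Leibniz. The only cosmetic difference is that the paper packages the Leibniz step as $\|{\cal R}^\epsilon\|_{C^k}\cdot\|R^v_{2n+1}\|_{C^{2n+1-k}}\leqslant C\epsilon^{-k}\cdot C\epsilon^{k+1}=C\epsilon$ directly, whereas you track the finer exponent $\epsilon^{2n+2-|\alpha|}$ and then invoke $\epsilon\leqslant 1$; both are equivalent in the regime $\epsilon\to 0$ where the lemma is applied.
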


\begin{proof}[\bf Proof of Lemma~\ref{lemma-C^k-estimate-of-the-reminder}]
From the fact that the family
$\{ {\cal R}^{\epsilon}(x-v) R_{2n+1}^v(x-v)   \}_{v \in
\Gamma^{\epsilon}_w}$ has mutually disjoint support, and the
definition of the norm $\| \cdot \|_{C^{2n+1}}$, it follows that
there is a constant $C$ (depending on the dimension) such that
\begin{eqnarray*}
\| h_{w}(x) \|_{C^{2n+1}}  & \leq & 
\max_{v \in \Gamma^{\epsilon}_w} \, \| {\cal R}^{\epsilon}(x-v)
R_{2n+1}^v(x-v) \|_{C^{2n+1}} \\ & \leq & C(n) \,  \max_{v \in \Gamma^{\epsilon}_w} \, \Bigl ( \max_{0 \leq k \leq 2n+1} \| {\cal R}^{\epsilon}(x-v) \|_{C^{k}} \, \| R_{2n+1}^v(x-v) \|_{C^{2n+1-k}} \Bigr) 
\end{eqnarray*}
Note that from the definition of $ {\cal R}^{\epsilon}$ it follows
that for every $0 \leq k \leq 2n+1$, one has $$\| {\cal R}^{\epsilon}(x-v) \|_{C^{k}} \leqslant C' \, \epsilon^{-k}, 
$$ for some constant $C'$ (independent of $k$). 
Note moreover, that for $0 \leq k \leq 2n+1,$ 
\begin{equation} \label{eq-about-the-estimate-of-the-reminder} \| R_{2n+1}^v(x-v) \|_{C^{2n+1-k}} \leqslant C'' \, \|f\|_{C^{2n+2}} \, \epsilon^{1+k}, \end{equation}
for some constant $C''$. Indeed, let $\alpha$ be a multiindex with
$|\alpha| = 2n+1-k$, and consider the order-$k$ Taylor's expension
of $\partial^{\alpha}f$
near the point $v$. 
The remainder equals to $\partial^{\alpha} R_{2n+1}^v(x-v)$, and the
estimate~$(\ref{eq-about-the-estimate-of-the-reminder})$ follows
from the standard bound on the size of the remainder. This completes
the proof of the lemma.
\end{proof}

\begin{corollary} \label{cor-about-max-norm-of-the-reminder} {\rm
From Proposition~\ref{Ck-bound-lemma} (ii), and
Lemma~\ref{lemma-C^k-estimate-of-the-reminder}, we conclude that:}
\begin{equation} \label{eq-estimate-on-h_j} \| h_w \|_{{\mathcal F}, \, max} \leqslant C \epsilon \|f\|_{C^{2n+2}}, \ {\rm  for \ some \ constant} \ C=C(n)  \end{equation}
 \end{corollary}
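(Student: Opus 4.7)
The plan is that this corollary is essentially a one-line consequence of chaining the two cited results, so the proof proposal is very short. First I would verify that $h_w$ lies in $C^{\infty}_c(W)$, which is where Proposition~\ref{Ck-bound-lemma}~(ii) applies. By construction, $h_w(x) = \sum_{v \in \Gamma^{\epsilon}_w} {\cal R}^{\epsilon}(x-v) R_{2n+1}^v(x-v)$, and each summand is smooth with support contained in $v + [-2\epsilon/3,\, 2\epsilon/3]^{2n}$. Since $\Gamma^{\epsilon}_w \subset (-L + 3\epsilon,\, L - 3\epsilon)^{2n}$ by~$(\ref{def-of-the-grid-Gamma})$, each such support is contained in $W$, and hence $h_w \in C^{\infty}_c(W)$.

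Next I would apply Proposition~\ref{Ck-bound-lemma}~(ii) to $h_w$ to obtain $\| h_w \|_{{\mathcal F},\,\max} \leqslant C_1 \| h_w \|_{C^{2n+1}}$, and then invoke Lemma~\ref{lemma-C^k-estimate-of-the-reminder} to replace the right-hand side by $C_1 C_2\, \epsilon \,\|f\|_{C^{2n+2}}$. Setting $C = C_1 C_2$ yields the inequality~$(\ref{eq-estimate-on-h_j})$.

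There is no serious obstacle. The only conceptual point worth flagging is the compatibility of the two estimates: Proposition~\ref{Ck-bound-lemma}~(ii) controls $\|\cdot\|_{{\mathcal F},\,\max}$ by the $C^{2n+1}$-norm, while Lemma~\ref{lemma-C^k-estimate-of-the-reminder} controls the $C^{2n+1}$-norm of the remainder part $h_w$ by $\epsilon \|f\|_{C^{2n+2}}$. The choice of Taylor order $2n+1$ in the decomposition $f = P^v_{2n+1} + R^v_{2n+1}$ was made precisely so that these two estimates mesh, producing the linear-in-$\epsilon$ bound needed later in the proof of Theorem~\ref{Main-Thm-local-case}.
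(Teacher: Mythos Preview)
Your proposal is correct and matches the paper's approach exactly: the corollary is stated without a separate proof precisely because it is the immediate chaining of Proposition~\ref{Ck-bound-lemma}~(ii) with Lemma~\ref{lemma-C^k-estimate-of-the-reminder}, as you describe. Your added remarks verifying $h_w \in C^{\infty}_c(W)$ and explaining why the Taylor order $2n+1$ was chosen are helpful expository details that the paper leaves implicit.
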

To complete the proof of the theorem we shall need the following
proposition:
\begin{proposition} \label{estimate-the-max-norm-of-the-polynomial-part}
There is a constant $C = C(n)$ such that
 \begin{equation} \label{eq-estimate-of-g_j} \| g_{w}
\|_{{\mathcal F}, \, max} \leqslant C \bigl ( \sum_{i=0}^{2n+1}
\|f\|_{C^i} \epsilon^i \bigr) \end{equation}
\end{proposition}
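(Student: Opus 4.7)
The plan is to expand $g_w$ by the Taylor polynomial into a finite sum indexed by multi-indices $\alpha$ with $|\alpha|\leq 2n+1$, and bound each resulting term via Proposition~\ref{Ck-bound-lemma}(i), exploiting the disjointness of the bump supports and the product structure of the grid $\Gamma_w^\epsilon$.

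First, expanding $P_{2n+1}^v(x-v)=\sum_{|\alpha|\leq 2n+1}\frac{\partial^\alpha f(v)}{\alpha!}(x-v)^\alpha$ and collecting by $\alpha$ gives
$$g_w(x) = \sum_{|\alpha|\leq 2n+1} g_w^\alpha(x), \qquad g_w^\alpha(x) := \sum_{v\in\Gamma_w^\epsilon}\frac{\partial^\alpha f(v)}{\alpha!}\,\mathcal{R}^\epsilon(x-v)(x-v)^\alpha.$$
Since the number of multi-indices is bounded in terms of $n$, it suffices to show that for each such $\alpha$,
$$\|g_w^\alpha\|_{\mathcal{F},\,max} \leq C(n)\,\|f\|_{C^{|\alpha|}}\,\epsilon^{|\alpha|}.$$

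Next, each bump $\mathfrak{p}_\alpha^v(x):=\mathcal{R}^\epsilon(x-v)(x-v)^\alpha$ factors, in the pair-of-coordinates sense of Proposition~\ref{Ck-bound-lemma}(i), as
$$\mathfrak{p}_\alpha^v(x) = \prod_{i=1}^n \Bigl[r\bigl(\tfrac{x_{2i-1}-v_{2i-1}}{\epsilon}\bigr)(x_{2i-1}-v_{2i-1})^{\alpha_{2i-1}}\cdot r\bigl(\tfrac{x_{2i}-v_{2i}}{\epsilon}\bigr)(x_{2i}-v_{2i})^{\alpha_{2i}}\Bigr],$$
so each $\mathfrak{p}_\alpha^v$ is itself a product function with $L_\infty$-norm bounded by $C\,\epsilon^{|\alpha|}$. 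The supports $\{v+[-\tfrac{2\epsilon}{3},\tfrac{2\epsilon}{3}]^{2n}\}_{v\in\Gamma_w^\epsilon}$ are pairwise disjoint, and the grid $\Gamma_w^\epsilon$ is itself a Cartesian product of $n$ two-dimensional sub-grids $\Gamma_i\subset\mathbb{R}^2$ indexed by the symplectic pairs $(q_i,p_i)$.

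The key observation is that for a rank-one weight pattern $c_v=\prod_{i=1}^n a_i(v_{2i-1},v_{2i})$ the weighted sum
$$\sum_{v\in\Gamma_w^\epsilon} c_v\,\mathfrak{p}_\alpha^v(x) \;=\; \prod_{i=1}^n \Bigl(\sum_{v_i\in\Gamma_i} a_i(v_i)\,\mathfrak{p}_{\alpha,i}^{v_i}(x_{2i-1},x_{2i})\Bigr)$$
factors as a genuine product function whose $L_\infty$-norm is $\leq\epsilon^{|\alpha|}\prod_i\max_{v_i}|a_i(v_i)|$, so Proposition~\ref{Ck-bound-lemma}(i) immediately yields the desired bound. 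The combinatorial part of the argument then decomposes the general coefficient tensor $c_v=\partial^\alpha f(v)/\alpha!$ (whose sup-norm over $v\in\Gamma_w^\epsilon$ is bounded by $\|f\|_{C^{|\alpha|}}/\alpha!$) into a number of such rank-one reductions that is bounded independently of $\epsilon$, using the smoothness of $\partial^\alpha f$ together with Hamiltonian conjugations of $W$ that rearrange the disjoint bump supports while preserving the $\mathcal{L}_{\mathcal{F}}$-norm, and approximating $g_w^\alpha$ in the $C^\infty$-topology by such rearranged sums.

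The main obstacle is precisely this combinatorial reduction. A naive triangle inequality over the $\sim\epsilon^{-2n}$ grid points would inflate the estimate by a catastrophic factor of $\epsilon^{-2n}$; one must instead consolidate the weighted grid sum, using the $n$-fold product structure of $\Gamma_w^\epsilon$ and the flexibility of Hamiltonian diffeomorphisms of $W$, into a number of Hamiltonian-conjugates of product functions that is bounded purely in terms of $n$, with total weight controlled by $\|f\|_{C^{|\alpha|}}\,\epsilon^{|\alpha|}$ rather than by the grid cardinality. Once this consolidation is in place for each $\alpha$, summation over $|\alpha|\leq 2n+1$ produces precisely the claimed estimate $\|g_w\|_{\mathcal{F},\,max}\leq C\sum_{i=0}^{2n+1}\|f\|_{C^i}\epsilon^i$.
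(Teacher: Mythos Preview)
Your setup is exactly right and matches the paper: decompose $g_w=\sum_{|\alpha|\le 2n+1} g_w^\alpha$, observe that each $g_w^\alpha$ is a weighted sum over the product grid $\Gamma_w^\epsilon$ of translates of a fixed product bump $\Xi_\alpha$ with $\|\Xi_\alpha\|_\infty\le\epsilon^{|\alpha|}$, and reduce to bounding $\|\sum_v a_v\,\Xi_\alpha(\cdot-v)\|_{\mathcal F,\max}$ by $C\max_v|a_v|\cdot\|\Xi_\alpha\|_\infty$. You also correctly identify that a rank-one coefficient pattern $a_v=\prod_i a_i(v_i)$ factorises into a genuine product function, so Proposition~\ref{Ck-bound-lemma}(i) applies. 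And you correctly flag the obstacle: a bare triangle inequality loses a factor $\epsilon^{-2n}$.

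What is missing is the actual combinatorial mechanism. Your sentence ``decomposes the general coefficient tensor \dots\ into a number of such rank-one reductions that is bounded independently of $\epsilon$, using the smoothness of $\partial^\alpha f$'' is not a proof, and in fact is not how the paper proceeds. Smoothness of $\partial^\alpha f$ plays no role here: the bound must hold for \emph{arbitrary} real coefficients $a_v$, and a general $N_1\times\cdots\times N_n$ tensor has rank growing with the $N_i$, so one cannot literally write $(a_v)$ as a bounded number of rank-one tensors.

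The paper's device (Lemma~\ref{ineq-for-sum-of-translations-lemma}) is different. First, any permutation $\tau$ of $\Gamma_w^\epsilon$ is realised by an element of $\mathrm{Ham}_c(W,\omega)$ acting on the disjoint bump supports, so $\|H_\tau\|_{\mathcal F,\max}=\|H\|_{\mathcal F,\max}$; one uses this freedom to \emph{sort} the coefficients $b_1\le\cdots\le b_N$ along the lexicographic order on $\Gamma_w^\epsilon=\prod_i\gamma_i$. Then Abel summation gives
\[
H_\tau \;=\; b_N K_N \;+\;\sum_{j=1}^{N-1}(b_j-b_{j+1})\,K_j,\qquad K_j:=\sum_{l\le j}\Xi_\alpha(\cdot-v_l),
\]
and monotonicity makes the total weight $|b_N|+\sum|b_j-b_{j+1}|\le 3\max_v|a_v|$, independent of $N$. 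The remaining point is that each \emph{lexicographic initial segment} $K_j$ splits, via the base-$(N_1,\dots,N_n)$ expansion of $j$, into at most $n$ genuine product functions, each with $L_\infty$-norm $\|\Xi_\alpha\|_\infty$; Proposition~\ref{Ck-bound-lemma}(i) then gives $\|K_j\|_{\mathcal F,\max}\le nC\|\Xi_\alpha\|_\infty$ uniformly in $j$. Putting these together yields $\|g_w^\alpha\|_{\mathcal F,\max}\le 3nC\,\|f\|_{C^{|\alpha|}}\epsilon^{|\alpha|}$. This sort--telescope--lexicographic trick is the content you still need to supply.
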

Postponing the proof of Proposition~\ref{estimate-the-max-norm-of-the-polynomial-part} to
Subsection~\ref{subsection-estimate-max-norm-of-taylor}, we first complete the proof of Theorem~\ref{Main-Thm-local-case}.
From~$(\ref{eq-estimate-on-h_j})$ and~$(\ref{eq-estimate-of-g_j})$,
letting $\epsilon \rightarrow 0$,  we conclude that
$$ \| f \|_{{\mathcal F}, \, max} \leq C \|f\|_{\infty},$$
for some absolute constant $C$, and the proof is complete.
\end{proof}

\subsubsection{Proof of Proposition~\ref{Ck-bound-lemma}} \label{proof-of-Ck-bound-lemma}

\begin{proof}[{\bf Part 
(i):}] Let $W = \prod_{i=1}^n W^2_i$, where $W^2_i=(-L,L)^2 \subset
{\mathbb R}^2(q_i,p_i)$, and denote by
 ${\mathcal F}^{2} = \{ {\mathfrak f_0},{\mathfrak f_1},{\mathfrak f_2} \}$  the collection of functions constructed in
the proof of Theorem~\ref{Main-Thm-local-case} in the 2-dimensional
case. For any multi-index $\beta = (l_1,\ldots,l_n) \in {\mathfrak
X}':= \{0,1,2 \}^n $, we set ${\mathfrak f}_{\beta}(q,p) =
\prod_{k=1}^n {\mathfrak f}_{l_k}(q_k,p_k) $. In what follows we
denote by ${\mathcal F}$ the set 
$\{ {\mathfrak
f}_{\beta} \, ; \, \beta \in {\mathfrak X}' \}$.

Consider $f \in C_c^{\infty}(W^{2n})$ of the form $f(q,p)=
\prod_{i=1}^{n} f_i(q_i,p_i)$, where $f_i \in C_c^{\infty}(W_{i})$.
Let $ \epsilon > 0 $. From the proof of
Theorem~\ref{Main-Thm-local-case} in the 2-dimensional case it
follows that there exists functions $ f_{i,k} \in {\mathcal
L}_{{\mathcal F}^2} $, $ i=1,2,...,n $; $ k \in \mathbb{N} $, such
that $ f_{i,k} \xrightarrow[]{k \rightarrow \infty }
 f_i $ in the $ C^\infty$-topology, and such that $ \| f_{i,k} \|_{{\mathcal L}_{{\mathcal
F}^2}} < \| f_i \|_{{\mathcal F}^2 , \,  max} + \epsilon $. Next,
for every $1 \leq i \leq n$ and $k \in {\mathbb N}$, we decompose
\begin{equation} \label{def-of-f_ik}
 f_{i,k} = \sum_{j,l} c_{i,k}^{j,l} (\Phi_{i,k}^{j,l})^* {\mathfrak
f}_{l} , \end{equation} where $\Phi_{i,k}^{j,l} \in {\rm Ham}_c(W_i,\omega)$; $l \in \{0,1,2\}$, and, \begin{equation}
\label{eq-estimate-of-c_ik}  \sum_{j,l} | c_{i,k}^{j,l} | < \|
f_{i,k} \|_{{\mathcal L}_{{\mathcal F}^2}} + \epsilon
\end{equation} Denote $ f^k(q,p) = \prod_{i=1}^{n} f_{i,k}(q_i,p_i)
$. Clearly,  $ f^k \xrightarrow[]{k \rightarrow \infty }
f \in C^{\infty}_{c}(W)$ in the $ C^{\infty}$-topology. Moreover,
from~$(\ref{def-of-f_ik})$ it follows that $$ f^k = \sum_{\substack{
\beta = (l_1,...,l_n) \\
\gamma = (j_1,...,j_n)}}
c_{k}^{\gamma,\beta} (\Phi_{k}^{\gamma,\beta})^{*}{\mathfrak
f}_{\beta} ,$$ where $$ c_{k}^{\gamma,\beta} = \prod_{i=1}^{n}
c_{i,k}^{j_i,l_i},  \ {\rm and \ \  }
\Phi_{k}^{\gamma,\beta}(q_1,p_1,...,q_n,p_n) = \left
(\Phi_{1,k}^{j_1,l_1}(q_1,p_1),...,\Phi_{n,k}^{j_n,l_n}(q_n,p_n)
\right)$$
This shows that $ f^k \in {\mathcal L}_{\mathcal F} $, and moreover
that
\begin{equation} \label{estimate-of-LF-norm-of-fk}
\begin{split}
\| f^k \|_{{\mathcal L}_{\mathcal F}}  &  \leqslant \sum_{\substack{
\beta = (l_1,...,l_n) \\
\gamma = (j_1,...,j_n)}} |c_{k}^{\gamma,\beta}| = \prod_{i=1}^{n}
\left( \sum_{j_i,l_i} |c_{i,k}^{j_i,l_i}| \right) < \prod_{i=1}^{n}
\left( \| f_{i,k} \|_{{\mathcal L}_{{\mathcal F}^2}}
 + \epsilon \right) \\
& \leqslant  \prod_{i=1}^{n} \left( \| f_i \|_{{\mathcal F}^2 , \,
max} + 2\epsilon \right)
\end{split}
\end{equation}

Recall, that from the proof of Theorem~\ref{Main-Thm-local-case} in
the 2-dimensional case one has $$ \| f_i \|_{{\mathcal F}^2 , \,
max} \leqslant C \| f_i \|_{\infty} ,$$ for some absolute constant $
C$. Combining this with~$(\ref{estimate-of-LF-norm-of-fk})$ we
conclude that $$\| f^k \|_{{\mathcal L}_{\mathcal F}} \leqslant
\prod_{i=1}^{n} \left( C \| f_i \|_{\infty}  + 2\epsilon \right),
$$ and therefore $$ \| f \|_{{\mathcal F} , \,  max} \leqslant
\liminf_{k \rightarrow \infty} \| f^k \|_{{\mathcal L}_{\mathcal F}}
\leqslant \prod_{i=1}^{n} \left( C \| f_i \|_{\infty}  + 2\epsilon
\right) $$ In particular, for any $\epsilon>0$, one has
$$ \| f \|_{{\mathcal F} , \,  max} \leqslant \prod_{i=1}^{n}
\left( C \| f_i \|_{\infty}  + 2\epsilon \right) $$ Taking $
\epsilon \rightarrow 0 $, we obtain $$ \| f \|_{{\mathcal F} , \,
max} \leqslant C^n \prod_{i=1}^{n} \| f_i \|_{\infty} = C^n \| f
\|_{\infty} $$ This completes the proof of part (i) of
Proposition~\ref{Ck-bound-lemma}.
\end{proof}

For the proof of the second part of Proposition~\ref{Ck-bound-lemma}
we shall need the following preliminaries. Let $f$ be an integrable
function on the $m$-dimensional torus ${\mathbb T}^m$, and denote
its Fourier coefficients by
$$ \hat f_r =  {\frac 1 {(2 \pi)^m}}  \int_{{\mathbb T}^m} f(t) \, e^{ir \cdot t} \, dt,$$
where $r = (r_1,\ldots,r_m) \in {\mathbb Z}^m$, and $t =
(t_1,\ldots,t_m) \in {\mathbb T}^m$. We denote the $j^{th}$-partial
sum of the Fourier series of $f$ by
$$ S_j(f,t) = \sum_{ \max |r_l| \leq j} \hat f_r \, e^{i r \cdot t} $$
The next lemma is a well known result in Fourier analysis.
\begin{lemma} \label{lemma-about-approx-of-Fourier} Let $f \in C^{\infty}({\mathbb T}^m)$. Then 
$S_j(f)\xrightarrow[]{j \rightarrow \infty } f$ in the
$C^{\infty}$-topology and
\begin{equation} \label{eq-about-estimate-of-fourier-coeff} \sum_{r \in {\mathbb Z}^{m}} |\hat f_r| \leq A \| f \|_{C^{2n+1}}, \end{equation}
for some universal constant $A$.
\end{lemma}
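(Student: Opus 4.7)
The plan is to reduce the lemma to the standard integration-by-parts decay estimate for Fourier coefficients. First, I would observe that for any multi-index $\alpha$, integration by parts $|\alpha|$ times (one variable at a time, using the $2\pi$-periodicity to cancel boundary terms) gives $\widehat{\partial^{\alpha} f}_r = (ir)^{\alpha} \hat{f}_r$, and therefore $|r^{\alpha}| \, |\hat{f}_r| \leq \|\partial^{\alpha} f\|_{\infty} \leq \|f\|_{C^{|\alpha|}}$. Specializing to $\alpha = (2n+1) e_l$ for each coordinate direction $l = 1, \ldots, m$ (with $m = 2n$ in the paper's context), this yields the per-coordinate bound $|r_l|^{2n+1} |\hat{f}_r| \leq \|f\|_{C^{2n+1}}$, and in particular
$$ |\hat{f}_r| \leq \frac{\|f\|_{C^{2n+1}}}{|r|_{\infty}^{2n+1}}, \qquad r \neq 0, $$
where $|r|_{\infty} = \max_l |r_l|$. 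For $r = 0$ the trivial estimate $|\hat{f}_0| \leq \|f\|_{\infty}$ suffices.

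Next I would derive the summability bound $\sum_r |\hat{f}_r| \leq A \|f\|_{C^{2n+1}}$ by a shell decomposition of $\mathbb{Z}^m \setminus \{0\}$ into the level sets $\{|r|_{\infty} = k\}$, each containing at most $C_m \, k^{m-1}$ lattice points. Summing the bound above yields
$$ \sum_{r \neq 0} |\hat{f}_r| \leq \|f\|_{C^{2n+1}} \sum_{k \geq 1} \frac{C_m \, k^{m-1}}{k^{2n+1}}, $$
which converges since $m = 2n$ makes the general term $O(k^{-2})$; combined with the $r = 0$ term this gives the required estimate with a universal constant $A$ depending only on $m = 2n$.

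For $C^{\infty}$-convergence of the partial sums $S_j(f)$, I would apply the very same argument to each derivative $\partial^{\alpha} f$: since $\partial^{\alpha} f \in C^{\infty}(\mathbb{T}^m)$, the series $\sum_r (ir)^{\alpha} \hat{f}_r \, e^{ir \cdot t}$ has absolutely summable coefficients (bounded by $A \|\partial^{\alpha} f\|_{C^{2n+1}} \leq A \|f\|_{C^{|\alpha|+2n+1}}$). Term-by-term differentiation is therefore legitimate, $\partial^{\alpha} S_j(f)$ equals the corresponding partial sum for $\partial^{\alpha} f$, and Weierstrass-type convergence of absolutely summable trigonometric series gives $\partial^{\alpha} S_j(f) \to \partial^{\alpha} f$ uniformly on $\mathbb{T}^m$ for every multi-index $\alpha$, which is precisely $C^{\infty}$-convergence.

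There is no substantial obstacle: this is textbook Fourier analysis. The only point requiring attention is that the number of derivatives used, $2n+1$, be strictly greater than the dimension $m = 2n$ of the torus, so that the per-coordinate decay $|r|_{\infty}^{-(2n+1)}$ defeats the $k^{m-1}$ growth of the lattice shells. This is why the isotropic estimate $|\hat{f}_r| \leq \|f\|_{C^{2n+1}}/(1+|r|)^{2n+1}$ obtained by naive integration by parts in a single direction is sharp enough for the conclusion, and why a Sobolev-type bound would not directly suffice.
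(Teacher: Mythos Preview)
Your proposal is correct and follows essentially the same approach as the paper: derive a decay bound $|\hat f_r| \lesssim \|f\|_{C^{2n+1}}/|r|^{2n+1}$ via integration by parts, then sum over $\mathbb{Z}^m$ using that $2n+1 > m = 2n$ makes the series convergent. The only cosmetic differences are that the paper cites K\"orner's textbook for the decay estimate and uses the Euclidean norm with a spherical-shell integral comparison, whereas you carry out the integration by parts explicitly and count lattice points in $\ell^\infty$-shells; both routes are standard and equivalent.
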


\begin{proof}[{\bf Proof of Lemma~\ref{lemma-about-approx-of-Fourier}}]
 The fact that $S_j(f)\xrightarrow[]{j \rightarrow \infty } f$ in the $C^{\infty}$-topology
follows, e.g., from Theorem 33.7 in Section 79 of~\cite{Kor}, and
the fact that $\partial^{\alpha}S_j(f) = S_j(\partial^{\alpha}f)$
for every multi-index $\alpha$ and $j\geq 0$. For the
estimate~$(\ref{eq-about-estimate-of-fourier-coeff})$, we use
 Lemma 9.5 in Section 79 of~\cite{Kor} to obtain the following
upper bound for the Fourier coefficients:

\begin{equation} \label{eq-upper-bound-for-Fourier-coeff} |\hat f_r | \leq A_1 \, {\frac {\|f\|_{C^{2n+1}}} {\|r\|^{2n+1}} } \ \ {\rm for \ all} \ r \neq 0, \end{equation}
for some constant $A_1$. 
From this we conclude that
$$   \sum_{r \in {\mathbb Z}^m} |\hat f_r|  \leq  A_2 \|f\|_{C^{2n+1}} \, \int_{S^{2n-1}} \int_1^{\infty} \rho^{-2n-1} \rho^{2n-1} \, d \rho \, d \theta  \leq  A_3 \, \|f\|_{C^{2n+1}}, $$
where $A=A_3$ is a constant which depends solely on the dimension.
\end{proof}

\begin{remark} \label{rmk-about-fourier-lemma}  {\rm We remark that Lemma~\ref{lemma-about-approx-of-Fourier} holds (with different constants) for any torus of the form
$T^m = ({\mathbb R}/ a {\mathbb Z})^m$, where $a>0$. Moreover, the lemma holds if
instead of the basis $\{ e^{{\frac {2\pi i} a}  rt} \}$, we  choose
the trigonometric basis consists of products of $\{ \cos({{\frac
{2\pi} a}  r_i t_i})\}$ or $\{\sin({{\frac {2\pi} a}  r_i t_i})\}$
for $i=1,\ldots,m$.}
 \end{remark}
We now turn  to complete the proof of the second part of
Proposition~\ref{Ck-bound-lemma}:
\begin{proof}[{\bf Proposition~\ref{Ck-bound-lemma}, Part 
(ii):}] Let $f \in C_c^{\infty}(W)$. By gluing together the boundary
of the cube $W$ in an appropriate way, we obtain a well defined
smooth
function 
on the torus $T^{2n} = ({\mathbb R}/ 2L {\mathbb Z})^{2n}$, which by
abuse of notation we still denote by $f$.
 We apply
Lemma~\ref{lemma-about-approx-of-Fourier} to the function $f$ (note
the comment regarding the trigonometric basis in
Remark~\ref{rmk-about-fourier-lemma}). We order the trigonometric
basis in Remark~\ref{rmk-about-fourier-lemma} by $\{ e_{k}
\}_{k=1}^{\infty}$. Note that each $e_k$ is a product function with
$\| e_k \|_{\infty} = 1$. Denoting the corresponding Fourier sums of
$f$ by $S_k = \sum_{i=1}^k c_i e_i$. We have $S_k \rightarrow f$ in
the $C^{\infty}$-topology and $ \sum_{k=1}^{\infty} |c_k| \leqslant
A \|f\|_{C^{2n+1}} $ for some $ A = A(n) $. We turn back to the
situation where we consider $f$ defined on $ W $. Take any smooth
cutoff function $\rho : W \rightarrow \mathbb{R} $, which equals $1$
on $ supp(f) $, equals $0$ near the boundary $ \partial W $, and
which has $ \| \rho \|_{\infty} = 1 $ (one can easily find such $
\rho $, since $ supp(f) \subset W $). Then we have $ \rho S_k =
\sum_{i=1}^k c_i \rho e_i \rightarrow \rho f = f $ in $
C_c^\infty(W) $, in the $ C^\infty $ topology as well. Moreover, the
functions $ \{ \rho e_k \}$ are product functions with $ \| \rho e_k
\|_{\infty} \leqslant 1 $. From part (i) or
Proposition~\ref{Ck-bound-lemma}, and
Lemma~\ref{lemma-about-approx-of-Fourier}, it follows that for a
suitable collection $ \mathcal{F} $, one has  $$  \| S_k
\|_{{\mathcal F} , \, max} \leqslant   \sum_{i=1}^k |c_i| \| \rho
e_i \|_{{\mathcal F} , \,  max} \leqslant C \sum_{i=1}^k |c_i|
\leqslant CA \|f\|_{C^{2n+1}} .$$ Hence, from
Remark~\ref{rmk-about-max-norm} we conclude that $$  \| f
\|_{{\mathcal F} , \,  max} \leqslant CA \|f\|_{C^{2n+1}} .$$ The
proof of the second part of the proposition is now complete.

\end{proof}

\subsubsection{Proof of Proposition~\ref{estimate-the-max-norm-of-the-polynomial-part}} \label{subsection-estimate-max-norm-of-taylor}
For any multi-index $ \alpha = (i_1,i_2,...,i_{2n}) $, where $ |
\alpha | \leqslant 2n+1 $, denote $$ g^{\alpha}_{w}(x) = \sum_{v =
(v_1,v_2,...,v_{2n}) \in \Gamma^{\epsilon}_w}  \frac{1}{i_1 ! i_2 !
... i_{2n}!} \frac {\partial f^{|\alpha|}} {\partial x_1^{i_1}
\partial x_2^{i_2} ...
\partial x_{2n}^{i_{2n}} }(v) \Big ( \prod_{j=1}^{2n}
(x_j-v_j)^{i_j} \Big ) {\cal R}^{\epsilon}(x-v)  $$ Note that the
function $ g_w $ is the sum of $ g^{\alpha}_{w} $, for $ \alpha =
(i_1,i_2,...,i_{2n}) $ with $ | \alpha | \leqslant 2n+1 $.
Note moreover that each summand of $ g^{\alpha}_{w} $ is a constant
multiple of the function $$  \varXi_{\alpha} (x-v):= \Big (
\prod_{j=1}^{2n} (x_j-v_j)^{i_j} \Big ) {\cal R}^{\epsilon}(x-v)
,$$ where $$ \varXi_{\alpha}(x) = x_1^{i_1} x_2^{i_2} ...
x_{2n}^{i_{2n}} {\cal R}^{\epsilon}(x) =  \prod_{l=1}^{n}
q_l^{i_{2l-1}}  p_{l}^{i_{2l}} \, r \Bigl (\frac{q_l}{\epsilon}
\Bigr) r \Bigl (\frac{p_l}{\epsilon} \Bigr)$$ We shall need the
following lemma which will be proven in
Subsection~\ref{subsection-proof-of-lemma-about-max-norm-of-taylor}
\begin{lemma} \label{ineq-for-sum-of-translations-lemma}
Let $\xi \in C_c^{\infty} \left ( (-\epsilon,\epsilon)^{2n} \right)$
be a compactly supported smooth function which can be represented as
a product $\xi = \prod_{j=1}^n \xi_j(q_j,p_j)$, where $\xi_j \in
C_c^{\infty} \left ((-\epsilon,\epsilon)^2 \right)$. Then, for every
function $H(x) = \sum_{v  \in \Gamma^{\epsilon}_w} a_v \xi(x-v)$,
where $a_v$ are real coefficients and $\Gamma^{\epsilon}_w$ is the
grid defined in~$(\ref{def-of-the-grid-Gamma})$, one has $$ \| H
\|_{{\mathcal F}, \, max} \leqslant C
\|H\|_{\infty}, \ {\it for \ some \ absolute \ constant \ } C$$ 
\end{lemma}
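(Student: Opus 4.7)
I would prove this by induction on $n$. The base case $n=1$ is immediate: $H \in C_c^\infty(W^2)$, and the $2$-dimensional case of Theorem~\ref{Main-Thm-local-case} applied directly to $H$ gives $\|H\|_{\mathcal F, \max} \leq C\|H\|_\infty$.

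For the inductive step ($n\geq 2$), I would split the coordinates as $x=(x',y)$ with $x' = (q_1,p_1,\ldots,q_{n-1},p_{n-1}) \in W^{2n-2}$ and $y=(q_n,p_n)\in W^2$, and correspondingly $v=(v',w)$ with $w$ in the $2$-dimensional subgrid $\Gamma_n$ in the $(q_n,p_n)$-plane. Writing $\xi = \xi'\cdot\xi_n$ with $\xi' = \prod_{j<n}\xi_j$, the function reorganizes as
$$H(x',y) = \sum_{w\in\Gamma_n} B_w(x')\,\xi_n(y-w), \qquad B_w(x') := \sum_{v'} a_{v',w}\,\xi'(x'-v').$$
Since the translates of $\xi'$ have pairwise disjoint supports, $\|B_w\|_\infty \leq \|H\|_\infty/\|\xi_n\|_\infty$. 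The inductive hypothesis applied to each $B_w \in C_c^\infty(W^{2n-2})$ yields $C^\infty$-approximations in $\mathcal{L}_{\mathcal{F}^{(n-1)}}$ of norm at most $C\|B_w\|_\infty$.

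The crucial step is to combine these with the $2$-dimensional case of Theorem~\ref{Main-Thm-local-case} applied in the $y$-variable. Here one uses block-diagonal Hamiltonian diffeomorphisms of the form $(x',y)\mapsto(\psi(x'),\phi(y))$, which are genuine symplectomorphisms of $W^{2n}$, exactly as in the proof of Proposition~\ref{Ck-bound-lemma}(i). The key observation is that rather than decomposing each individual translate $\xi_n(\cdot-w)$ and incurring a factor of $|\Gamma_n|$, one should apply the $2$D Theorem to the entire $2$-dimensional function $y\mapsto H(x',y)=\sum_w B_w(x')\xi_n(y-w)$ as a whole, whose $L^\infty$-norm is bounded by $\|H\|_\infty$; this absorbs the sum over $w\in\Gamma_n$ into a single $\mathcal{L}_{\mathcal{F}^2}$-approximation of norm $\leq C\|H\|_\infty$. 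The product structure of $\mathcal{F}^{(n)}$ (consisting of products $\prod_j \mathfrak{f}_{l_j}$ of building blocks from $\mathcal{F}^2$) together with the block-diagonal Hamiltonian construction then yields a $C^\infty$-approximation of $H$ by elements of $\mathcal{L}_{\mathcal{F}^{(n)}}$ whose $\ell^1$-norm of coefficients is bounded by the product of the $(n-1)$-dim and $2$D bounds, hence by $C\|H\|_\infty$.

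The main obstacle is that the $2$D decomposition of $y\mapsto H(x',y)$ produced by Theorem~\ref{Main-Thm-local-case} depends on the parameter $x'$, yet we need to assemble it into a globally defined sum of Hamiltonian pullbacks on $W^{2n}$ without incurring growth in the number of terms. Overcoming this obstacle requires careful use of the $C^\infty$-approximation built into the definition of $\|\cdot\|_{\mathcal{F},\max}$, together with the specific product structure of $\mathcal{F}^{(n)}$, so that the Hamiltonian diffeomorphisms in the $y$-factor can be chosen independently of $x'$ while the $x'$-dependence is entirely absorbed into the $(n-1)$-dimensional decomposition of the $B_w$'s.
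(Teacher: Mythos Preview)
Your induction scheme contains a genuine gap at exactly the point you flag as ``the main obstacle,'' and the sketch you give for overcoming it does not work. You want the $y$-side Hamiltonians to be independent of $x'$ so that block-diagonal maps $(x',y)\mapsto(\psi(x'),\phi(y))$ can be used, yet at the same time you want to exploit the bound $\|H(x',\cdot)\|_\infty\leq\|H\|_\infty$ by applying the two-dimensional theorem to $H(x',\cdot)$ \emph{as a whole}. These two requirements are in direct conflict: the decomposition of $\sum_w B_w(x')\xi_n(\cdot-w)$ furnished by Theorem~\ref{Main-Thm-local-case} genuinely depends on the coefficients $B_w(x')$, hence on $x'$. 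If instead you fix an $x'$-independent decomposition of each $\xi_n(\cdot-w)$ separately, you land back at the $|\Gamma_n|$-factor blow-up you were trying to avoid. The phrase ``the $x'$-dependence is entirely absorbed into the $(n-1)$-dimensional decomposition of the $B_w$'s'' does not resolve this: the $(n-1)$-dimensional decompositions of the $B_w$ are separate objects indexed by $w$, and combining them with a single $y$-decomposition still costs a sum over $w$.

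The paper's argument avoids this difficulty by a completely different, combinatorial route. First it proves that any bijection of the grid $\Gamma^\epsilon_w$ can be realised by a compactly supported Hamiltonian diffeomorphism of $W$ (Lemma~\ref{permutation-of-cubes-lemma}), so one may reorder the coefficients $a_v$ monotonically without changing $\|H\|_{\mathcal F,\max}$. Then an Abel summation writes $H$ (after reordering) as a short combination of the partial sums $K_j=\sum_{l\leq j}\xi(\cdot-v_l)$ with total coefficient mass $\leq 3\max_v|a_v|$. The point is that under the lexicographic ordering each $K_j$ is supported on a ``staircase'' region that splits into at most $n$ genuine product functions of $L^\infty$-norm $\|\xi\|_\infty$; Proposition~\ref{Ck-bound-lemma}(i) then gives $\|K_j\|_{\mathcal F,\max}\leq nC\|\xi\|_\infty$ uniformly in $j$. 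No parameter-dependent decomposition is ever needed.
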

Applying Lemma~\ref{ineq-for-sum-of-translations-lemma}, with $\xi =
\varXi_{\alpha}$, to the function $H=g^{\alpha}_w$, we conclude that
\begin{eqnarray*}  \| g^{\alpha}_w \|_{{\mathcal F}, \, max}  & \leqslant & C \| g^{\alpha}_w \|_{\infty} \leqslant \frac{C}{i_1 ! i_2 ! \ldots i_{2n}!} \,    \| \varXi_{\alpha}  \|_{\infty} \, \max_{v \in \Gamma^{\epsilon}_w} \,
 \frac{\partial f^{|\alpha|}}{\partial x_1^{i_1} \partial x_2^{i_2} \ldots \partial x_{2n}^{i_{2n}} }(v)  \\  & \leqslant &
C \,  \| \varXi_{\alpha}  \|_{\infty} \|f \|_{C^{|\alpha|}}
\end{eqnarray*}
Since $\|r \|_{\infty}=1$, and $supp(r) \subset
(-\epsilon,\epsilon)$, it follows that $\|\varXi_{\alpha}
\|_{\infty} \leq
\epsilon^{|\alpha|}$. Thus, we obtain 
$$ \| g^{\alpha}_w \|_{{\mathcal F}, \, max} \leqslant C \, \epsilon^{|\alpha|} \,  \|f \|_{C^{|\alpha|}} , $$
and hence $$ \| g_w \|_{{\mathcal F}, \, max} \leqslant \sum_{ |
\alpha | \leqslant  2n+1 } C \, \epsilon^{|\alpha|} \,  \|f
\|_{C^{|\alpha|}} \leqslant C' \sum_{k=0}^{2n+1} \, \epsilon^{k} \,
\|f \|_{C^{k}} $$ This completes the proof of
Proposition~\ref{estimate-the-max-norm-of-the-polynomial-part}. \qed
\subsubsection{Proof of Lemma~\ref{ineq-for-sum-of-translations-lemma}} \label{subsection-proof-of-lemma-about-max-norm-of-taylor}

Note first that the grid $ \Gamma^{\epsilon}_w = \epsilon w + 4
\epsilon \mathbb{Z}^{2n} \cap ( -L + 3\epsilon, L - 3\epsilon)^{2n} $  admits a decomposition into the product $
\Gamma^{\epsilon}_w = \prod_{i=1}^{n} \gamma_i $,
where $ \gamma_i = \gamma^{\epsilon,w}_{i}
\subset (-L+3\epsilon,L-3\epsilon)^2 \subset (-L,L)^2 $ are grids on the plane.
Next, let $H$ be as in
Lemma~\ref{ineq-for-sum-of-translations-lemma}. Given a bijection $
\tau : \Gamma^{\epsilon}_w \rightarrow \Gamma^{\epsilon}_w $, we
denote
$$H_{\tau}(x) = \sum_{v  \in \Gamma^{\epsilon}_w} a_{\tau(v)}
\xi(x-v) $$

\begin{lemma} \label{permutation-of-cubes-lemma}
For any bijection $ \tau : \Gamma^{\epsilon}_w \rightarrow
\Gamma^{\epsilon}_w $, one has 
$ \| H_{\tau} \|_{{\mathcal F}, \, max} = \| H \|_{{\mathcal F}, \,
max} $.
\end{lemma}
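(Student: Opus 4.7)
The plan is to realize $\tau$ by a single Hamiltonian diffeomorphism. Specifically, I would construct $\Phi \in \mathrm{Ham}_c(W,\omega)$ satisfying $H_\tau = \Phi^* H$, and conclude immediately from the $\mathrm{Ham}_c(W,\omega)$-invariance of $\|\cdot\|_{{\mathcal F}, \, max}$ recorded in Remark~\ref{rmk-about-max-norm}. The structure of $H$ forces $\Phi$ to coincide, on a neighborhood of each cube $v + \mathrm{supp}\,\xi$ with $v \in \Gamma^\epsilon_w$, with the rigid translation $x \mapsto x + (\tau(v) - v)$: if this holds, then for $x$ in such a neighborhood only the $u = \tau(v)$ summand of $H$ contributes, and
\[
 H(\Phi(x)) = a_{\tau(v)}\,\xi(\Phi(x) - \tau(v)) = a_{\tau(v)}\,\xi(x - v),
\]
which matches the value of $H_\tau$ near $v$; outside the union of these neighborhoods both sides vanish.

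I would then reduce to the case of a transposition $(v_1\,v_2)$ by decomposing any bijection of $\Gamma^\epsilon_w$ into transpositions and composing the corresponding Hamiltonian diffeomorphisms. For a transposition the task becomes to produce $\Phi \in \mathrm{Ham}_c(W,\omega)$ which acts as translation by $v_2 - v_1$ on a neighborhood of the cube at $v_1$, as the reverse translation on a neighborhood of the cube at $v_2$, and as the identity on neighborhoods of the remaining support cubes. Since the supports of the individual summands of $H$ are pairwise disjoint, sit at positive distance from one another and from $\partial W$, and have connected complement in $W$ (the ambient dimension is $2n \geq 2$), one can pick two disjoint smooth arcs in this complement, one joining $v_1$ to $v_2$ and the other joining $v_2$ to $v_1$, and realize $\Phi$ as the time-one map of a Hamiltonian isotopy compactly supported in a small tubular neighborhood of their union, sliding the two distinguished cubes rigidly along these arcs to the swapped positions.

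The main obstacle is to guarantee that $\Phi$ restricts to an \emph{exact} rigid translation on a neighborhood of each of the two distinguished cubes, rather than merely to some symplectomorphism close to a translation, as only true translations preserve $\xi$ pointwise. The key observation is that a translation of $\mathbb{R}^{2n}$ by a constant vector $c$ is the time-one map of the Hamiltonian flow of the linear function $x \mapsto \omega(c, x)$; multiplying this Hamiltonian by a bump function that equals $1$ on a prescribed neighborhood of a cube and vanishes outside a slightly larger set produces a compactly supported Hamiltonian whose time-one map is literally the translation on that neighborhood. Concatenating two such cutoff translations that push the two cubes along the chosen arcs to their swapped positions, and patching the flows together by a standard symplectic interpolation in the intermediate region, yields the required $\Phi$, and then $\mathrm{Ham}_c(W,\omega)$-invariance gives $\|H_\tau\|_{{\mathcal F}, \, max} = \|\Phi^* H\|_{{\mathcal F}, \, max} = \|H\|_{{\mathcal F}, \, max}$.
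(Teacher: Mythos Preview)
Your proposal is correct and shares the paper's overall strategy: reduce to transpositions and realize each one by a single $\Phi\in\mathrm{Ham}_c(W,\omega)$ that rigidly swaps the two distinguished support cubes while fixing the others, so that $H_\tau=\Phi^*H$. The difference is in the construction of $\Phi$. The paper reduces further, to transpositions of \emph{neighboring} grid points $v',v''$ with $|v'-v''|=4\epsilon$; such points differ in a single coordinate pair $(q_k,p_k)$, so the swap can be performed by an explicit two-dimensional Hamiltonian isotopy supported in the rectangle $S=Q'\cup Q''$ spanned by the two adjacent squares in the $(q_k,p_k)$-plane, extended to $W$ by multiplying with cutoff functions in the remaining coordinates. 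The grid spacing then guarantees for free that the support of the isotopy misses every other cube. Your route through arbitrary transpositions and arcs in the complement is more conceptual and works as well; the only point needing a little extra care is the two-dimensional case, where ``two disjoint arcs joining $v_1$ to $v_2$ and $v_2$ to $v_1$'' really means an embedded circle through both points, and one must check that the sliding cubes never collide during the isotopy (alternatively, a three-step move through an auxiliary parking position avoids this). The paper's neighboring-transposition trick trades this topological bookkeeping for a completely explicit local model.
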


\begin{proof}[{\bf Proof of Lemma~\ref{permutation-of-cubes-lemma}}]
It is not hard to check that every bijection 
$ \tau : \Gamma^{\epsilon}_w \rightarrow \Gamma^{\epsilon}_w $, can
be written as a product of transpositions that interchange two
neighboring points of $ \Gamma^{\epsilon}_w $ (here, by neighboring
points we mean $
v',v'' \in \Gamma^{\epsilon}_w $, such that 
$ |v'-v''| = 4\epsilon $). Therefore it is enough to prove the lemma
for the case of such a transposition.

Let $ v'=(z_1',...,z_{n}'), v''=(z_1'',...,z_{n}'') \in \Gamma^{\epsilon}_w $ be a pair of neighboring points, where $ z_i' , z_i'' \in \gamma_{i} $ for $ i=1,2,...,n $. There exists $ 1 \leqslant k \leqslant n $, such that $ z_i' = z_i'' $ for $ i \neq k $, and moreover
$z_k'' = z_k' \pm 4\epsilon $ or $ z_k'' = z_k' \pm 4\epsilon i $. The union of the neighboring squares $ Q' := z_k' + [-\epsilon,\epsilon]^2 $, and  $ Q'' := z_k'' + [-\epsilon,\epsilon]^2 $ is a rectangle $ S = Q' \cup Q'' $. Since the support $ supp( \xi_k ) \subset (-\epsilon,\epsilon)^2 $, there exists $ 0 < \epsilon_1 < \epsilon $, such that $ supp( \xi_k ) \subset [-\epsilon_1,\epsilon_1]^2 $. Looking on $ Q_1'= z_k' + [-\epsilon_1,\epsilon_1]^2, Q_1''= z_k'' + [-\epsilon_1,\epsilon_1]^2 \subset int(S) $, one can clearly move $ Q_1' $ to $ Q_2' $ and $ Q_2' $ to $ Q_1' $ simultaneously, using affine translations, such that at every moment the images of $ Q_1', Q_2' $ will not intersect, and are contained in $ int(S) $. Moreover, this can be done by a smooth Hamiltonian isotopy $ \Phi_{K_k}^t $, supported in $ S $, where $ K_k(t,z_k) : [0,1] \times W_k \rightarrow \mathbb{R} $ is the Hamiltonian that generates this isotopy, and such that we have $ supp(K_k(t,\cdot)) \subset int(S) $ for all $ t \in [0,1] $. For any $ j \neq k $, $ 1 \leqslant j \leqslant n $ consider a smooth function $ K_j(z_j): W_j \rightarrow \mathbb{R} $ such that $ K_j(z_j) = 1 $ for $ z_j \in z_j' + [-\epsilon,\epsilon]^2 $ and $ K_j(z_j) = 0 $ for $ z_j \in W_j \setminus ( z_j' + [-2\epsilon,2\epsilon]^2 ) $.
Now define a Hamiltonian $ K : [0,1] \times W \rightarrow \mathbb{R} $ by
$$ K(t;z_1,z_2,...,z_n) = K_{k}(t,z_k)
\prod_{\substack{
1 \leqslant j \leqslant n\\
j \neq k}}
K_{j}(z_j)
$$
Note that $ K(t;z_1,z_2,...,z_n) = K_{k}(t,z_k) $ for $$ z = (z_1,...,z_n) \in U_1 := \prod_{j=1}^{k-1} (z_j' + [-\epsilon,\epsilon]^2 ) \times S \times \prod_{j=k+1}^{n} (z_j' + [-\epsilon,\epsilon]^2 ).$$ Moreover, $ U_1 $ is invariant under the flow $ \Phi_K^t $, and  $$ \Phi_K^t (z_1,...,z_n) = (z_1,...,z_{k-1},\Phi_{K_k}^t(z_k),z_{k+1},...,z_n) ,$$ for any $ z = (z_1,...,z_n) \in U_1 $. In particular, $ \Phi_K^1 (z) = z + v'' - v' $ for $ z \in v' + [-\epsilon,\epsilon]^{2n} $, and $ \Phi_K^1 (z) = z + v' - v'' $ for $ z \in v'' + [-\epsilon,\epsilon]^{2n} $. Furthermore, for $$ U_2 := \prod_{j=1}^{k-1} (z_j' + [-2\epsilon,2\epsilon]^2 ) \times S \times \prod_{j=k+1}^{n} (z_j' + [-2\epsilon,2\epsilon]^2 ) $$ we have that $ supp(K(t,\cdot)) \subset U_2 $ for all $ t \in [0,1] $. Therefore, since $ (v + [-\epsilon,\epsilon]^{2n}) \cap U_2 = \emptyset $ for all $ v \in \Gamma^{\epsilon}_w \setminus \{ v',v'' \} $, we conclude that $ \Phi_K^1 (z) = z $ for $ z \in v + [-\epsilon,\epsilon]^{2n} $ for any $ v \in \Gamma^{\epsilon}_w \setminus \{ v',v'' \} $. Hence if $ \tau : \Gamma^{\epsilon}_w \rightarrow \Gamma^{\epsilon}_w $ is a transposition that interchanges $ v' $ with $ v'' $, we conclude that $ H_\tau = (\Phi^1_K)^* H $. Therefore we conclude $$ \| H_{\tau} \|_{{\mathcal F}, \, max} = \| H \|_{{\mathcal F}, \, max} .$$
\end{proof}

\begin{proof}[{\bf Proof of Lemma~\ref{ineq-for-sum-of-translations-lemma}}]
Consider the decomposition $ \Gamma^{\epsilon}_w = \prod_{i=1}^{n}
\gamma_i $, and write each $ \gamma_i $ explicitly as $ \gamma_i = \{
z_{i,1}, ... , z_{i,N_{i}} \} \subset (-L,L)^2 $. We order each set
$ \gamma_i $ by setting $  z_{i,1} < z_{i,2} < ... < z_{i,N_{i}} $,
for each $ i $, and consider the lexicographic order $ \prec $ on $
\Gamma^{\epsilon}_w $ induced by these orders. We can arrange all the
elements of $ \Gamma^{\epsilon}_w $ by increasing order $$ v_1
\prec v_2 \prec ... \prec v_N ,$$ where $ N = \prod_{i=1}^{n} N_i$.
Take a bijection $ \tau : \Gamma^{\epsilon}_w \rightarrow
\Gamma^{\epsilon}_w $ such that
$$ a_{\tau(v'')} \leqslant a_{\tau(v')} \ {\rm  if \ and \ only \ if \ } v' \preceq v'', \ {\rm where} \  v',v'' \in
\Gamma^{\epsilon}_w, $$
and rewrite $H_{\tau}(x)=\sum_{v  \in
\Gamma^{\epsilon}_w} a_{\tau(v)} \xi(x-v)$ as
\begin{equation} H_{\tau}(x) = 
\sum_{j=1}^{N} b_{j}
\xi(x-v_j), \ {\rm and} \  b_1 \leqslant b_2 \leqslant ... \leqslant b_N  \end{equation}
By Lemma~\ref{permutation-of-cubes-lemma}, one has $
\| H_{\tau} \|_{{\mathcal F}, \, max} = \| H \|_{{\mathcal F}, \,
max} $.
Next, write \begin{equation} H_{\tau}(x) = b_N K_{N}(x) + \sum_{j=1}^{N-1}
(b_{j} - b_{j+1}) K_{j}(x), \end{equation} where $ K_{j}(x) = \sum_{l=1}^{j}
 \xi(x-v_j) $. Also set $ K_0(x) = 0 $.

\begin{equation}
\begin{split}
 \| H_{\tau}
\|_{{\mathcal F}, \, max}  &  \leqslant
|b_N| \| K_{N}(x) \|_{{\mathcal F}, \, max} + \sum_{j=1}^{N-1}
|b_{j} - b_{j+1}| \| K_{j} \|_{{\mathcal F}, \, max} \\ & \leqslant
|b_N| \| K_{N}(x) \|_{{\mathcal F}, \, max} + \sum_{j=0}^{N-1} (b_{j+1} - b_{j}) \max_{1 \leqslant j \leqslant N}
\| K_j \|_{{\mathcal F}, \, max} \\[0.04in]
& =  |b_N| \| K_{N}(x) \|_{{\mathcal F}, \, max} + (b_N - b_1) \max_{1 \leqslant j \leqslant N} \| K_j \|_{{\mathcal F}, \, max} \\
& \leqslant 3 \, \Bigl ( \max_{v \in \Gamma^{\epsilon}_w} |a_{v}| \Bigr)
\max_{1 \leqslant j \leqslant N} \| K_j \|_{{\mathcal F}, \, max}
\end{split}
\end{equation}

Next, consider some $ K_j $, where $ 1 \leqslant j \leqslant N $. There
exist a unique sequence $$ j_0 = 0 \leqslant j_1 \leqslant j_2
\leqslant ... \leqslant j_{n-1} \leqslant j_n = j ,$$ such that for
any $ 1 \leqslant m \leqslant n $ we have $ \prod_{l=m+1}^{n} N_{l}
\, | \, j_{m}-j_{m-1} $, and we have $$ k_l := \frac{j_{m}-j_{m-1}}{
\prod_{l=m+1}^{n} N_l } < N_m .$$ Here we mean $ \prod_{l=n+1}^{n}
N_{l} = 1 $. Take any $ 1 \leqslant m \leqslant n $. Then provided $
j_{m-1} < j_{m} $, we can write $$ \xi^m(z) := K_{j_m} - K_{j_{m-1}}
= \prod_{l=1}^{n} \xi^m_l (z_l), $$ where we have $$ \xi^m_l(z_l) =
\xi_l(z_l - z_{l,k_l}) \text{ , for } l=1,...,m-1 , $$
$$ \xi^m_m(z_m) = \sum_{i_m = 1}^{k_m} \xi_m(z_m - z_{m,i_m}) ,$$
$$  \xi^m_l(z_l) = \sum_{i_l = 1}^{N_l} \xi_l(z_l - z_{l,i_l}) \text{ , for } l=m+1,...,n .$$
Moreover, for any $ 1 \leqslant m \leqslant  n $ we have $$ \| \xi^m
\|_{\infty} = \prod_{l=1}^{n} \| \xi^m_l \|_{\infty} =
\prod_{l=1}^{n} \| \xi_l \|_{\infty} = \| \xi \|_{\infty} .$$ From
this, and from Proposition~\ref{Ck-bound-lemma}~(i), we conclude
that $$ \|  \xi^m \|_{{\mathcal F}, \, max} \leqslant C \|  \xi^m
\|_{\infty} =  C \|  \xi \|_{\infty} ,$$ for some $ C=C(n) $. We
have $$ K_j = \sum_{m=1}^{n} \xi^m ,$$ hence $$ \| K_j \|_{{\mathcal
F}, \, max} \leqslant \sum_{m=1}^{n} \|  \xi^m \|_{{\mathcal F}, \,
max} \leqslant nC \|  \xi \|_{\infty} ,$$ and this holds for any $ 1
\leqslant j \leqslant N $. Therefore we conclude
$$ \| H \|_{{\mathcal F}, \, max} = \| H_{\tau} \|_{{\mathcal F}, \, max}
\leqslant 3 \left( \max_{v \in \Gamma^{\epsilon}_w} |a_{v}| \right)
\max_{1 \leqslant j \leqslant N} \| K_j \|_{{\mathcal F}, \, max}
$$ $$ \leqslant 3nC \left( \max_{v  \in \Gamma^{\epsilon}_w} |a_{v}| \right) \| \xi \|_{\infty} = 3nC \| H \|_{\infty} .$$
The proof of the lemma is now complete.

\end{proof}

\section{Proof of Theorem~\ref{Main-thm}} \label {section-proof-of-thm}

The proof of Theorem~\ref{Main-thm} follows from
Theorem~\ref{Main-Thm-local-case} by a standard partition of unity
argument. For the sake of completeness, we provide the details
below.

As explained in Section~\ref{section-outline}, it is enough to prove
Theorem~\ref{Main-thm} for Ham$(M,\omega)$-invariant pseudo norms on
$C^{\infty}(M)$. Indeed, any Ham$(M,\omega)$-invariant pseudo norm
$\| \cdot \|$ on ${\mathcal A}$ that is continuous in the
$C^{\infty}$-topology, can be naturally extended to a Ham$(M,\omega)$-invariant
pseudo-norm $\| \cdot \|'$ on $C^{\infty}(M)$, which is again continuous in the
$C^{\infty}$-topology, by setting
$$\| f \|' = \| f- M_f \|, \ {\rm where \ } M_f = {\textstyle {\frac
1 {Vol(M)}} \int_M f \omega^n}$$

Consider a Darboux chart $ i:  U \hookrightarrow M $, where $ U
\subset (\mathbb{R}^{2n},\omega_{std}) $ is an open set. Without
loss of generality we assume that the origin of $ \mathbb{R}^{2n} $
lies inside $ U $. Choose some $ L
> 0 $, such that $ W=(-L,L)^{2n} \subset U $. Since $ i(W) \subset
M $, we have a natural embedding $ C^{\infty}_{c}(i(W))
\hookrightarrow C^{\infty}(M) $, and therefore any
Ham$(M,\omega)$-invariant pseudo norm $ \| \cdot \| $ on $
C^{\infty}(M) $ restricts to $ C^{\infty}_{c}(i(W)) $. From
Lemma~\ref{lemma-about-max-norm} and
Theorem~\ref{Main-Thm-local-case}, we conclude that (when the norm is continuous in the $C^{\infty}$-topology) there exists a
constant $ C
> 0 $ such that $$ \| f \| \leqslant C \| f \|_{\infty}, \ {\rm  for  \ every \ function
} \ f \in C^{\infty}_{c}(i(W))$$
Next, for any point $ x \in M $
there exists an open neighborhood $ V_x \subset M $, and a smooth
Hamiltonian diffeomorphism $ \Phi_{x} \in {\rm Ham}(M,\omega) $,
such that $ \Phi_{x}(V_x) \subset W $. Consider the open covering $
\bigcup_{x \in M} V_x = M $. The compactness of $M$ allows us to
pass to a finite subcover $ \bigcup_{i=1}^{N} V_{x_i} = M $.
Moreover, one can find a partition of unity $ \{
\rho_1,\rho_2,...,\rho_N\}$, such that for every $ i=1,2,...,N $,
$\rho_i: M \rightarrow \mathbb{R}$ is a smooth positive function
supported in $V_{x_i}$, and
$$ \rho_1 + \rho_2 + ... + \rho_N = \Id_{M} $$ 
Finally, let $ f \in C^{\infty}(M) $, and consider the decomposition
$$ f = \rho_1 f+ \rho_2 f+ ... + \rho_N f$$ Since $\| \cdot \|$ is a Ham$(M,\omega)$-invariant norm, it follows that $$ \| f \|
\leqslant \sum_{i=1}^{N} \| \rho_i f\| = \sum_{i=1}^{N} \|
(\Phi_{x_i}^{-1})^* (\rho_i f) \|$$ Moreover, it follows from the
above that $ supp \left ( (\Phi_{x_i}^{-1})^* (\rho_i f) \right)
\subset W $, and hence
$$ \| (\Phi_{x_i}^{-1})^* (\rho_i f) \| \leqslant C \|
(\Phi_{x_i}^{-1})^* (\rho_i f) \|_{\infty} = C \| \rho_i f
\|_{\infty} \leqslant C \| f \|_{\infty} .$$ Therefore we conclude
that $$ \| f \| \leqslant C' \|f\|_{\infty},$$ where $C'=NC$. The
proof of the theorem is now complete.

\section{Appendix}

Here we prove the claim mentioned in Remark~\ref{Rmk-about-continuity}. More precisely:

\begin{proposition}
 Let $ M $ be a closed symplectic manifold, and let $ \| \cdot \| $ be a norm on the Lie algebra ${\cal A}$ of $ Ham(M,\omega) $. Then, smooth paths $ [0,1] \rightarrow Ham(M,\omega) $ have finite length if and only if the norm $ \| \cdot \| $ is continuous in the $ C^\infty $-topology.
\end{proposition}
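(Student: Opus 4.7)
My plan is to prove the two implications separately.

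The forward direction is quick. If $\alpha:[0,1]\to\mathrm{Ham}(M,\omega)$ is smooth, then the normalized generating Hamiltonian $H_t\in\mathcal{A}$ depends smoothly on $t$ in the $C^\infty(M)$-topology; hence if $\|\cdot\|$ is continuous in this topology, then $t\mapsto\|H_t\|$ is a continuous function on the compact interval $[0,1]$, and so the length $\int_0^1\|H_t\|\,dt$ is finite.

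For the converse I will argue by contrapositive, constructing from a failure of $C^\infty$-continuity a smooth Hamiltonian path of infinite length. Since a seminorm is $C^\infty$-continuous if and only if it is $C^\infty$-continuous at the origin, the failure of continuity produces $f_n\in\mathcal{A}$ with $f_n\to 0$ in $C^\infty$ but $\|f_n\|\geq\varepsilon_0>0$ for all $n$. The idea is to insert rescaled copies of these functions into a Hamiltonian $H(t,x)$ supported on a sequence of shrinking disjoint time intervals that accumulate at $t=1$. Concretely I plan to extract by a diagonal argument a subsequence $\{f_{n_k}\}$ with $C^\infty$-decay faster than any prescribed rate, set $g_k := k\, f_{n_k}$, pick disjoint intervals $I_k\subset(0,1)$ of length $\delta_k = 1/k^2$ accumulating at $1$, and choose nonnegative smooth bumps $\chi_k$ supported in $I_k$ with $\chi_k\equiv 1$ on the middle half of $I_k$. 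Setting
\[
  H(t,x) \,:=\, \sum_{k\geq 1}\chi_k(t)\,g_k(x),
\]
the length computation, using disjointness of supports together with homogeneity of $\|\cdot\|$, yields
\[
  \int_0^1\|H_t\|\,dt \;=\; \sum_k \|g_k\|\int_{I_k}\chi_k(t)\,dt \;\geq\; \sum_k k\varepsilon_0\cdot\frac{\delta_k}{2} \;=\; \sum_k \frac{\varepsilon_0}{2k} \;=\;+\infty,
\]
while $\sum_k\delta_k<\infty$ ensures the intervals comfortably fit inside $[0,1]$. Note also that each $g_k$ inherits zero mean from $f_{n_k}\in\mathcal{A}$, so $H_t\in\mathcal{A}$ for every $t$.

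The main technical obstacle is to ensure that $H$ is genuinely smooth on $[0,1]\times M$, in particular at the accumulation point $t=1$ where it must tend to zero together with all its derivatives. On $I_k$ the estimate $|\partial_t^a\partial_x^\beta H|\leq C_a\delta_k^{-a}\cdot k \cdot \|f_{n_k}\|_{C^{|\beta|}}$ holds, so I need the right-hand side to go to zero as $k\to\infty$ for every fixed $a$ and $\beta$. This decay is achieved by the diagonal extraction of $\{f_{n_k}\}$: since $f_n\to 0$ in every $C^m$ separately, I may choose $n_k$ so that $\|f_{n_k}\|_{C^k}$ decays faster than any polynomial in $k$, which kills the polynomial factor $\delta_k^{-a}\cdot k$ for all fixed $a$. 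Once $H$ is smooth on $[0,1]\times M$, the Hamiltonian flow $t\mapsto\phi_H^t$ is a smooth path in $\mathrm{Ham}(M,\omega)$ of infinite length, completing the contrapositive.
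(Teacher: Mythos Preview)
Your proposal is correct and follows essentially the same strategy as the paper's proof: for the nontrivial direction you build a time-dependent Hamiltonian by placing bump-in-time multiples of carefully chosen functions $H_k$ on a sequence of disjoint intervals accumulating at $t=1$, arranging the $C^k$-norms of the spatial pieces to decay fast enough to guarantee smoothness at $t=1$ while their $\|\cdot\|$-norms stay large enough to make the length integral diverge. The only cosmetic difference is that the paper states the two requirements on the sequence abstractly (namely $\|H_k\|\geq (b_k-a_k)^{-1}$ and $\|H_k\|_{C^k}\leq (b_k-a_k)^k$) and then remarks that discontinuity of $\|\cdot\|$ in $C^\infty$ furnishes such a sequence, whereas you spell out the diagonal extraction from a sequence $f_n\to 0$ with $\|f_n\|\geq\varepsilon_0$ and the rescaling $g_k=k f_{n_k}$ explicitly.
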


\begin{proof}
 The ``if" part of the statement is clear. Let us show the ``only if" part.
 Throughout, we equip $M$ with a Riemmanian metric, and denote
$ \| \cdot \|_{\infty} = \| \cdot \|_{C^0} \leqslant \| \cdot \|_{C^1} \leqslant \| \cdot \|_{C^2} \leqslant ... $
the corresponding $ C^0,C^1,C^2,... $-norms on $ C^{\infty}(M) $.

Let $\| \cdot \|$ be an invariant pseudo-norm on  $C^{\infty}(M)$ 
which is not continuous in the $ C^\infty $-topology. Consider 
two sequences $ \{a_k \}, \{b_k \}$ in the interval $[0,1] $, such that $$ 0 < a_1 < b_1 < a_2 < b_2 < ... < 1 $$
Next, 
let $ c : [0,1] \rightarrow [0,1] $ be a smooth function such that $ c(t) = 0 $ for $ t \in [0,\frac{1}{4}] \cup [\frac{3}{4},1] $, and $ c(t) = 1 $ for $ t \in [\frac{1}{3},\frac{2}{3}] $. For a sequence of smooth functions $ H_k : M \rightarrow \mathbb{R} $, we define a function
$ H: M \times [0,1] \rightarrow \mathbb{R} $ in the following way:
\begin{equation} \label{append-def-of-H}
H(x,t) = \left\{
\begin{array}{lll}
0 & \text{for } t \in [0,a_1] \cup [b_1,a_2] \cup [b_2,a_3] \cup ...,\\[0.1in]
c(\frac{t-a_k}{b_k-a_k}) H_k(x)  & \text{for } t \in [a_k,b_k], \\[0.1in]
0 & \text{for } t=1.
\end{array} \right.
\end{equation}
Note that $ H $ is smooth on $ M \times [0,1) $.
We next show that for a suitable choice of a sequence $ H_k \in C^\infty(M) $, one has $ H(x,t) \in C^\infty(M \times [0,1]) $, and moreover \begin{equation} \label{estimate-for-infinite-length} \int_{0}^{1} \| H(\cdot,t) \| dt = +\infty \end{equation}
Thus, the Hamiltonian flow of $H$ has infinite length with respect to the Finsler
metric $d_{\| \cdot \|}$. Indeed, note that $$ \int_{0}^{1} \| H(\cdot,t) \| dt =
\sum_{k=1}^{\infty} (b_k-a_k) \left( \int_{0}^{1} |c(t)| dt \right) \|H_k\| \geqslant \frac{1}{3} \sum_{k=1}^{\infty} (b_k-a_k) \|H_k\| .$$ Hence, for the estimate~$(\ref{estimate-for-infinite-length}$),
it is enough to choose $H_k$ such that $ \|H_k\| \geqslant \frac{1}{b_k-a_k} $.
Moreover, to ensure that $ H(x,t) $ is smooth in $ M \times [0,1] $,
it is enough to have \begin{equation} \label{estimate-for-infinite-length1} \lim_{t \rightarrow 1} \| \frac{\partial^j}{\partial t^j} H(t,\cdot) \|_{C^m} = 0 , \ \ {\rm for \ any \ } j,m \geqslant 0 \end{equation}
More precisely, assume that $ t \in (a_k,b_k) $. Note that in that case $$ \| \frac{\partial^j}{\partial t^j} H(t,\cdot) \|_{C^m} = \left( \frac{1}{b_k-a_k} \right)^{j} \Bigl | c^{(j)}(\frac{t-a_k}{b_k-a_k}) \Bigr | \|H_k\|_{C^m} \leqslant \left( \frac{1}{b_k-a_k} \right)^{j} \|c\|_{C^j} \|H_k\|_{C^m} .$$ Therefore, to show~$(\ref{estimate-for-infinite-length1})$ it is enough to choose $H_k$ such that
$$ \lim_{k \rightarrow \infty} \left( \frac{1}{b_k-a_k} \right)^{j} \|H_k\|_{C^m} = 0, \ \ {\rm for \ any \ } j,m \geqslant 0 $$
In particular, any sequence 
$H_k \in C^{\infty}(M)$, that for every $k \geqslant 1$ satisfy
\begin{equation} \label{App-some-cond-on-Hk}
\left\{
\begin{array}{ll}
\|H_k\| \geqslant \frac{1}{b_k-a_k},\\[0.1in]
 \| H_k \|_{C^k} \leqslant (b_k-a_k)^k,
\end{array} \right.
\end{equation}
would give rise (via definition~$(\ref{append-def-of-H}$))  to a smooth function $H:  M \times [0,1] \rightarrow {\mathbb R}$, such that $ \int_{0}^{1} \| H(\cdot,t) \| dt = +\infty $.

 Since the norm $ \| \cdot \| $ is assumed to be non-continuous in the $ C^\infty $-topology,
one can always find a sequence $\{H_k\}$ which satisfy~$(\ref{App-some-cond-on-Hk})$.

\end{proof}

\bigskip
\noindent Lev Buhovsky\\
The Mathematical Sciences Research Institute, Berkeley, CA 94720-5070 \\
{\it e-mail}: levbuh@gmail.com
\bigskip

\noindent Yaron Ostrover\\
School of Mathematics, Institute for Advanced Study, Princeton NJ 08540\\
{\it e-mail}: ostrover@math.ias.edu

\end{document}